\providecommand{\drfi}{draft}
\tikzstyle{vertex}=[circle, inner sep=0pt, minimum size=4pt, line width=8pt]
\newcommand{\C}{\mathcal{C}}
\newcommand{\D}{\mathcal{D}}
\newcommand{\ModR}{\mathsf{Mod}_R}
\newcommand{\ModRN}{\mathsf{Mod}_R^{\mathbb{N}}}
\newcommand{\Ch}{\mathsf{Ch}} 
\newcommand{\Top}{\mathsf{Top}}
\newcommand{\Set}{\mathsf{Set}}
\newcommand{\DiGraph}{\mathsf{DiGraph}}
\newcommand{\Cof}{\mathsf{DiGraph}_2}
\newcommand{\CofPO}{\mathsf{DiGraph}_2^{\mathsf{PO}}}
\newcommand{\Gen}{\mathsf{Gen}}
\newcommand{\W}{(X - A)_1}
\newcommand{\tOmega}{\widehat{\Omega}}
\newcommand{\tA}{\widehat{A}}
\newcommand{\tOmegaOne}{\widehat{\Omega}^1}
\newcommand{\tAOne}{\widehat{A}^1}
\newcommand{\pushout}{\arrow [dr, phantom, "\ulcorner" very near end]}
\newcommand{\pullback}{\arrow [dr, phantom, "\lrcorner" very near start]}
\newcommand{\id}[1][]{\mathrm{id}_{#1}}
\newcommand{\gtimes}{\mathbin{\square}}
\newcommand{\bd}{\partial}
\newcommand{\op}{\mathsf{op}}
\newcommand{\inj}{\mathsf{inj}}
\newcommand{\proj}{\mathsf{proj}}
  \newcommand{\weto}{\xrightarrow{\sim}}
  \newcommand{\cto}{\rightarrowtail}
  \newcommand{\into}{\hookrightarrow}
\newcommand{\ie}{i.e.,}
\newcommand{\from}{\colon}
\newtheorem{theorem}{Theorem}[section] 
\newtheorem{corollary}[theorem]{Corollary}
\newtheorem{lemma}[theorem]{Lemma}
\newtheorem{proposition}[theorem]{Proposition}
\theoremstyle{definition}
\newtheorem{definition}[theorem]{Definition}
\theoremstyle{remark}
\newtheorem{remark}[theorem]{Remark}
\newtheorem{example}[theorem]{Example}
\declaretheorem[style=plain,numbered=no,name=Theorem]{theorem*}
\title{Cofibration category of digraphs for path homology}
\author{Daniel Carranza \and Brandon Doherty \and Krzysztof Kapulkin \and Morgan Opie \and Maru Sarazola \and Liang Ze Wong}
\date{\today}
\begin{document}

\maketitle

\begin{abstract}
    We prove that the category of directed graphs and graph maps carries a cofibration category structure in which the weak equivalences are the graph maps inducing isomorphisms on path homology.
\end{abstract}


\section*{Introduction}

Homology theories, among other homotopy-theoretic invariants, play an important role in graph theory.
Examples of such homology theories of graphs (cf.~\cite[\S2.4]{barcelo-greene-jarraj-welker:discrete-path}) include the clique homology, which is the homology of the clique complex associated to a graph; CW-homology, i.e., the homology of the graph viewed as a $1$-dimensional CW-complex; and cubical homology, which is the homology of the $1$-coskeletal cubical set associated to a graph \cite{barcelo-capraro-white}.
Path homology, introduced by Grigor'yan, Lin, Muranov, and Yau \cite{grigor'yan-lin-muranov-yau:unpublished}, is yet another such invariant, however it is fundamentally an invariant of \emph{directed} graphs, or digraphs.
It is most closely related to magnitude homology \cite{hepworth-willerton}, as shown recently by Asao \cite{asao:magnitude-n-path}.

Path homology has seen significant development over the last 10 years.
On the foundational side, this includes the development of the corresponding homotopy theory \cite{grigor'yan-lin-muranov-yau:homotopy}, which in turn allows for the statement and proof of the Eilenberg--Steenrod axioms \cite{grigor'yan-jimenez-muranov-yau}.
On the computational side, a K{\"u}nneth-style theorem was proven in \cite{grigor'yan-muranov-yau}.
Finally, these techniques found applications both within mathematics, e.g., a new proof of the classical Sperner Lemma \cite[\S5]{grigor'yan-lin-muranov-yau:homotopy}, and outside, e.g., in directed network analysis \cite{chowdhury-memoli,chowdhury}.

Since its introduction, path homology has been vastly generalized.
First, from digraphs to path complexes \cite{grigor'yan-lin-muranov-yau}, which are combinatorial objects similar to, yet more general than, simplicial complexes.
In particular, both digraphs and simplicial complexes are canonically examples of path complexes and path homology of path complexes specializes both to path homology of digraphs and to simplicial homology of simplicial complexes.
The second generalization \cite{ivanov-pavutniskiy} was to the category of path sets, a presheaf category similar to that of simplicial sets.

The goal of the present paper is to investigate path homology using tools from abstract homotopy theory, in particular, the framework of cofibration categories.
Our main theorem is:

\begin{theorem*}[cf.~\cref{cofib-cat}]
  The category of directed graphs carries a cofibration category structure in which the weak equivalences are the graph maps inducing isomorphisms on path homology groups.
\end{theorem*}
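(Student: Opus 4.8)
The plan is to verify directly the axioms of a cofibration category for $\DiGraph$: it has an initial object and every object is cofibrant; pushouts along cofibrations exist, and cofibrations and acyclic cofibrations are stable under pushout; the weak equivalences satisfy two-out-of-six; and every graph map factors as a cofibration followed by a weak equivalence. Several of these are essentially formal. The category $\DiGraph$ is cocomplete — colimits are computed on vertex sets, with the edge relation generated by the images — so it has an initial object (the empty digraph) and all pushouts, and since the cofibrations defined below will include every map out of the empty digraph, every object is cofibrant. Likewise, $\hpath$ is the homology of the functor $\Omega_\bullet \from \DiGraph \to \Ch_R$ into chain complexes of $R$-modules that assigns to a digraph its complex of $\partial$-invariant allowed paths, and a graph map is a weak equivalence exactly when $\Omega_\bullet$ sends it to a quasi-isomorphism; since the class of quasi-isomorphisms in $\Ch_R$ contains the isomorphisms and satisfies two-out-of-six, and the weak equivalences are its preimage under $\Omega_\bullet$, the weak equivalences inherit both properties.

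The substance of the proof lies in the class of cofibrations. These will be monomorphisms of digraphs — but here a genuine subtlety intervenes: path homology is not excisive along arbitrary inclusions (the commutative-square digraph is the union of its two directed paths of length two, yet the resulting generator of $\Omega_2$ is detected by neither path, so Mayer--Vietoris fails), so one should expect the cofibrations to be a proper subclass of the monomorphisms, carved out by a combinatorial condition on the part of $X$ not in $A$ that makes $\Omega_\bullet$ interact correctly with the pushouts one needs — plausibly the condition that the induced map of associated path complexes, or of the associated simplicial-type objects, be a cofibration in the corresponding structure. One wants this class small enough to analyze, generated by a small set of monomorphisms under pushout, transfinite composition, and retract — which makes closure under composition, containment of isomorphisms, and stability under pushout automatic — yet large enough for the factorization axiom to hold.

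The main obstacle will be stability of \emph{acyclic} cofibrations under pushout, which is an excision statement for path homology. Since $\Omega_\bullet$ carries monomorphisms of digraphs to monomorphisms of chain complexes, a cofibration $i \from A \cto X$ yields a relative complex $\Omega_\bullet(X)/\Omega_\bullet(A)$ and a long exact sequence in path homology; the question is how these behave under pushout. One must show that in a pushout $X \cup_A B$ built from $i$, the relative complex is preserved — or at least that $\Omega_\bullet(X \cup_A B)/\Omega_\bullet(B)$ is acyclic whenever $\Omega_\bullet(X)/\Omega_\bullet(A)$ was — so that when $i$ is also a weak equivalence the map $B \to X \cup_A B$ is again one. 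This is where the combinatorial definition of cofibration earns its keep, and the chain-level analysis of the relative complex of such an attachment — most cleanly carried out, I expect, by passing through path complexes and path sets, where the attached cells can be resolved more explicitly — is the technical core of the argument.

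Finally, the factorization axiom follows formally once one produces, for each digraph $A$, a good cylinder: a factorization of the fold map $A \sqcup A \to A$ as a cofibration $A \sqcup A \cto \mathrm{Cyl}(A)$ followed by a weak equivalence $\mathrm{Cyl}(A) \weto A$. Given this, an arbitrary $f \from A \to X$ factors as $A \cto X \cup_A \mathrm{Cyl}(A) \weto X$, with the cofibration assembled from the two coproduct inclusions and the cofibration into the cylinder, and the weak equivalence obtained by two-out-of-three from the retraction onto $X$. Producing the good cylinder reduces to checking that a suitable interval-like construction on $A$ — a product with a directed interval, or a subdivided or enlarged variant if the naive one fails to be a cofibration in the chosen sense — inserts $A \sqcup A$ as a cofibration, together with homotopy invariance of path homology, \ie that the collapse $\mathrm{Cyl}(A) \to A$ is a weak equivalence, which should be available from earlier results. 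Combining all of this gives $\DiGraph$ the asserted cofibration category structure.
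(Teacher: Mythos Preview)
Your overall architecture matches the paper's --- verify the axioms, with excision for acyclic cofibrations as the technical core and a cylinder for the codiagonal factorization --- but there is a genuine gap in how you propose to handle the cofibrations. You expect them to be ``generated by a small set of monomorphisms under pushout, transfinite composition, and retract,'' making the closure properties automatic. The paper proves this is impossible (\cref{no-gen-cof-set}): no small set generates the cofibrations used here. Instead, cofibrations are defined by an explicit combinatorial criterion --- an induced subgraph inclusion $A \cto X$ with no edges out of $A$ \emph{and} admitting a \emph{projecting decomposition}, a function $\pi$ assigning to each vertex with a path into $A$ a unique closest point of $A$ through which all minimal-length paths to $A$ can be routed (\cref{proj-decomp-def,cofib-def}). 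Each closure property (composition, pushout, transfinite composition) must then be verified directly from this definition, which is done in \cref{cof-wide-subcat,cof-pushout,closed-under-transf-comp}. Neither condition alone suffices, as \cref{rmk:cofib-both-conditions} shows.

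The projecting decomposition is also what drives the excision argument, and here your expectation of ``passing through path complexes and path sets'' does not match what actually happens. The paper constructs, for a cofibration $A \cto X$, an explicit isomorphism between the relative complex $\Omega(X)/\Omega(A)$ and a mapping-cone complex built entirely from $X - A$ and the height-one layer $(X-A)_1$. The map $\pi$ enters through linear operators $L^j$ (\cref{L-def}) that send a path $x_0\cdots x_{n-1}$ in the complement to an alternating sum of paths $x_0\cdots x_i\,(\pi x_i)\cdots(\pi x_{n-1})$; the hard work (\cref{iso-surjective-forward,iso-surjective}) is a delicate combinatorial argument showing that every element of $\tOmega_n(X,A)$ decomposes into such $L$-terms plus something supported on $X-A$. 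Since this mapping-cone complex depends only on data preserved by pushout (\cref{pushout-complement}), the relative homology is invariant, and stability of acyclic cofibrations follows via the long exact sequence. Without the projecting decomposition in hand, there is no evident way to run this argument.
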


(Co)fibration categories were developed by Brown \cite{brown:abstract-homotopy-theory} under the name `categories of fibrant objects,' as a framework for studying generalized cohomology theories, but have since found many other applications, e.g., in formal logic \cite{avigad-kapulkin-lumsdaine}.
Cofibration categories are a slight weakening of a more common notion of a model category, as developed by Quillen \cite{quillen:book}.
More precisely, a cofibration category structure on a category $\C$ consists of two classes of morphisms in $\C$: cofibrations and weak equivalences, subject to some axioms making it possible to speak of and conveniently work with homotopy colimits in $\C$ \cite{szumilo:two-models,kapulkin-szumilo}.

To define our cofibration category structure, we build on the development of path homology, especially in papers \cite{grigor'yan-jimenez-muranov-yau} and \cite{grigor'yan-muranov-yau}.
In particular, our definition of cofibration (\cref{cofib-def}) is a strengthening of the `no-outgoing-edges condition' used in \cite[\S5]{grigor'yan-jimenez-muranov-yau}.
However, as explained in \cref{rmk:cofib-both-conditions}, this would not be sufficient and hence we require the existence of a projecting decomposition, introduced in \cite{leinster:magnitude-graph}.

Our work provides additional insight into the structural properties of path homology.
For instance, the fact that the class of cofibrations in \cref{cofib-def} is not the saturation of a small set (\cref{no-gen-cof-set}) suggests that additional axioms will be required, as indicated in \cite[Rmk.~5.3]{grigor'yan-jimenez-muranov-yau}, to uniquely determine path homology.

\textbf{Related work.}
While we are unaware of similar work in the category of digraphs, considerations similar to ours are not without precedent in the category of (undirected) graphs.
In \cite{carranza-kapulkin:cubical-setting}, a fibration category structure is constructed on the category of simple graphs in which the weak equivalences are the weak homotopy equivalences of discrete homotopy theory \cite{babson-barcelo-longueville-laubenbacher}.
On the other hand, in \cite{goyal-santhanam}, it is proven that no model category structure exists in the category of undirected graphs with loops in which weak equivalences are the $\times$-homotopy equivalences of Dochtermann \cite{dochtermann} and cofibrations are a subclass of monomorphisms.

\textbf{Organization.}
This paper is structured as follows.
We begin by recalling the necessary notions related to digraphs, path homology, and cofibration categories in \cref{sec:preliminaries}.
In \cref{sec:cofibs}, we introduce our notion of cofibration of digraphs and study their basic properties.
The technical heart of the paper is contained in \cref{sec:excision}, where we prove the excision property, i.e., that relative homology induces isomorphisms on homotopy pushouts.
Finally, we assemble all of these results together in \cref{sec:main-thm}, proving our main theorem.

\textbf{Acknowledgements.}
This work began during a Research-in-Teams event at the University of Western Ontario, Canada in May 2022, in which all six authors participated.
The event was supported by awards from Western's Faculty of Science and the Fields Institute in Toronto, Ontario.

This material is based upon work supported by the National Science Foundation under Grant No.~DMS-1928930 while D.C., B.D., K.K., and M.S.~participated in a program supported by the Mathematical Sciences Research Institute.
The program was held in the summer of 2022 in partnership with the Universidad Nacional Autónoma de México.

It is also based upon work supported by the National Science Foundation
under Grant No.~DMS-1928930 while D.C.~and K.K.~participated in a program hosted by the Mathematical Sciences Research Institute in Berkeley, California, during the Fall 2022 semester.

While working on this paper, B.D.~was supported by a grant from the Knut and Alice Wallenberg Foundation, entitled  ``Type Theory for Mathematics and Computer Science'' (principal investigator: Thierry Coquand). M.O.~was supported by the National Science Foundation under Award No.~2202914.

We are very grateful for this support.

\section{Preliminaries} \label{sec:preliminaries}

In this section, we review and establish the necessary background for the results of \cref{sec:cofibs,sec:excision,sec:main-thm}.
We begin by defining (the category of) digraphs and establishing a few facts about colimits therein.
We then review the notion of path homology of a digraph, following \cite{grigor'yan-lin-muranov-yau:unpublished,grigor'yan-lin-muranov-yau:homotopy,grigor'yan-lin-muranov-yau}, computing path homology of a few small graphs and referencing our Python script \cite{python_script} for computations of larger examples.
Finally, we review the requisite background on cofibration categories in preparation for our main theorem asserting the existence of a cofibration category of digraphs.

\subsection*{The category of digraphs}



We begin by defining the category of directed graphs, or digraphs, as a reflective subcategory of a presheaf category.

\begin{definition}
     Define the category $\mathbb{G}$ to be generated by the graph
 \[   \begin{tikzcd}
    V\rar["s",shift left=2] \rar["t"',shift right=2] & E\lar["r" description]
    \end{tikzcd}\]
    subject to the relations $rs=rt=\id$.
\end{definition}

\begin{definition}
    The category of \emph{directed multigraphs} $\Set^{\mathbb{G}^\op}$ is the category of functors $\mathbb{G}^\op \to \Set$.
\end{definition}
Explicitly, a directed multigraph $X$ consists of a set $X_V$ of vertices and a set $X_E$ of edges, together with functions (which we denote with a slight abuse of notation):
\[ s,t\colon X_E\to X_V \quad r \from X_V \to X_E \] 
where $s$ and $t$ pick out the source and target vertices of each (directed) edge and $r\colon X_V\to X_E$ sends a vertex to a ``degenerate'' self-edge.
A morphism $f \from X \to Y$ of directed multigraphs is a natural transformation, i.e.~a pair of functions $(f_V \from X_V \to Y_V, f_E \from X_E \to Y_E)$ which preserve sources, targets, and ``degenerate'' self-edges.
That is, for $e \in X_E$ and $v \in X_V$,
\[ s(f_E(e)) = f_V(s(e)) \quad t(f_E(e)) = f_V(t(e)) \quad r(f_V(v)) = f_E(r(v)).  \]

We denote an edge of a directed multigraph $e\in X_E$ with source $v \in X_V$ and target $w \in X_V$ by $v \to w$.
\begin{definition}
    A \emph{directed graph} (or \emph{digraph}) is a directed multigraph $X\colon\mathbb{G}^\op\to\Set$ such that the function $$X_E\xrightarrow{(s,t)} X_V\times X_V$$ is injective; i.e. there is at most one edge $v\to w$ for any pair of vertices $(v,w)$.
\end{definition}
Let $\DiGraph$ denote the full subcategory of $\Set^{\mathbb{G}^\op}$ spanned by digraphs.
For a morphism $f \from X \to Y$ between digraphs, the function $f_E \from X_E \to Y_E$ is uniquely determined by $f_V \from X_V \to Y_V$.
Thus, the data of a digraph map consists of a function $X_V \to Y_V$ between vertices such that if $v \to w$ in $X$ then either $f(v) = f(w)$ (as every vertex has a self-edge) or $f(v) \to f(w)$ in $Y$.


\begin{remark}
    One may equivalently define a digraph as a set with a reflexive binary relation and a digraph map as a function which preserves this relation.
\end{remark}

To fix the notation for specific digraphs used later in the paper, we now discuss several examples of digraphs.

\begin{example}
    The empty digraph $\varnothing$ is given by the functor $X\colon\mathbb{G}^\op\to\Set$ with $X_V=X_E=\varnothing$. This is an initial object in $\DiGraph$.  
\end{example}

\begin{definition}\label{def:I_n}
    For each $n\geq 0$, the digraph $I_n$ has vertices $0,1,\dots, n$, and a unique edge $i\to i+1$ for each $0\leq i<n$. It can be depicted as

    \begin{figure}[H]
    \centering
    \begin{tikzpicture}[node distance=20pt]
        \node(0) {$\bullet$};
        \node(1) [right=of 0] {$\bullet$};
        \node(2) [right=of 1] {$\bullet$};
        \node[right=of 2] (3)  {\dots};
        \node(4) [right=of 3] {$\bullet$};

        \node[above=1pt of 0] {0};
        \node[above=1pt of 1] {1};
        \node[above=1pt of 2] {2};
        \node[above=1pt of 4] {$n$};

        \draw[->] (0) to (1);
        \draw[->] (1) to (2);
        \draw[->] (2) to (3);
        \draw[->] (3) to (4);
    \end{tikzpicture}
    \end{figure}
\end{definition}

In particular, the graph $I_0$ consists of a single vertex; it is a terminal object in $\DiGraph$.
Note that maps $I_0 \to X$ are in a one-to-one correspondence with vertices of $X$, i.e., the functor $(-)_V \colon \DiGraph \to \Set$ taking a digraph to its set of vertices is representable, represented by $I_0$.

Similarly, the functor $(-)_E \colon \DiGraph \to \Set$ taking a digraph to its set of edges is representable, represented by $I_1$.
Indeed, maps $I_1 \to X$ correspond bijectively to edges of $X$ (including degenerate edges).

\begin{definition}\label{def:Cn}
    For each $n\geq 0$, the oriented $n$-cycle $C_n$ is the digraph with vertices $0,1,\dots, n-1$, edges $i\to i+1$ for each $0 \leq i<n-1$, and an edge $n-1 \to 0$. For example, the cycle $C_8$ may be depicted as 
    
    \begin{figure}[H]
        \centering
        \begin{tikzpicture}[node distance=20pt]
            \node(0) {$\bullet$};
            \node(1) [below right=of 0] {$\bullet$};
            \node(2) [below=of 1] {$\bullet$};
            \node(3) [below left=of 2] {$\bullet$};
            \node(4) [left=of 3] {$\bullet$};
            \node(5) [above left=of 4] {$\bullet$};
            \node(6) [above=of 5] {$\bullet$};
            \node(7) [left=of 0] {$\bullet$};
            
            \node[right=1pt of 0] {$0$};
            \node[right=1pt of 1] {$1$};
            \node[right=1pt of 2] {$2$};
            \node[right=1pt of 3] {$3$};        
            \node[left=1pt of 4] {$4$};
            \node[left=1pt of 5] {$5$};
            \node[left=1pt of 6] {$6$};
            \node[left=1pt of 7] {$7$};
            
            \draw[->] (0) to (1);
            \draw[->] (1) to (2);
            \draw[->] (2) to (3);
            \draw[->] (3) to (4);
            \draw[->] (4) to (5);
            \draw[->] (5) to (6);
            \draw[->] (6) to (7);
            \draw[->] (7) to (0);
        \end{tikzpicture}
    \end{figure}
\end{definition}
\begin{definition}\label{def:alternating_cycle}
    For each $k\geq 1$, the \emph{alternating $n$-cycle} $\tilde C_{2k}$ has vertices $0,1,\dots, 2k-1$ and edges $i\to i+1$ for each $0 \leq i\leq 2k-1$ even, and $i+1 \to i$ for each $0 \leq i \leq 2k-1$ odd    (where vertices are taken modulo $2k$ as needed). For example:
        
        \begin{figure}[H]
        \centering
          \begin{subfigure}[b]{0.4\textwidth}
          \centering
            \begin{tikzpicture}[node distance=20pt]
                \node(0) {$\bullet$};
                \node(1) [right=of 0] {$\bullet$};
                \node(2) [below=of 1] {$\bullet$};
                \node(3) [left=of 2] {$\bullet$};
                
                \node[left=1pt of 0] {$0$};
                \node[right=1pt of 1] {$1$};
                \node[right=1pt of 2] {$2$};   
                \node[left=1pt of 3] {$3$};   
                
                \draw[->] (0) to (1);
                \draw[->] (2) to (1);
                \draw[->] (2) to (3);
                \draw[->] (0) to (3);
            \end{tikzpicture}
              \caption{$\tilde C_4$}
            \end{subfigure}
             \begin{subfigure}[b]{0.3\textwidth}
            \centering
            \begin{tikzpicture}[node distance=20pt]
                \node(0) {$\bullet$};
                \node(1) [above right=of 0] {$\bullet$};
                \node(2) [right=of 1] {$\bullet$};
                \node(3) [below right = of 2]{$\bullet$};
                \node(4) [below left = of 3]{$\bullet$};
                \node(5) [left = of 4]{$\bullet$};
                
                \node[left=1pt of 0] {$0$};
                \node[left=1pt of 1] {$1$};
                \node[right=1pt of 2] {$2$};  
                \node[right=1pt of 3] {$3$};
                \node[right=1pt of 4] {$4$};
                \node[left=1pt of 5] {$5$};  
                
                \draw[->] (0) to (1);
                \draw[->] (2) to (1);
                \draw[->] (2) to (3);
                \draw[->] (4) to (3);
                \draw[->] (4) to (5);
                \draw[->] (0) to (5);
            \end{tikzpicture}
            \caption{$\tilde C_6$}
        \end{subfigure}
        \caption{Depictions of alternating cycles}
    \end{figure}
\end{definition}
\begin{definition}\label{def:Cmn}
    For each $m,n\geq 0$, the $(m,n)$-cycle $C_{m,n}$ is the digraph with vertices $0,1,\dots, m+n-1$, edges $i\to i+1$ for each $0 \leq i < m$, edges $i+1 \to i$ for each $m \leq i < m+n-1$,  and an edge $0 \to m+n-1$. For example, the cycles $C_{2,1},$ $C_{3,1}$ and $C_{3,2}$ may be depicted:
    \begin{figure}[H]
        \centering
        \begin{subfigure}[b]{0.3\textwidth}
          \centering
            \begin{tikzpicture}[node distance=20pt]
                \node(0) {$\bullet$};
                \node(1) [above right=of 0] {$\bullet$};
                \node(2) [below right=of 1] {$\bullet$};
                
                \node[left=1pt of 0] {$0$};
                \node[above=1pt of 1] {$1$};
                \node[right=1pt of 2] {$2$};   
                
                \draw[->] (0) to (1);
                \draw[->] (0) to (2);
                \draw[->] (1) to (2);
            \end{tikzpicture}
              \caption{$C_{2,1}$}
              \label{tilde_C3}
            \end{subfigure}
        \begin{subfigure}[b]{0.3\textwidth}
            \centering
            \begin{tikzpicture}[node distance=20pt]
                \node(0) {$\bullet$};
                \node(1) [right=of 0] {$\bullet$};
                \node(2) [below=of 1] {$\bullet$};
                \node(3) [below=of 0] {$\bullet$};
                
                \node[left=1pt of 0] {$0$};
                \node[right=1pt of 1] {$1$};
                \node[right=1pt of 2] {$2$};   
                \node[left=1pt of 3] {$3$};   
                
                \draw[->] (0) to (1);
                \draw[->] (1) to (2);
                \draw[->] (2) to (3);
                \draw[->] (0) to (3);
            \end{tikzpicture}
            \caption{$C_{3,1}$}
            \label{fig:3-1square}
        \end{subfigure}          
        \begin{subfigure}[b]{0.3\textwidth}
            \centering
            \begin{tikzpicture}[node distance=20pt]
                \node(0) {$\bullet$};
                \node(1) [below right=of 0] {$\bullet$};
                \node(2) [below=of 1] {$\bullet$};
                \node(4) [below left=of 0] {$\bullet$};                
                \node(3) [below=of 4] {$\bullet$};
                
                \node[left=1pt of 0] {$0$};
                \node[right=1pt of 1] {$1$};
                \node[right=1pt of 2] {$2$};   
                \node[left=1pt of 3] {$3$};   
                \node[left=1pt of 4] {$4$};   
                
                \draw[->] (0) to (1);
                \draw[->] (1) to (2);
                \draw[->] (2) to (3);
                \draw[->] (4) to (3);
                \draw[->] (0) to (4);
            \end{tikzpicture} 
            \caption{$C_{3,2}$}
            \label{fig:3-2cycle}
        \end{subfigure}    
        \caption{Depictions of $C_{m,n}$ cycles}
    \end{figure}   
\end{definition}
The inclusion $\DiGraph \into \Set^{\mathbb{G}^\op}$ admits a left adjoint taking a directed multigraph to the digraph obtained by collapsing all parallel edges.
It follows that the category of digraphs admits all small limits and colimits.
Moreover, the limits are computed separately on vertices and edges, while the colimits are first computed in the category $\Set^{\mathbb{G}^\op}$ of directed multigraphs and then reflected using the left adjoint.

The following results provide a convenient characterization of pushouts in $\DiGraph$ of induced subgraph inclusions.

\begin{lemma}\label{pushout-char}
Let $A \hookrightarrow X$ denote an inclusion of directed graphs, with $A$ an induced subgraph of $X$, and let $B \hookrightarrow Y$ denote its pushout along a map $f \colon A \to B$, as depicted below.
\[
\begin{tikzcd}
A \arrow[r,"f",above] \arrow[d,hook] \pushout & B \arrow[d,hook] \\
X \arrow[r] & Y \\
\end{tikzcd}
\]
Then the vertex set of the pushout object $Y$ is given by $Y_V = (X_V \setminus A_V) \sqcup B_V$. 
Edges of $Y$ are determined as follows.
\begin{itemize}
    \item For  $x, y \in X_V \setminus A_V$, there is an edge $x \to y$ in $Y$ if and only if there is an edge $x \to y$ in $X$.
    \item For $a, b \in B_V$, there is an edge $a \to b$ in $Y$ if and only if there is an edge $a \to b$ in $B$.
    \item For $x \in X_V \setminus A_V$ and $a \in B_V$, there is an edge $x \to a$ if and only if there is an edge $x \to \overline{a}$ for some $\overline{a} \in A_V$ with $f(\overline{a}) = a$. 
    Similarly, there is an edge $a \to x$ if and only if there is an edge $\overline{a} \to x$ for some $\overline{a} \in A_V$ with $f(\overline{a}) = a$.
\end{itemize}
The map $X \to Y$ acts as the identity on the complement $X \setminus A$ and restricts to $f$ on $A$. \qed
\end{lemma}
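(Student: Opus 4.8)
The plan is to apply the description of colimits in $\DiGraph$ recalled above: the pushout $Y$ is obtained by first forming the pushout $\widetilde{Y}$ in the presheaf category $\Set^{\mathbb{G}^\op}$ of directed multigraphs and then applying the reflector $L$, which collapses parallel edges. Since colimits in a presheaf category are computed objectwise, $\widetilde{Y}$ has vertex set $X_V \sqcup_{A_V} B_V$ and edge set $X_E \sqcup_{A_E} B_E$, with source, target, and degeneracy maps induced from those of $X$ and $B$.

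First I would unpack these pushouts of sets. Since $A \hookrightarrow X$ is an induced subgraph inclusion, the function $A_V \hookrightarrow X_V$ is injective and $A_E = \{ e \in X_E \mid s(e), t(e) \in A_V \}$; in particular $A_E \hookrightarrow X_E$ is injective. Hence $\widetilde{Y}_V \cong (X_V \setminus A_V) \sqcup B_V$ and $\widetilde{Y}_E \cong (X_E \setminus A_E) \sqcup B_E$, with the map $X \to \widetilde{Y}$ acting as the identity on $X \setminus A$ and restricting to $f$ on $A$. Applying $L$ leaves the vertex set unchanged, so $Y_V = (X_V \setminus A_V) \sqcup B_V$, and it creates an edge $u \to v$ in $Y$ precisely when $\widetilde{Y}$ contains at least one edge from $u$ to $v$. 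It therefore remains only to determine, for each pair of vertices of $Y$, whether such an edge of $\widetilde{Y}$ exists; this is the content of the three bullet points, and tracing $X \to \widetilde{Y} \to Y$ through the identifications above recovers the final sentence.

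The key observation for the case analysis is that an edge of $\widetilde{Y}$ coming from $X_E \setminus A_E$ has at least one endpoint in $X_V \setminus A_V$ --- otherwise both endpoints would lie in $A_V$ and the edge would belong to $A_E$ by the induced-subgraph hypothesis --- whereas an edge coming from $B_E$ has both endpoints in $B_V$. Thus, for $x, y \in X_V \setminus A_V$, an edge $x \to y$ of $\widetilde{Y}$ can only come from $X_E$, and then lies outside $A_E$, so it is exactly an edge $x \to y$ of $X$. For $a, b \in B_V$, an edge $a \to b$ of $\widetilde{Y}$ cannot come from $X_E \setminus A_E$, hence comes from $B_E$, so it is an edge $a \to b$ of $B$. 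For $x \in X_V \setminus A_V$ and $a \in B_V$, an edge $x \to a$ of $\widetilde{Y}$ must come from some $e \in X_E \setminus A_E$ with $s(e) = x$ and $t(e) \in A_V$, $f(t(e)) = a$; conversely, any edge $x \to \overline{a}$ of $X$ with $\overline{a} \in A_V$ and $f(\overline{a}) = a$ yields such an edge, and the symmetric argument handles edges $a \to x$. I do not anticipate a real obstacle here; the only point needing care is the consistent use of the induced-subgraph hypothesis to keep edges of $X$ between two vertices of $A$ out of $X_E \setminus A_E$, which is exactly what makes the second and third bullet points come out cleanly --- degenerate self-edges included, since every vertex carries one in both $X$ and $B$.
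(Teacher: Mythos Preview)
Your argument is correct and follows precisely the approach the paper sets up in its preceding discussion of colimits in $\DiGraph$ (compute the pushout in $\Set^{\mathbb{G}^\op}$ objectwise, then reflect by collapsing parallel edges). The paper itself does not give a proof of this lemma --- it is stated with a terminal \qed --- so there is nothing to compare against; your write-up simply fills in the details the authors deemed routine.
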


\begin{corollary}\label{pushout-complement}
In the situation of \cref{pushout-char}, the map $X \to Y$ restricts to an isomorphism of complements $X \setminus A \cong Y \setminus B$. 
Moreover, a vertex $y \in Y_V \setminus B_V$ admits a path to some vertex of $B$ if and only if the vertex $x \in X \setminus A$ corresponding to $y$ under the above isomorphism admits a path to some vertex of $A$. In this case, the minimum length of a path in $Y$ from $y$ to a vertex of $B$ is equal to the minimum length of a path in $X$ from $x$ to a vertex of $A$. \qed
\end{corollary}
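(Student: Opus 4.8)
The plan is to read both assertions directly off the explicit description of the pushout provided by \cref{pushout-char}, with no further input needed.

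First I would dispatch the claim about complements. By \cref{pushout-char} the underlying vertex sets satisfy $Y_V \setminus B_V = X_V \setminus A_V$, and its first bullet states that for $x,y$ in this common set there is an edge $x \to y$ in $Y$ precisely when there is one in $X$. Since $A$ is an induced subgraph of $X$, the complement $X \setminus A$ is by definition the induced subgraph of $X$ on $X_V \setminus A_V$, and likewise $Y \setminus B$ is the induced subgraph of $Y$ on $Y_V \setminus B_V$. Combining this with the final sentence of \cref{pushout-char}, which says that $X \to Y$ acts as the identity on the complement of $A$, exhibits $X \to Y$ as restricting to an isomorphism $X \setminus A \xrightarrow{\sim} Y \setminus B$.

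For the statement about paths, I would transport minimal-length paths back and forth across $X \to Y$. Fix $x \in Y_V \setminus B_V$ and write $x$ also for the corresponding vertex of $X_V \setminus A_V$. Given a path $x = x_0 \to x_1 \to \dots \to x_n$ in $X$ with $x_n \in A_V$ of minimal length, minimality forces $x_0,\dots,x_{n-1} \notin A_V$; the first $n-1$ edges then lie between vertices of $X_V \setminus A_V$ and persist as edges of $Y$ by the first bullet of \cref{pushout-char}, while the final edge $x_{n-1} \to x_n$ with $x_n \in A_V$ yields an edge $x_{n-1} \to f(x_n)$ in $Y$ with $f(x_n) \in B_V$ by the third bullet, producing a path of length $n$ in $Y$ from $x$ to $B$. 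Conversely, given a minimal path $x = y_0 \to \dots \to y_n$ in $Y$ with $y_n \in B_V$, minimality forces $y_0,\dots,y_{n-1} \in Y_V \setminus B_V = X_V \setminus A_V$; the first $n-1$ edges come from edges of $X$ by the first bullet, and the final edge $y_{n-1} \to y_n$ with $y_{n-1}$ in the complement and $y_n \in B_V$ gives, by the third bullet, some $\overline{y}_n \in A_V$ with $f(\overline{y}_n) = y_n$ together with an edge $y_{n-1} \to \overline{y}_n$ in $X$, producing a path of length $n$ in $X$ from $x$ to $A$.

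The two constructions give inequalities in both directions between the minimal length of a path from $x$ to $B$ in $Y$ and the minimal length of a path from the corresponding vertex to $A$ in $X$, hence equality; in particular one such path exists if and only if the other does. This is essentially pure bookkeeping from \cref{pushout-char}; the only point requiring a little care is the truncation to a minimal-length path (so that it meets $A$, respectively $B$, only at its final vertex), together with the observation that, because $f$ need not be injective, the preimage $\overline{y}_n$ in the third bullet is merely \emph{some} chosen lift rather than a canonical one — so no inverse-of-a-path statement is claimed, only matching lengths.
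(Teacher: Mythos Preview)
Your proof is correct and is precisely the argument the paper has in mind: the corollary is stated with an immediate \qed, and your write-up simply unpacks the explicit description of the pushout from \cref{pushout-char}, which is all that is needed. There is nothing to compare—the paper gives no alternative argument.
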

We will also use the following construction:
\begin{definition}
    The box product of two graphs $X$ and $Y$ is the graph $X \gtimes Y$ with vertices $X_V \times Y_V$
    and an edge $(x,x') \to (y,y')$ when either of the following conditions holds:
    \begin{itemize}
        \item there is an edge $x \to y$ in $X$ and $x' = y'$, or
        \item there is an edge $x' \to y'$ in $Y$ and $x = y$.
    \end{itemize}
\end{definition}

\subsection*{Path homology}

We now define path homology of digraphs, following \cite{grigor'yan-lin-muranov-yau:unpublished,grigor'yan-lin-muranov-yau:homotopy}. 

\begin{definition} \label{def:chain-all-paths}
Let $X$ be a digraph, and let $R$ be a commutative ring.
Define the following $R$-modules
\begin{align*}
    K_n (X ; R) &= R \{X_V^{n+1}\}, \\
    DK_n (X ; R) &= R \{ (x_0, x_1, \dots, x_n ) \in X_V^{n+1} \,|\, x_i = x_{i+1} \text{ for some } 0 \leq i < n \}, \\
    C_n (X ; R) & = K_n X / DK_n X.
\end{align*}
\end{definition}

A generator of $K_n (X ; R)$ can be thought of as a path of length $n$ in the complete digraph on the vertices of $X$.
The generators of the submodule $DK_n (X ; R)$ can be thought of as \emph{degenerate} paths of length $n$ that contain self-loops $x \to x$, which are referred to as \emph{non-regular} paths in \cite[\S2.3]{grigor'yan-lin-muranov-yau:unpublished}.
The quotient $C_n (X ; R)$ is then generated by the regular paths, which are paths that do not contain self-loops.

There are differentials on $K_n (X ; R)$ given by the usual alternating sum formula:
\[
    \bd_n(v_0, \ldots, v_n)=\sum_{i=0}^n (-1)^i(v_0, \ldots, \hat{v_{i}}, \ldots, v_n).
\]
These differentials satisfy $\bd^2 = 0$ (i.e. $\bd_{n-1} \bd_n = 0$).
Further, $\bd_n$ restricts to a map $DK_n (X ; R) \to DK_{n-1} (X ; R)$, and thus passes to the quotient, resulting in a well-defined map:

\begin{definition}
    Let $\bd_n \colon C_n (X ; R) \to C_{n-1} (X ; R)$ be the map induced by $\bd_n \colon K_n (X ; R) \to K_{n-1} (X ; R)$.
\end{definition}

\begin{lemma}[{\cite[Lemma 2.4]{grigor'yan-lin-muranov-yau:unpublished}}]
    With the definitions above, $(C_\bullet (X ; R), \bd_\bullet)$ is a chain complex. \qed
\end{lemma}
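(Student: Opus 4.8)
The plan is to reduce the assertion to two facts about the differential $\bd_\bullet$ on the (much larger) complex $K_\bullet(X;R)$: first, that $\bd_{n-1}\bd_n = 0$ already holds on $K_\bullet(X;R)$; and second, that $\bd_n$ carries the submodule $DK_n(X;R)$ into $DK_{n-1}(X;R)$. Granting these, the maps $\bd_n \colon C_n(X;R) \to C_{n-1}(X;R)$ induced on the quotients $C_n(X;R) = K_nX / DK_nX$ are well-defined by the second fact, and they automatically satisfy $\bd_{n-1}\bd_n = 0$ because this identity holds before passing to quotients. So the real content is the two facts about $K_\bullet$, of which the second is the substantive one.

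For $\bd^2 = 0$ on $K_\bullet(X;R)$ I would run the standard computation: expanding $\bd_{n-1}\bd_n(v_0,\dots,v_n)$ produces a double sum, indexed by ordered pairs of deleted positions, of tuples of length $n-1$. Each such tuple arises in exactly two ways — deleting position $i$ then position $j$, versus deleting $j$ then $i{-}1$, for $i > j$ — and an index bookkeeping shows these two contributions carry opposite signs and cancel. This argument is formally identical to the proof that the simplicial boundary operator squares to zero and uses nothing specific to digraphs; I would either cite this or present it in a sentence.

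The step I regard as the heart of the matter is verifying that $\bd_n$ restricts to a map $DK_n(X;R) \to DK_{n-1}(X;R)$. Let $(v_0,\dots,v_n)$ be a generator of $DK_n(X;R)$, so that $v_i = v_{i+1}$ for some $0 \le i < n$, and consider $\bd_n(v_0,\dots,v_n) = \sum_{j=0}^n (-1)^j (v_0,\dots,\hat{v_j},\dots,v_n)$. Every summand with $j \notin \{i, i+1\}$ still contains the adjacent equal pair at positions $i, i+1$ (reindexed), hence lies in $DK_{n-1}(X;R)$. The two remaining summands, for $j = i$ and $j = i+1$, both produce the tuple $(v_0,\dots,v_{i-1},v_i,v_{i+2},\dots,v_n)$ — deleting either copy of the doubled vertex leaves the same path — but with signs $(-1)^i$ and $(-1)^{i+1}$, so they cancel. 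Thus $\bd_n(v_0,\dots,v_n) \in DK_{n-1}(X;R)$, which is exactly what is needed for $\bd_n$ to descend to the quotient. Combining this with the previous paragraph, the induced maps $\bd_\bullet$ on $C_\bullet(X;R)$ form a chain complex, completing the proof.
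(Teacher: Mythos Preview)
Your proof is correct and follows exactly the approach the paper sketches in the paragraph preceding the lemma: the paper asserts without detail that $\bd^2 = 0$ on $K_\bullet(X;R)$ and that $\bd_n$ restricts to $DK_n(X;R) \to DK_{n-1}(X;R)$, then cites the reference and places a \qed. You have simply filled in the omitted verifications, and your cancellation argument for the restriction to $DK_\bullet$ is the standard one.
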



\begin{definition}\label{defn:A_n} 
Let $X$ be a digraph, and let $R$ be a commutative ring.
Define the $R$-modules:
\begin{align*}
    \widetilde{A}_n (X ; R) &= R(\DiGraph(I_n, X))\text{,} \\
    D\widetilde{A}_n (X ; R) &= R \left\{f \colon I_n \to X \ | \ f \text{ factors through a map } I_n \to I_{n-1}\right\}\text{,} \\
    A_n (X ; R) &= \widetilde{A}_n (X ; R) / D\widetilde{A}_n (X ; R)\text{.}
\end{align*}
\end{definition}

In the nomenclature of \cite{grigor'yan-lin-muranov-yau:unpublished}, the elements of $\DiGraph(I_n, X)$ are called \emph{allowed paths}, and consequently $\widetilde{A}_n (X ; R)$ is the \emph{$R$-module of allowed paths}.
The quotient $A_n (X ; R)$ is then the \emph{$R$-module of allowed regular paths}.




Since $A_n (X ; R)$ is a submodule of $C_n (X ; R)$, we may restrict $\bd_n \colon C_n (X ; R) \to C_{n-1} (X ; R)$ to $A_n (X ; R)$ obtaining $\bd_n \colon A_n (X ; R) \to C_{n-1} (X ; R)$.
However, the restricted map need not have image in $A_{n-1} (X; R)$ (e.g., consider $X = I_2$). Hence, $A_\bullet (X ; R)$ does not naturally form a chain complex.
The next definition explains how to remedy this issue.

\begin{definition} 
Let $\iota_n$ denote the inclusion $A_n (X; R) \hookrightarrow C_n (X; R)$.
Let $\Omega_0 (X; R) = A_0 (X; R)$. 
For all $n > 0$, define $\Omega_n (X; R)$ to be the pullback of the diagram:
\begin{center}
\begin{tikzcd}[column sep = large]
\Omega_n (X; R) \ar[r] \ar[d] \ar[rd, phantom, "\lrcorner" very near start] & A_{n-1}(X; R)\dar["\iota_{n-1}", hookrightarrow]\\
A_n (X; R)\rar["\bd_n\circ \iota_n"]& C_{n-1}(X; R)
\end{tikzcd}
\end{center}
\end{definition}

Explicitly, the elements in $\Omega_n (X; R)$ consist of pairs  $(a,b) \in  A_n (X ; R) \times A_{n-1} (X ; R)$ such that $(\bd_n \iota_n)(a)=\iota_{n-1}(b)$ in $C_{n-1} (X ; R)$.

\begin{definition}
    For each $n > 0$, let $\bd_n\colon \Omega_n (X; R) \to\Omega_{n-1} (X; R)$ be the map $\bd_n(a,b) := (b,0)$.
\end{definition}

\begin{lemma}[{\cite[\S2.4]{grigor'yan-lin-muranov-yau:unpublished}}]
    With the definitions above, $(\Omega_\bullet (X; R), \bd_\bullet)$ is a chain complex.
\end{lemma}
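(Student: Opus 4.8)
The plan is to check the two defining requirements of a chain complex for $(\Omega_\bullet(X;R),\bd_\bullet)$ separately: first, that each $\bd_n$ is a well-defined $R$-linear map whose image lands in $\Omega_{n-1}(X;R)$; and second, that $\bd_{n-1}\circ\bd_n = 0$. Both will follow formally from the already-established fact that $(C_\bullet(X;R),\bd_\bullet)$ is a chain complex, together with the explicit description of $\Omega_n(X;R)$ as the submodule of $A_n(X;R)\times A_{n-1}(X;R)$ consisting of pairs $(a,b)$ with $(\bd_n\circ\iota_n)(a)=\iota_{n-1}(b)$ in $C_{n-1}(X;R)$.

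The only step with any content is well-definedness. I would fix $n\geq 2$ and a pair $(a,b)\in\Omega_n(X;R)$, so that $(\bd_n\circ\iota_n)(a)=\iota_{n-1}(b)$ in $C_{n-1}(X;R)$. Applying $\bd_{n-1}\colon C_{n-1}(X;R)\to C_{n-2}(X;R)$ to both sides and using $\bd_{n-1}\circ\bd_n = 0$ in $C_\bullet(X;R)$ gives $(\bd_{n-1}\circ\iota_{n-1})(b)=\bd_{n-1}\bigl(\bd_n\iota_n(a)\bigr)=0=\iota_{n-2}(0)$. Hence the pair $(b,0)\in A_{n-1}(X;R)\times A_{n-2}(X;R)$ satisfies the defining condition of $\Omega_{n-1}(X;R)$, so $\bd_n(a,b)=(b,0)$ indeed lies in $\Omega_{n-1}(X;R)$. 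The case $n=1$ is degenerate: since $A_0(X;R)=C_0(X;R)$ and $\Omega_0(X;R)=A_0(X;R)$, the formula is read as $\bd_1(a,b)=b$, and there is nothing to verify. Linearity of $\bd_n$ is immediate, since $(a,b)\mapsto(b,0)$ is the restriction to $\Omega_n(X;R)$ of the evident $R$-linear projection-type map $A_n(X;R)\times A_{n-1}(X;R)\to A_{n-1}(X;R)\times A_{n-2}(X;R)$.

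Finally, for $n\geq 2$ the composite $\bd_{n-1}\circ\bd_n$ sends $(a,b)\mapsto(b,0)\mapsto(0,0)$, so $\bd_{n-1}\circ\bd_n=0$ (for $n=2$ one again uses the reading of $\bd_1$ above). I do not anticipate a genuine obstacle: once the pullback description of $\Omega_n(X;R)$ is in hand, the argument is purely formal and $\bd^2=0$ for $\Omega_\bullet$ is strictly easier than the corresponding fact for $C_\bullet$. The one point that needs a moment of care is the bookkeeping at the bottom of the complex, where the pullback square defining $\Omega_n(X;R)$ degenerates because $\iota_0$ is an isomorphism; this is routine.
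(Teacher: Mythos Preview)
Your proposal is correct and follows the same approach as the paper; in fact the paper's proof consists only of the one-line computation $\bd_{n-1}(\bd_n(a,b)) = \bd_{n-1}(b,0) = (0,0)$, so you have supplied more detail than the paper does by also verifying that $(b,0)$ lies in $\Omega_{n-1}(X;R)$.
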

\begin{proof}
    We have $\bd_{n-1} (\bd_n (a,b)) = \bd_{n-1} (b,0) = (0,0)$ i.e. $\bd^2 = 0$.
\end{proof}

\begin{definition}
The \emph{path homology} of a digraph $X$ with $R$-coefficients, denoted $H_\bullet (X ; R)$, is the homology of the chain complex $(\Omega_\bullet (X ; R), \bd_\bullet)$.
\end{definition}

\begin{definition} \label{def:homology-iso}
    Given a ring $R$, a digraph map $f \from X \to Y$ is an \emph{$R$-homology isomorphism} if for all $n \geq 0$, the induced map $f_* \from H_n(X; R) \to H_n(Y; R)$ is an isomorphism of $R$-modules.
\end{definition}

If the ring $R$ is clear from context or the statement is true for an arbitrary coefficient ring, we will speak of just \emph{homology isomorphisms}.
Likewise, we often write $\Omega_n (X)$, $H_n (X)$, etc., when the coefficient ring $R$ is clear from context.
Our main goal (\cref{cofib-cat}) is to show that homology isomorphisms (for any ring $R$) are a `convenient' class of weak equivalences in that they are a part of a cofibration category structure on $\DiGraph$.

For the benefit of the readers unfamiliar with path homology, we now compute some examples of path homology by hand.
Coefficients are in a general ring $R$.

\begin{example}\label{ex:homology_In}
Let $I_2$ be as in \cref{def:I_n}. The regular allowed paths in $I_2$ as well as representatives for elements in $\Omega_\bullet (I_2)$ can be found in the table below.
Although $012$ is present in $A_2$, its boundary is $12 - 02 + 01$ which does not land in $A_1$ since $02$ is not an allowed path in $A_1$.
Thus $\Omega_2 = 0$.

\begin{figure}[H]
\centering
 \begin{subfigure}[c]{0.4\textwidth}
        \begin{tikzpicture}[node distance=40pt]        
            \node(0) {$\bullet$};
            \node(1) [right=of 0]{$\bullet$};
            \node(2) [right=of 1]{$\bullet$};
            
            \node[above=1pt of 0] {$0$};
            \node[above=1pt of 1] {$1$};
            \node[above=1pt of 2] {$2$};
            
            \draw[->] (0) to (1);
            \draw[->] (1) to (2);
        \end{tikzpicture}
\end{subfigure}
\begin{subfigure}[c]{0.5\textwidth}
    \centering
\begin{tabular}{||c||  c | c | c ||} 
 \hline
 $l$ &  $A_l$ & $\Omega_l$ & $H_l$ \\ [.5ex] 
 \hline\hline
0 &  0 \, 1 \, 2  & 0 \, 1 \, 2 & $R$  \\ 
 \hline
 1 &  01 \, 12  & 01 \, 12 & 0  \\
 \hline
 $2$ & 012 & $\varnothing$ & 0 \\
 \hline
 $\geq 3$ & $\varnothing$ & $\varnothing$ & 0 \\
 \hline
\end{tabular}
\end{subfigure}
\end{figure}
\end{example}

The previous example is a simple case of the following general result:

\begin{lemma}[{\cite[Cor.~4.6]{grigor'yan-lin-muranov-yau}}]\label{rmk:tree_homology} 
If the underlying graph of $X \in \DiGraph$ is a tree, then $\Omega_l(X) = 0$ for all $l \geq 2$, and $X$ has trivial path homology above degree $1$.
\end{lemma}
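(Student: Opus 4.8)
The plan is to prove the first, stronger assertion --- that $\Omega_l(X) = 0$ for all $l \ge 2$ --- from which the second is immediate, since $H_l(X)$ is a subquotient of $\Omega_l(X)$.

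Fix $l \ge 2$. Since $\iota_{l-1}\colon A_{l-1}(X) \hookrightarrow C_{l-1}(X)$ is a monomorphism, the pullback square defining $\Omega_l(X)$ identifies it with the submodule $\{\,a \in A_l(X) \mid (\bd_l\circ\iota_l)(a) \in A_{l-1}(X)\,\}$ of $A_l(X)$, where we regard $A_{l-1}(X) \subseteq C_{l-1}(X)$. Now $C_{l-1}(X)$ is free on the regular paths of length $l-1$ in the complete digraph on $X_V$, and $A_{l-1}(X)$ is spanned by the \emph{allowed} such paths, i.e.\ those all of whose consecutive pairs are edges of $X$; so we may write $C_{l-1}(X) = A_{l-1}(X) \oplus B$, with $B$ spanned by the regular non-allowed paths, and let $\pi \colon C_{l-1}(X) \to B$ be the projection. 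It then suffices to show that $\pi\circ\bd_l\circ\iota_l\colon A_l(X) \to B$ is injective.

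Here the tree hypothesis enters, through two observations about the faces of an allowed regular path $p = (v_0,\dots,v_l)$. For $0 < i < l$, the face $d_i p = (v_0,\dots,v_{i-1},v_{i+1},\dots,v_l)$ is still regular, since $v_{i-1} = v_{i+1}$ would give edges $v_{i-1}\to v_i$ and $v_i \to v_{i-1}$, a digon, impossible in a tree; and it is \emph{not} allowed, since $v_{i-1}\to v_{i+1}$ being an edge would close a triangle $v_{i-1}\to v_i \to v_{i+1}\to v_{i-1}$ --- in fact the only consecutive pair of $d_i p$ failing to be an edge of $X$ is the one in position $i-1$, while the outer faces $d_0 p$ and $d_l p$ stay allowed. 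Hence for $a = \sum_p c_p p \in A_l(X)$ we get $\pi(\bd_l\iota_l(a)) = \sum_p c_p \sum_{i=1}^{l-1}(-1)^i\, d_i p$. To read off a coefficient, fix $p$ with $c_p \neq 0$ and consider $q := d_1 p = (v_0,v_2,v_3,\dots,v_l)$, whose unique non-edge pair is in position $0$. If $p'$ is an allowed regular path of length $l$ with $d_{i'}p' = q$ for some $1 \le i' \le l-1$, then comparing positions of non-edge pairs forces $i' = 1$, so $p' = (v_0,v,v_2,v_3,\dots,v_l)$ for some vertex $v$ with $v_0 \to v \to v_2$; since $v \notin\{v_0,v_2\}$ and a tree contains a unique walk of length two between two vertices at distance two, $v = v_1$ and $p' = p$. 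Therefore the coefficient of $q$ in $\pi(\bd_l\iota_l(a))$ is exactly $-c_p$, so $\pi(\bd_l\iota_l(a)) = 0$ forces every $c_p$ to vanish, i.e.\ $a = 0$. This gives $\Omega_l(X) = 0$, and hence the lemma.

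I expect the main obstacle to be the combinatorial bookkeeping in the last paragraph: making airtight that in a tree the inner faces $d_i p$ are regular but not allowed (this is the one place the absence of digons and of triangles is used) and that the position of the non-edge pair genuinely singles out $(p,1)$ as the only preimage of $q$. One should also separately check the degenerate case $l = 2$, where $q$ is merely the length-one path $(v_0,v_2)$, and note that the argument is uniform in the coefficient ring $R$.
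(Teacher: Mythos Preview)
Your argument is correct. The paper does not give its own proof of this lemma; it is stated with a citation to \cite[Cor.~4.6]{grigor'yan-lin-muranov-yau} and used as a black box. Your direct proof that $\pi\circ\bd_l\circ\iota_l$ is injective via the ``unique non-edge position'' bookkeeping is a clean, self-contained alternative to invoking the reference.

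One point worth making explicit for the reader: your use of ``a digon, impossible in a tree'' presupposes that the underlying undirected graph of $X$ counts the pair of directed edges $a\to b$, $b\to a$ as a $2$-cycle (equivalently, that $X$ has no such pairs). This is the intended reading in the GLMY framework, but it is not spelled out in the present paper; without it the statement fails already for the digraph $0\rightleftarrows 1$, where $010\in\Omega_2$. Once that convention is fixed, your two combinatorial observations---that inner faces $d_ip$ of an allowed regular path are regular (no digons) and not allowed (no triangles), with the non-edge pinned to position $i-1$---go through exactly as written, and the recovery of $p$ from $q=d_1p$ via uniqueness of length-two walks in a tree is sound. The case $l=2$ needs no separate treatment: the same position argument applies with $q=(v_0,v_2)$.
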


\begin{example}[Oriented triangle]\label{ex:homology_C3} 
Consider the cycle graph $C_3$ as in \cref{def:Cn}.

\begin{figure}[H]
\centering
 \begin{subfigure}[c]{0.3\textwidth}
        \begin{tikzpicture}[node distance=40pt]      
            \node(1) {$\bullet$};
            \node(0)[below left = of 1] {$\bullet$};
            \node(2)[below right=of 1]{$\bullet$};
            \node[above =1pt of 1]{$1$};
            \node[below =1pt of 2]{$2$};
            \node[below =1pt of 0]{$0$};
            \draw[->] (2) to (0);
            \draw[->] (0) to (1);
            \draw[->] (1) to (2);
        \end{tikzpicture}
\end{subfigure}
\begin{subfigure}[c]{0.6\textwidth}
    \centering
         \begin{tabular}{||c||  c | c | c ||} 
         \hline
        $l$ &  $A_l$ & $\Omega_l$ & $H_l$ \\ [.5ex] 
         \hline\hline
        0 &  0 \, 1 \, 2 & 0 \, 1 \, 2 & $R$  \\ 
        \hline
        1 &  01 \, 12 \, 20  & 01 \, 12 \, 20 & $R$ \\
        \hline
        2 & 012 \, 120 \, 201 & $\varnothing$ & 0 \\
        \hline
        3 & 0120 \, 1201 \, 2012 & $\varnothing$ & 0 \\
        \hline
        $l \geq 4$  & $01\cdots l$\,\,\,\,\,  $12\cdots l\!+\!1$\,\,\,\,\,  $20\cdots l\!+\!2$& $\varnothing$ & 0 \\ [1ex] 
        \hline
        \end{tabular}
    \end{subfigure}
\end{figure}

The image of $\bd_1: \Omega_1(C_3) \to \Omega_0(C_3)$ is generated by $0-1,\, 1-2,$ and the kernel of $\bd_1$ is generated by $01+12+20.$ 
To see that $\Omega_l$ is zero for $l\geq 2$, 
note that the boundary $\bd_l$ of $(i, i+1,i+2, \ldots ,i+l)$ includes a nonzero summand $$(i,i+2,\ldots, i+l)\in R\{(C_3)_V^l\},$$ which is not an allowed path and is not a summand of the boundary of any of the other generator $(j,j+1,j+2,\ldots, j+l)$ for $A_l(C_3).$ 
\end{example}

\begin{example}[Commuting triangle] \label{ex:commuting_triangle_homology}
Consider $C_{2,1}$ as in \cref{def:Cmn}.

 \begin{figure}[H]
        \centering
 \begin{subfigure}[c]{0.3\textwidth}
        \begin{tikzpicture}[node distance=40pt]      
            \node(1) {$\bullet$};
            \node(0)[below left = of 1] {$\bullet$};
            \node(2)[below right=of 1]{$\bullet$};
            \node[above =1pt of 1]{$1$};
            \node[below =1pt of 2]{$2$};
            \node[below =1pt of 0]{$0$};
            \draw[->] (0) to (2);
            \draw[->] (0) to (1);
            \draw[->] (1) to (2);
        \end{tikzpicture}
\end{subfigure}
\begin{subfigure}[c]{0.6\textwidth}
        \centering
    \begin{tabular}{||c||  c | c| c ||} 
 \hline
 $l$ &  $A_l$ & $\Omega_l$ & $H_l$ \\ [.5ex] 
 \hline\hline
0 &  0 \, 1 \, 2 & 0 \, 1 \, 2  & $R$\\ 
 \hline
 1 &  01 \, 12 \, 02 & 01 \, 12 \, 02 & 0  \\
 \hline
 2 & 012 & 012 & 0\\
 \hline
 $\geq 3$ & $\varnothing$ & $\varnothing$ & 0 \\
 \hline
\end{tabular} 
\end{subfigure}
    \end{figure}  
The image of $\bd_1:\Omega_1(C_{2,1}) \to \Omega_0(C_{2,1})$ is generated by $1-0, 2-1$ so the kernel of $\bd_2$ on $\Omega_\bullet (C_{2,1})$ is one-dimensional. Its generator is given by $\bd_2(201)=12-02+01$.
\end{example}

\begin{example}[Commuting Square]\label{ex:commuting_square} 
Consider $C_{2,2}$ as in \cref{def:Cmn}.

    \begin{figure}[H]
        \centering
        \begin{subfigure}[c]{0.3\textwidth}
        \begin{tikzpicture}[node distance=40pt]        
            \node(0) {$\bullet$};
            \node(1) [right= of 0] {$\bullet$};
            \node(3) [below=of 1] {$\bullet$};
            \node(2) [below=of 0] {$\bullet$};

            \node[left=1pt of 0] {$0$};
            \node[right=1pt of 1] {$1$};
            \node[right=1pt of 3] {$3$};  
            \node[left=1pt of 2] {$2$}; 

            \draw[->] (0) to (1);
            \draw[->] (1) to (3);
            \draw[->] (0) to (2);
            \draw[->] (2) to (3);

        \end{tikzpicture}
        \end{subfigure}
        \begin{subfigure}[c]{0.6\textwidth}
        \centering
        \begin{tabular}{||c||  c | c | c ||} 
         \hline
         $l$ &  $A_l$ & $\Omega_l$ & $H_l$ \\ [.5ex] 
         \hline\hline
        0 &  0 \, 1 \, 2 \,3 & 0 \, 1 \, 2 \,3 & $R$ \\ 
         \hline
          1 & 01\,\,\,\, 13\,\,\,\, 02 \,\,\,\,23 & 01\,\,\,\, 13\,\,\,\, 02 \,\,\,\,23 & 0\\ 
         \hline
          2 & 013\,\,\,\, 023 & 013 - 023 & 0\\ 
         \hline
          $\geq 3$ & $\varnothing$ & $\varnothing$ & 0 \\
         \hline
        \end{tabular}
        \end{subfigure}
    \end{figure}  
Despite looking like a the topological circle $S^1$, $H_1$ of this graph is $0$.
Individually, the boundaries of the allowed paths $013$ and $023$ are not in $A_1$, since $A_1$ does not contain $03$.
However, the boundary of the linear combination $013 - 023$ \emph{does} land in $A_1$.
We thus have a single non-zero element in $\Omega_2$ whose boundary generates the kernel of $\bd_1$ and $H_1 = 0$. 
\end{example}

\begin{example}\label{ex:3_1_square}
Consider the cycle $C_{3,1}$ as in \cref{def:Cmn}.

    \begin{figure}[H]
        \centering
        \begin{subfigure}[c]{0.3\textwidth}
            \begin{tikzpicture}[node distance=40pt]
                \node(0) {$\bullet$};
                \node(1) [right=of 0] {$\bullet$};
                \node(2) [below=of 1] {$\bullet$};
                \node(3) [below=of 0] {$\bullet$};
                
                \node[left=1pt of 0] {$0$};
                \node[right=1pt of 1] {$1$};
                \node[right=1pt of 2] {$2$};   
                \node[left=1pt of 3] {$3$};   
                
                \draw[->] (0) to (1);
                \draw[->] (1) to (2);
                \draw[->] (2) to (3);
                \draw[->] (0) to (3);
            \end{tikzpicture}
        \end{subfigure}
        \begin{subfigure}[c]{0.6\textwidth}
        \centering
        \begin{tabular}{||c||  c | c | c ||} 
         \hline
         $l$ &  $A_l$ & $\Omega_l$ & $H_l$ \\ [.5ex] 
         \hline\hline
        0 &  0 \, 1 \, 2 \,3 & 0 \, 1 \, 2 \, 3 & $R$ \\ 
         \hline
         1 &  01 \, 12 \, 23 \, 03 & 01 \, 12 \, 20 \,  03 & $R$ \\
         \hline
          2 & 012 \, 123 & $\varnothing$ & $0$ \\
         \hline
          3 & 0123 & $\varnothing$ & 0\\
         \hline
        \end{tabular}
        \end{subfigure}
    \end{figure}  Unlike the previous example, for $l\geq 2$, there are no linear combinations of elements in $A_l$ whose boundaries lie in $A_{l-1}$.
Hence $\Omega_l = 0$ for $l \geq 2$.
The kernel of $\bd_1$ is 1-dimensional and the image of $\bd_2$ is $0$, so $H_1 = R$.
\end{example}

\begin{remark}\label{rmk:homology_cycles} \cref{ex:homology_C3,ex:commuting_triangle_homology,ex:commuting_square,ex:3_1_square} are explicit cases of \cite[Prop.~4.3]{grigor'yan-lin-muranov-yau}.
A cycle graph with at least three vertices and any orientation on edges has the homology type of $S^1$ unless it is the commuting triangle (\cref{ex:commuting_triangle_homology}) or the commuting square (\cref{ex:commuting_square}).
\end{remark}
The previous examples may give the impression that the homology of a digraph is always trivial above degree 1, but this is not the case. 
\begin{example}\label{ex:suspension} 
Let $S\tilde C_4$ denote the digraph with vertices $a, 0, 1, 2, 3, b$ as depicted in the diagram below, where $1$ and $3$ are the two unlabelled vertices (it does not matter which):

    \begin{figure}[H]
        \centering
        \begin{subfigure}[c]{0.3\textwidth}
        \begin{tikzpicture}[node distance=40pt]        
            \node(0) {$\bullet$};
            \node(1) [above right=10pt and 25pt of 0] {$\bullet$};
            \node(2) [right=of 1] {$\bullet$};
            \node(3) [right=of 0] {$\bullet$};
            \node(a) [above left=40pt and 1pt of 3] {$\bullet$};
            \node(b) [below right=40pt and 1pt of 1] {$\bullet$};

            \node[left=1pt of 0] {$0$};
            \node[right=1pt of 2] {$2$};  
            \node[above=1pt of a] {$a$};
            \node[below=1pt of b] {$b$};

            \draw[->, dotted] (0) to (1);
            \draw[->, dotted] (2) to (1);
            \draw[->] (2) to (3);
            \draw[->] (0) to (3);
            
            \draw[->] (a) to (0);
            \draw[->, dotted] (a) to (1);
            \draw[->] (a) to (2);
            \draw[->] (a) to (3);

            \draw[->] (b) to (0);
            \draw[->, dotted] (b) to (1);
            \draw[->] (b) to (2);
            \draw[->] (b) to (3);
        \end{tikzpicture}
        \end{subfigure}
        \begin{subfigure}[c]{0.6\textwidth}
        \centering
        \begin{tabular}{||c||  c | c | c ||} 
         \hline
         $l$ &  $A_l$ & $\Omega_l$ & $H_l$ \\ [.5ex] 
         \hline\hline
        0 &  a \, 0 \, 1 \, 2 \,3\, b & a \, 0 \, 1 \, 2 \,3\, b & $R$ \\ 
         \hline
          & a0\,\,\,\, a1\,\,\,\, a2 \,\,\,\,a3 & a0\,\,\,\, a1\,\,\,\, a2 \,\,\,\,a3 & \\ 
          1 & 01\,\,\,\, 21\,\,\,\, 23 \,\,\,\,03 & 01\,\,\,\, 21\,\,\,\, 23 \,\,\,\,03 & 0\\ 
          & b0 \,\,\,\, b1\,\,\,\, b2\,\,\,\, b3 & b0 \,\,\,\, b1\,\,\,\, b2\,\,\,\, b3 &  \\
         \hline
          \multirow{2}{*}{2} & a01\,\,\,\, a21\,\,\,\, a23 \,\,\,\,a03 & a01\,\,\,\, a21\,\,\,\, a23 \,\,\,\,a03 & \multirow{2}{*}{$R$}\\
            & b01\,\,\,\, b21\,\,\,\, b23 \,\,\,\,b03 & b01\,\,\,\, b21\,\,\,\, b23 \,\,\,\,b03 & \\
         \hline
          $\geq 3$ & $\varnothing$ & $\varnothing$ & 0 \\
         \hline
        \end{tabular}
        \end{subfigure}
    \end{figure}  

Note that all length two allowed paths contribute to $\Omega_2$ in this case. The image of $\bd_1$ is $5$-dimensional so the kernel of $\bd_1$ is $7$-dimensional.
The image of $\bd_2$ is also $7$-dimensional, so $H_1 = 0$.
The kernel of $\bd_2$ is generated by $a01-a21+a23-a03-b01+b21-b23+b03$ and, since $\Omega_3=0$, $H_2 = R$. 
\end{example}

\begin{remark} 
The graph of \cref{ex:suspension} above can be thought of a suspension of $\tilde C_4$ and the result of the computation is closely related to \cite[Prop.~5.10]{grigor'yan-lin-muranov-yau}.
\end{remark}

We have written a Python script to compute the dimensions of path homology over $\mathbb{R}$, which may be found at \cite{python_script}.
The script first generates the matrix representing the map:
    \[
        \begin{tikzcd}
            A_n X \rar["\bd_n \circ \iota_n"] &
            C_{n-1} X \rar["\pi_{n-1}"] &
            C_{n-1} X / A_{n-1} X
        \end{tikzcd}
    \]  
It then computes a basis for the nullspace of this matrix, which is a basis of $\Omega_n$ by the following lemma:

\begin{lemma} \label{omega_kernel}
    $\Omega_n X$ is the kernel of the map $A_n X \to C_{n-1} X/ A_{n-1} X$, i.e. we have an exact sequence:
    \[
        \begin{tikzcd}
            0 \rar & 
            \Omega_n X \rar[hookrightarrow] &
            A_n X \rar &
            C_{n-1} X / A_{n-1} X
        \end{tikzcd}
    \]  
\end{lemma}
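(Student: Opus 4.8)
The plan is simply to unwind the pullback that defines $\Omega_n(X;R)$ and observe that it is, tautologically, the kernel in question. First I would dispose of the case $n = 0$: here $\Omega_0(X;R) = A_0(X;R)$ by definition, and $C_{-1}(X;R) = 0$, so the map $A_0 X \to C_{-1} X / A_{-1} X$ is the zero map and its kernel is all of $A_0 X = \Omega_0 X$; the displayed sequence is then exact for trivial reasons.

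For $n > 0$, recall that $\Omega_n X$ was defined as the pullback of $\bd_n \circ \iota_n \colon A_n X \to C_{n-1} X$ along $\iota_{n-1} \colon A_{n-1} X \hookrightarrow C_{n-1} X$, so explicitly $\Omega_n X = \{(a,b) \in A_n X \times A_{n-1} X \mid (\bd_n\iota_n)(a) = \iota_{n-1}(b)\}$, with the map $\Omega_n X \to A_n X$ (the left leg of the square, which is the inclusion appearing in the statement) given by $(a,b) \mapsto a$. Since $\iota_{n-1}$ is a monomorphism --- it is the inclusion of the submodule $A_{n-1}X \subseteq C_{n-1}X$ --- its pullback $\Omega_n X \to A_n X$ is again a monomorphism; indeed, $b$ is uniquely determined by $a$ whenever it exists, because $\iota_{n-1}(b) = (\bd_n\iota_n)(a)$ has at most one solution $b$. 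This gives exactness at $\Omega_n X$.

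It then remains to identify the image of $\Omega_n X \to A_n X$ with $\ker(\pi_{n-1} \circ \bd_n \circ \iota_n)$, where $\pi_{n-1} \colon C_{n-1} X \to C_{n-1} X / A_{n-1} X$ is the quotient map. An element $a \in A_n X$ lies in this image precisely when there is some $b \in A_{n-1} X$ with $\iota_{n-1}(b) = (\bd_n\iota_n)(a)$, i.e.\ precisely when $(\bd_n\iota_n)(a)$ belongs to the submodule $A_{n-1}X$ of $C_{n-1}X$, i.e.\ precisely when $\pi_{n-1}((\bd_n\iota_n)(a)) = 0$. Hence $\im(\Omega_n X \to A_n X) = \ker(\pi_{n-1}\bd_n\iota_n)$, which is exactness at $A_n X$ and completes the argument. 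I do not anticipate a genuine obstacle here: the only points requiring care are the degenerate case $n = 0$ and the standard fact that a pullback of a monomorphism is a monomorphism (equivalently, that $b$ above is unique given $a$); everything else is a direct translation of the universal property of the pullback into elements.
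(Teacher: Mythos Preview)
Your proof is correct and is essentially the same argument as the paper's, just carried out elementwise rather than diagrammatically: the paper observes that the short exact sequence $0 \to A_{n-1} \to C_{n-1} \to C_{n-1}/A_{n-1} \to 0$ makes the right-hand square a pullback and then pastes it with the defining pullback for $\Omega_n$, which is the categorical packaging of your element chase. Your explicit treatment of the case $n=0$ is slightly more careful than the paper's presentation, but otherwise the content is identical.
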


\begin{proof}
    We first note that we have an exact sequence
    \[
        \begin{tikzcd}
            0 \rar & 
            A_{n-1} X \rar["\iota_{n-1}",hookrightarrow] &
            C_{n-1} X \rar["\pi_{n-1}"] &
            C_{n-1} X / A_{n-1} X  \rar &
            0
        \end{tikzcd}
    \]     
    This implies that the square on the right in the following diagram is a pullback (and a pushout, but we will not need this fact):
    \[
        \begin{tikzcd}
            \Omega_n \rar \dar[hookrightarrow]  \ar[dr, phantom, very near start, "\lrcorner"]
            &
            A_{n-1} \dar["\iota_{n-1}"', hookrightarrow] \rar \ar[dr, phantom, very near start, "\lrcorner"]
            & 
            0 \dar
            \\
            A_n \rar["\bd_n \circ \iota_n"']
            &
            C_{n-1} \rar["\pi_{n-1}"'] 
            & 
            C_{n-1}/A_{n-1}
        \end{tikzcd}
    \]
    We thus have a composite of two pullback squares, which is itself a pullback.
    The outer pullback square gives our desired exact sequence.
\end{proof}

The matrix representing the differential $\bd_n \colon \Omega_n X \to \Omega_{n-1} X$ is given by restricting the map $A_n X \xrightarrow{\iota_n} C_n X \xrightarrow{\bd_n} C_{n-1} X$ to $\Omega_n X \to \Omega_{n-1} X$ and expressing its matrix in terms of the bases for $\Omega_n X$ and $\Omega_{n-1} X$.
We compute the rank and nullity for the matrices of $\bd_1, \bd_2, \dots, \bd_K$, where $K$ is some pre-defined cut-off.
The nullity of $\bd_0$ is defined to be $\dim \Omega_{0} X$.
The dimensions of $H_n(X)$ for $0 \leq n < k$ are then given by
\[
    \dim H_n(X) = \text{nullity } \bd_n - \text{rank } \bd_{n+1}.
\]

\begin{example}
    Consider the subgraph obtained by removing the central vertex of $I_2 \gtimes I_2 \gtimes I_2$:
    \begin{figure}[H]
        \centering
        \begin{subfigure}[c]{0.5\textwidth}
        \begin{tikzpicture}[node distance=40pt]        
            \node(a) {$\bullet$};
            \node(b) [right=of a] {$\bullet$};
            \node(c) [right=of b] {$\bullet$};
            \node(d) [above=of a] {$\bullet$};
            \node(e) [right=of d] {$\bullet$};
            \node(f) [right=of e] {$\bullet$};
            \node(g) [above=of d] {$\bullet$};
            \node(h) [right=of g] {$\bullet$};
            \node(i) [right=of h] {$\bullet$};
            
            \node(j) [above right=10pt and 25pt of a]{$\bullet$};
            \node(k) [right=of j] {$\bullet$};
            \node(l) [right=of k] {$\bullet$};
            \node(m) [above=of j] {$\bullet$};
            \node(n) [above=of l] {$\bullet$};
            \node(o) [above=of m] {$\bullet$};
            \node(p) [right=of o] {$\bullet$};
            \node(q) [right=of p] {$\bullet$};     

            \node(r) [above right=10pt and 25pt of j]{$\bullet$};
            \node(s) [right=of r]{$\bullet$};
            \node(t) [right=of s] {$\bullet$};
            \node(u) [above=of r] {$\bullet$};
            \node(v) [right=of u] {$\bullet$};
            \node(w) [right=of v] {$\bullet$};
            \node(x) [above=of u] {$\bullet$};
            \node(y) [right=of x] {$\bullet$};
            \node(z) [right=of y] {$\bullet$};
            
            \draw[->] (a) to (b);
            \draw[->] (b) to (c);
            \draw[->] (d) to (e);
            \draw[->] (e) to (f);
            \draw[->] (g) to (h);
            \draw[->] (h) to (i);
            
            \draw[->] (a) to (d);
            \draw[->] (d) to (g);
            \draw[->] (b) to (e);
            \draw[->] (e) to (h);
            \draw[->] (c) to (f);
            \draw[->] (f) to (i);
            
            \draw[->, dotted] (j) to (k);
            \draw[->, dotted] (k) to (l);
            \draw[->] (o) to (p);
            \draw[->] (p) to (q); 
            
            \draw[->, dotted] (j) to (m);
            \draw[->, dotted] (m) to (o);
            \draw[->] (l) to (n);
            \draw[->] (n) to (q);
            
            \draw[->, dotted] (r) to (s);
            \draw[->, dotted] (s) to (t);
            \draw[->, dotted] (u) to (v);
            \draw[->, dotted] (v) to (w);
            \draw[->] (x) to (y);
            \draw[->] (y) to (z);
            
            \draw[->, dotted] (r) to (u);
            \draw[->, dotted] (u) to (x);
            \draw[->, dotted] (s) to (v);
            \draw[->, dotted] (v) to (y);
            \draw[->] (t) to (w);
            \draw[->] (w) to (z);
            
            \draw[->, dotted] (j) to (r);
            \draw[->, dotted] (a) to (j);
            \draw[->, dotted] (m) to (u);
            \draw[->, dotted] (d) to (m);
            \draw[->] (o) to (x);
            \draw[->] (g) to (o);
            
            \draw[->] (l) to (t);
            \draw[->] (c) to (l);
            \draw[->] (n) to (w);
            \draw[->] (f) to (n);
            \draw[->] (q) to (z);
            \draw[->] (i) to (q);
            
            \draw[->, dotted] (k) to (s);
            \draw[->, dotted] (b) to (k);
            \draw[->] (p) to (y);
            \draw[->] (h) to (p);
        \end{tikzpicture}
        \end{subfigure}
        \begin{subfigure}[c]{0.4\textwidth}
        \centering
        \begin{tabular}{||c||  c | c||} 
         \hline
         $l$ &  dim $\Omega_l$ & $H_l$ \\ [.5ex] 
         \hline\hline
        0 &  26 & $R$ \\ 
         \hline
         1 & 48 & 0 \\
         \hline
          2 & 24 & $R$ \\
         \hline
          3 & 0 & 0 \\
           \hline
          4 & 0 & 0 \\
         \hline
        \end{tabular}
        \end{subfigure}
    \end{figure}  
    Using our script, we calculate the dimensions of $\Omega_\bullet$ and $H_\bullet$, which are displayed in the table above.
    The 24 small square faces assemble to form a 2-dimensional ``hole'' that remains unfilled, since $\Omega_3$ is 0.
    This is witnessed by the non-zero $H_2$.
    By contrast $H_2(I_2 \gtimes I_2 \gtimes I_2) = 0$.
\end{example}

\subsection*{Cofibration categories}

We now introduce cofibration categories, a categorical framework for studying abstract homotopy theory.
The origin of this notion, or more precisely its formal dual, goes back to Brown's \emph{categories of fibrant objects} \cite{brown:abstract-homotopy-theory}, a notion that was introduced to study generalized sheaf cohomology.
Many variations on the definition have appeared since, notably in Baues' book \cite{baues:algebraic-homotopy} and Radulescu-Banu's Ph.D.~thesis \cite{radulescu-banu}.

Cofibration categories provide a way of speaking about homotopy theories with (finite) homotopy colimits.
This statement was made precise by Szumi{\l}o \cite{szumilo:two-models}, who showed that the homotopy theory of cofibration categories is equivalent to the homotopy theory of (finitely) cocomplete $(\infty, 1)$-categories.

\begin{definition}\label{cofib-cat-def}
  A \emph{cofibration category} consists of a category $\C$ together with two classes of maps in $\C$: \emph{cofibrations}, denoted $\cto$, and \emph{weak equivalences}, denoted $\weto$, subject to the following conditions (where by an \emph{acyclic cofibration} we mean a morphism that is both a cofibration and a weak equivalence):
  \begin{enumerate}
      \item[(C1)] For any object $X \in \C$, the identity map $\id[X]$ is an acyclic cofibration.
      Both cofibrations and weak equivalences are closed under composition.
      \item[(C2)] The class of weak equivalences satisfies the 2-out-of-6 property, i.e., given a triple of composable morphisms $f \from X \to Y$, $g \from Y \to Z$, and $h \from Z \to W$, if $gf$ and $hg$ are weak equivalences, then so are $f$, $g$, $h$, and $hgf$.
      \item[(C3)] The category $\C$ admits an initial object $\varnothing$ and for any object $X \in C$, the unique map $\varnothing \to X$ is a cofibration (i.e., all objects are \emph{cofibrant}).
      \item[(C4)] The category $\C$ admits pushouts along cofibrations.
      Moreover, the pushout of an (acyclic) cofibration is an (acyclic) cofibration.
      \item[(C5)] For any object $X \in \C$, the codiagonal map $X \sqcup X \to X$ can be factored as a cofibration followed by a weak equivalence.
      \item[(C6)] The category $\C$ has small coproducts.
      \item[(C7)] The transfinite composite of (acyclic) cofibrations is again an (acyclic) cofibration.
  \end{enumerate}
\end{definition}

The above definition most closely resembles the one given in \cite{szumilo:two-models} and is a slight strengthening of what might be found in \cite{brown:abstract-homotopy-theory,baues:algebraic-homotopy}.
For instance, we require in (C2) that weak equivalences satisfy the 2-out-of-6 property instead of the (perhaps more common) 2-out-of-3 property.
We recall that 2-out-of-6 implies 2-out-of-3.

\begin{lemma}
  Weak equivalences in any cofibration category satisfy the 2-out-of-3 property, i.e., given a composable pair of maps $f \from X \to Y$ and $g \from Y \to Z$, if any two of $f$, $g$, $gf$ are weak equivalences, then so is the third. 
  \qed
\end{lemma}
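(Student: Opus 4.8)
The plan is to reduce 2-out-of-3 to 2-out-of-6 by padding the composable pair $f, g$ with identity maps, which are weak equivalences by (C1) (indeed acyclic cofibrations, hence in particular weak equivalences). There are three cases according to which two of $f$, $g$, $gf$ are assumed to be weak equivalences, and I would handle them in turn.

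First, if $f$ and $g$ are weak equivalences, then $gf$ is a weak equivalence immediately from the closure of weak equivalences under composition in (C1); no appeal to 2-out-of-6 is needed here. Second, suppose $g$ and $gf$ are weak equivalences and we wish to conclude that $f$ is. Apply the 2-out-of-6 axiom (C2) to the composable triple
\[
X \xrightarrow{\ f\ } Y \xrightarrow{\ g\ } Z \xrightarrow{\ \id[Z]\ } Z .
\]
Its two length-two composites are $g \circ f = gf$ and $\id[Z] \circ g = g$, both weak equivalences by hypothesis; hence by (C2) all of $f$, $g$, $\id[Z]$, and the total composite are weak equivalences, and in particular $f$ is. Third, suppose $f$ and $gf$ are weak equivalences and we wish to conclude that $g$ is. Apply (C2) to the composable triple
\[
X \xrightarrow{\ \id[X]\ } X \xrightarrow{\ f\ } Y \xrightarrow{\ g\ } Z ,
\]
whose two length-two composites are $f \circ \id[X] = f$ and $g \circ f = gf$, again both weak equivalences; hence by (C2) all four maps in sight are weak equivalences, and in particular $g$ is.

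There is no real obstacle: the only ingredient beyond (C2) is the observation that identity maps are weak equivalences, supplied by (C1), together with closure under composition for the first case. The argument is purely formal and does not use any of the other cofibration-category axioms.

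\begin{proof}
If $f$ and $g$ are weak equivalences, then so is $gf$ by (C1). If $g$ and $gf$ are weak equivalences, apply the 2-out-of-6 property (C2) to the triple $X \xrightarrow{f} Y \xrightarrow{g} Z \xrightarrow{\id[Z]} Z$: its length-two composites $gf$ and $\id[Z]\circ g = g$ are weak equivalences, so $f$ is a weak equivalence. If $f$ and $gf$ are weak equivalences, apply (C2) to the triple $X \xrightarrow{\id[X]} X \xrightarrow{f} Y \xrightarrow{g} Z$: its length-two composites $f\circ\id[X] = f$ and $gf$ are weak equivalences, so $g$ is a weak equivalence. In all cases we have used that identity maps are weak equivalences, which follows from (C1).
\end{proof}
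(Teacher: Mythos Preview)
Your proof is correct and is the standard argument that 2-out-of-6 implies 2-out-of-3. The paper does not give a proof at all---the statement is followed immediately by \qed---so there is nothing to compare beyond noting that your argument is exactly the expected one.
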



Furthermore, in the presence of axioms (C1), (C3), (C4), and (C5), 2-out-of-6 is equivalent to 2-out-of-3 with an additional requirement that the class of weak equivalences is saturated, i.e., that maps inverted when passing to the homotopy category are exactly the weak equivalences (this result is due to Cisinski, cf.~\cite[Thm.~7.2.7]{radulescu-banu}).

The seven axioms presented above fall into two categories: axioms (C1)--(C5) correspond to finite homotopy colimits, whereas axioms (C6) and (C7) say that the homotopy theory additionally admits infinite homotopy colimits.

Before discussing examples of cofibration categories, we briefly record two consequences of the axioms.
The first shows that the axiom (C5) can be strengthened to ask for factorizations of arbitrary maps rather than just the codiagonal morphism.

\begin{lemma}[Factorization Lemma, {\cite[p.~421]{brown:abstract-homotopy-theory}}]
  Every map $f$ in $\C$ can be factored as $f = w i$ where $i$ is a cofibration and $w$ is a weak equivalence.
\end{lemma}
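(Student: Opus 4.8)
The plan is to adapt the classical topological mapping-cylinder factorization. Given $f \from X \to Y$, I will build a cylinder object $IX$ on $X$ out of axiom (C5), form the mapping cylinder $Mf$ as a pushout of $IX \leftarrow X \xrightarrow{f} Y$, and then factor $f$ as $X \cto Mf \weto Y$.

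First I would produce the cylinder. Applying (C5) to $X$, write the codiagonal $X \sqcup X \to X$ as $X \sqcup X \xrightarrow{c} IX \xrightarrow{\sigma} X$ with $c$ a cofibration and $\sigma$ a weak equivalence, and set $i_0 = c\iota_0$, $i_1 = c\iota_1$, where $\iota_0, \iota_1 \from X \to X \sqcup X$ are the coproduct injections. Each $\iota_k$ is the pushout of $\varnothing \cto X$ (using (C3) and (C4)), hence a cofibration, so each $i_k$ is a cofibration; and $\sigma i_k = \id[X]$ together with the 2-out-of-3 property (established above) shows that each $i_k$ is a weak equivalence. Thus $i_0$ and $i_1$ are acyclic cofibrations.

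Next, form $Mf$ as the pushout of $Y \xleftarrow{f} X \xrightarrow{i_0} IX$; this exists by (C4) since $i_0$ is a cofibration, and the structure map $\bar f \from Y \to Mf$ is an acyclic cofibration, being the pushout of the acyclic cofibration $i_0$. Let $k \from IX \to Mf$ be the other structure map. The maps $\id[Y] \from Y \to Y$ and $f\sigma \from IX \to Y$ agree after restriction along $f$ and $i_0$ respectively (both become $f$), so they induce $p \from Mf \to Y$ with $p\bar f = \id[Y]$ and $pk = f\sigma$; since $\bar f$ and $p\bar f$ are weak equivalences, $p$ is a weak equivalence by 2-out-of-3. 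Now $p(ki_1) = f\sigma i_1 = f$, so $f = p \circ (ki_1)$, and it remains to show that $ki_1 \from X \to Mf$ is a cofibration.

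This last point is the step I expect to require the most care, because $k$ itself need not be a cofibration. The trick is to recognize $Mf$, together with $k$ and the map $Y \sqcup X \to Mf$ having components $\bar f$ and $ki_1$, as the pushout of the span $IX \xleftarrow{c} X \sqcup X \xrightarrow{f \sqcup \id[X]} Y \sqcup X$ (recall $c$ has components $i_0$ and $i_1$); one checks this directly against the universal property of $Mf$, the point being that gluing along the $\id[X]$-component of $f \sqcup \id[X]$ contributes nothing new. In this presentation the map $Y \sqcup X \to Mf$ is the pushout of the cofibration $c$ along $f \sqcup \id[X]$, hence a cofibration by (C4); precomposing with the coproduct injection $X \to Y \sqcup X$, which is a cofibration (pushout of $\varnothing \cto Y$), exhibits $ki_1$ as a composite of cofibrations, hence a cofibration. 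Therefore $f = p \circ (ki_1)$ is the desired factorization, with $ki_1$ a cofibration and $p$ a weak equivalence.
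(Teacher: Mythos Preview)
The paper does not supply its own proof of this lemma; it simply cites Brown \cite[p.~421]{brown:abstract-homotopy-theory}. Your argument is precisely the classical mapping-cylinder factorization that Brown gives (dualized from fibrations to cofibrations), and it is correct as written: the delicate point---that the composite $ki_1 \from X \to Mf$ is a cofibration even though $k$ need not be---is handled exactly as it should be, by re-expressing $Mf$ as the pushout of $c \from X \sqcup X \cto IX$ along $f \sqcup \id[X]$ so that $Y \sqcup X \to Mf$ becomes a pushout of the cofibration $c$.
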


\begin{lemma}[Left Properness, {\cite[Lem.~I.4.2]{brown:abstract-homotopy-theory}}] \label{left-properness}
  The pushout of a weak equivalence along a cofibration is again a weak equivalence.
\end{lemma}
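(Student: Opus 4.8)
The plan is to dualize Brown's proof of \cite[Lem.~I.4.2]{brown:abstract-homotopy-theory}: first a formal reduction, via the mapping cylinder of the weak equivalence, to the case of a very special weak equivalence, and then the homotopy calculus of the cofibration category for that case. Write the given data as a cofibration $i \colon A \cto C$ and a weak equivalence $w \colon A \weto B$, with pushout $D := C \sqcup_A B$ and induced map $v \colon C \to D$; the goal is that $v$ is a weak equivalence.

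\emph{Mapping cylinder.} By (C5) (or the Factorization Lemma) factor the codiagonal as $A \sqcup A \cto \mathrm{Cyl}\,A \xrightarrow{\sigma} A$ with $\sigma$ a weak equivalence. Each inclusion $\iota_\epsilon \colon A \cto \mathrm{Cyl}\,A$ ($\epsilon \in \{0,1\}$) is a cofibration — a coproduct inclusion, itself a cofibration by (C3) and (C4), followed by $A \sqcup A \cto \mathrm{Cyl}\,A$ — and, since $\sigma\iota_\epsilon = \id[A]$, a weak equivalence by 2-out-of-3, hence an acyclic cofibration. Let $M := B \sqcup_A \mathrm{Cyl}\,A$ be the pushout of $\iota_0$ along $w$ (it exists by (C4)). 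The leg $b \colon B \to M$, being a pushout of the acyclic cofibration $\iota_0$, is an acyclic cofibration by (C4); the composite $k \colon A \xrightarrow{\iota_1} \mathrm{Cyl}\,A \to M$ is a cofibration (a routine check using (C4), identifying $M$ with a pushout of $A \sqcup A \cto \mathrm{Cyl}\,A$); and the map $r \colon M \to B$ induced by $\id[B]$ and $w\sigma$ satisfies $r b = \id[B]$ and $r k = w$. Since $b$ is a weak equivalence, $r$ is a weak equivalence by 2-out-of-3, and then so is $k$. Thus $w = r k$ with $k$ an acyclic cofibration and $r$ a weak equivalence that admits the acyclic cofibration $b$ as a section.

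\emph{Reduction.} Push $k$ out along $i$: by (C4) this yields an acyclic cofibration $k' \colon C \cto C'$, where $C' := C \sqcup_A M$, together with a cofibration $i' \colon M \cto C'$. By the pasting law for pushouts, $D \cong C' \sqcup_M B$ is the pushout of $r$ along $i'$, and $v$ factors as $C \xrightarrow{k'} C' \xrightarrow{v'} D$ with $v'$ the pushout of $r$. Since $k'$ is a weak equivalence, 2-out-of-3 reduces the lemma to: the pushout of $r$ along the cofibration $i'$ is a weak equivalence. In other words, it suffices to prove the statement for weak equivalences that are retractions of acyclic cofibrations — a genuine restriction of the problem.

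\emph{The core, and the main obstacle.} It remains to show that if $g \colon M \cto V$ is a cofibration and $r \colon M \weto B$ is a weak equivalence admitting an acyclic cofibration $b \colon B \cto M$ as a section, then the pushout $p \colon V \to V \sqcup_M B$ of $r$ along $g$ is a weak equivalence. This is the homotopy-theoretic heart of the argument — precisely (the dual of) \cite[Lem.~I.4.2]{brown:abstract-homotopy-theory} — and where the real work is. Since $b$ is an acyclic cofibration, the fold map $M \sqcup_B M \to M$ has both legs acyclic cofibrations by (C4), so factoring it gives a cylinder for $M$ relative to $B$; using the explicit description $M = B \sqcup_A \mathrm{Cyl}\,A$ one checks that the idempotent $b r \colon M \to M$ is left-homotopic to $\id[M]$ relative to $B$; and one then shows that homotopy equivalences relative to $B$ are weak equivalences and remain so after pushout along a cofibration — the analogue of Brown's fibrewise-homotopy step. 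Feeding these facts back, together with $r b = \id[B]$, yields that $p$, and hence $v$, is a weak equivalence. Everything preceding this step is purely formal, using only (C4), (C5), and 2-out-of-3; the obstacle is exactly isolating and proving this fragment of the homotopy calculus of $\C$ (relative cylinders, homotopies relative to an object, homotopy-invariance of pushouts along cofibrations), or, equivalently, dualizing Brown's proof verbatim.
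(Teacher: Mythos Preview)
The paper does not prove this lemma; it simply cites Brown. Your proposal is a faithful dualization of Brown's argument, and your reduction via the mapping cylinder is correct: the factorization $w = rk$ with $k$ an acyclic cofibration and $r$ a retraction of an acyclic cofibration $b$ is exactly Brown's construction, and the pushout-pasting reduction to the case of such an $r$ is valid. Your self-assessment of where the content lies is accurate.

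However, the ``core'' step is not proved. The claims that $br \sim \id[M]$ relative to $B$ and that relative homotopy equivalences are stable under pushout along cofibrations are precisely the substance of Brown's lemma; asserting them with ``one checks'' and ``one then shows'' amounts to re-citing Brown at the crucial moment. To complete the argument you would need to (i) exhibit a cylinder on $M$ relative to $B$ together with an explicit homotopy $br \sim \id[M]$, and (ii) verify that such a relative cylinder, and hence the homotopy, pushes forward along the cofibration $i' \colon M \cto C'$. Neither step is hard, but neither is supplied. As written, your proposal is a correct and well-organized outline that stops one step short of a complete proof---ending, as you yourself note, in a deferral back to Brown.
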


Readers familiar with homotopical algebra may recoginize that a large class of examples of cofibration categories come from model categories.
The latter, introduced by Quillen \cite{quillen:book}, were the first known way of abstractly capturing what a ``homotopy theory'' is.
In brief, a model category is a complete and cocomplete category $\C$ equipped with three classes of morphisms: cofibrations, fibrations, and weak equivalences subject to axioms similar to, although stronger than, the ones of a cofibration category.
Given a model category $\C$, its full subcategory of cofibrant objects (i.e., objects $X$ for which the map $\varnothing \to X$ is a cofibration) is a cofibration category.
However, we shall not be concerned with model categories in this paper.

\begin{example} \label{ex:top-cofib-cat}
  The category $\Top$ of topological spaces and some of its subcategories carry several interesting cofibration category structures:
  \begin{itemize}
      \item The \emph{Hurewicz cofibration category} structure is defined on the category of all spaces.
      Its weak equivalences are the homotopy equivalences and its cofibrations are Hurewicz cofibrations, i.e., maps $i \from A \cto X$ such that, for any space $S$, any commutative square of the form
      \[ \begin{tikzcd}
        A \arrow[r] \arrow[d,tail] & S^{[0,1]} \arrow[d] \\
        X \arrow[r] & S \\
      \end{tikzcd} \]
      where the right hand map is the evaluation at $0$ admits a diagonal filler, i.e., there is a map $X \to S^{[0,1]}$ making both triangles commute.
      (This cofibration category structure arises from the Hurewicz model structure on $\Top$, cf.~\cite[Thm.~3]{strom}.)
      \item The \emph{Serre cofibration category} structure is defined on the category of retracts of CW-complexes.
      Its weak equivalences are weak homotopy equivalences, i.e., maps inducing isomorphisms on homotopy groups, and its cofibrations are retracts of CW-inclusions.
      (This cofibration category structure arises from the Serre model structure on $\Top$, cf.~\cite[Thm.~2.4.19]{hovey:book}.)
      \item The \emph{Dold cofibration category} structure is defined on the category of all spaces.
      Its weak equivalences are the homotopy equivalences and its cofibrations are \emph{Dold cofibrations}, i.e., maps $A \cto X$ satisfying the following weak homotopy extension condition: for any space $S$, every commutative square of the form
      \[ \begin{tikzcd}
        A \arrow[r] \arrow[d,tail] & S^{[0,1]} \arrow[d] \\
        X \arrow[r] & S \\
      \end{tikzcd} \]
      admits a diagonal filler making the upper triangle commute strictly and the lower triangle commute up to a homotopy relative to $A$.
      (This cofibration category structure does {\em not} arise from a model structure, cf.~\cite[Thm.~1.32.(2)]{szumilo:two-models}.)
  \end{itemize}
\end{example}

\begin{example}\label{Ch-cof-cat}
  For chain complexes $\Ch_R$ over a unital ring $R$:
  \begin{itemize}
      \item The \emph{injective cofibration category} structure is defined on all chain complexes $\Ch_R$. Its weak equivalences are quasi-isomorphisms, i.e., maps inducing isomorphisms on all homology groups, and its cofibrations are monomorphisms (cf.~\cite[Thm.~2.3.13]{hovey:book}).
      \item The \emph{projective cofibration category} structure is defined on chain complexes of projective $R$-modules $\Ch_R^\proj$. Its weak equivalences are once again the quasi-isomorphisms and its cofibrations are monomorphisms with degree-wise projective cokernels (cf.~\cite[Thm.~2.3.11]{hovey:book}).
  \end{itemize}
\end{example}

\begin{definition}
  A functor $F \from \C \to \D$ between cofibration categories is \emph{exact} if it preserves cofibrations, acyclic cofibrations, the initial object, pushouts along cofibrations, coproducts, and transfinite composites of cofibrations.
\end{definition}

\begin{remark} \label{rem:ken-brown}
    It follows by Ken Brown's Lemma \cite[Lem.~1.1.12]{hovey:book} that exact functors preserve weak equivalences.
\end{remark}

\begin{example}\label{Ch-inclusion-exact}
  The inclusion $\Ch_R^\proj \into \Ch_R$ is an exact functor from the projective cofibration category of chain complexes to the injective one.
\end{example}

\begin{example}
  The singular chain complex functor $C_\bullet  \from \Top \to \Ch_\mathbb{Z}$ taking a topological space to its singular complex is an exact functor.
  Here, $\Top$ is considered with the Serre cofibration category structure and $\Ch_\mathbb{Z}$ is considered with the injective cofibration category structure.
  It can also be noted that the functor takes values in the category $\Ch_\mathbb{Z}^\mathsf{proj}$ and could also be considered as an exact functor into the projective cofibration category structure.
\end{example}

\section{Cofibrations of directed graphs} \label{sec:cofibs}

In this section, we define a suitable notion of cofibration of directed graphs (\cref{cofib-def}) and prove some closure properties of these cofibrations (\cref{cof-wide-subcat} through \cref{closed-under-transf-comp}). 
Many of these properties correspond to cofibration category axioms.
Stability of acyclic cofibrations under pushout requires a proof of the excision axiom, to which we dedicate \cref{sec:excision}.
We defer the full proof that our cofibrations and weak equivalences comprise a cofibration category structure on $\DiGraph$ until \cref{sec:main-thm}. 
Unless otherwise noted, $X$ and $A$ (resp.~$Y$ and $B$, $X'$ and $A'$) will refer to a graph and an induced subgraph, respectively.

\subsection*{Projecting decompositions}

We begin this section by adapting the notion of projecting decomposition (cf.~\cite[Def.~4.6]{leinster:magnitude-graph} and \cite[Def.~26]{hepworth-willerton}) to the setting of directed graphs.


\begin{definition}
For a directed graph $X$ with an induced subgraph $A$, let $X^A$ denote the induced subgraph on the set of vertices of $X$ which admit a path to some vertex of $A$.     
\end{definition}

\begin{definition}
For a vertex $x$ of $X^A$, the \emph{height} of $x$, denoted $h(x)$, is the minimal length of a path from $x$ to a vertex of $A$.
\end{definition}

\begin{definition}\label{proj-decomp-def}
A \emph{projecting decomposition} of $X$ with respect to $A$ is a function $\pi \colon X_{V}^A \to A_V$ such that, for any $x \in X_V$ and any $a \in A_V$ admitting a path from $x$, there is a path from $x$ to $a$ of minimal length which passes through $\pi x$.
\end{definition}

\begin{example}\label{ex:proj-decomp}
Let $X$ be the cycle $C_{3,1}$ from \cref{def:Cmn}.
If $A$ is the edge $0-1$ (\Cref{C31-01}), the only vertices in $X$ admitting a path to $A$ are those in $A$ itself, so $X_{V}^A = A_V$ and the identity function is a projecting decomposition.

If $A$ is the edge $1-2$ (\Cref{C31-12}), $X^A$ is the induced subgraph on the vertices $0,1$ and $2$, with a projecting decomposition sending $0$ and $1$ to $1$, and $2$ to itself.

If $A$ is the edge $2-3$ (\Cref{C31-23}), $X^A$ is the whole of $X$. However, $X$ admits no projecting decomposition $\pi$ to $A$: if $\pi(0) = 2$, the unique path of minimal length $1$ from $0$ to $3$ does not pass through $2$; and if $\pi(0) = 3$, the unique path of minimal length $2$ from $0$ to $2$ does not pass through $3$.

    \begin{figure}[H]
        \centering
        \begin{subfigure}[c]{0.3\textwidth}
            \centering
            \begin{tikzpicture}[node distance=40pt, halo/.style={line join=round, double, line cap=round,double distance=18pt,shorten >=-6pt,shorten <=-6pt}]
                \node(0) [red] {$\bullet$};
                \node(1) [right=of 0, red] {$\bullet$};
                \node(2) [below=of 1] {$\bullet$};
                \node(3) [below=of 0] {$\bullet$};
                
                \node[left=1pt of 0, red] {$0$};
                \node[right=1pt of 1, red] {$1$};
                \node[right=1pt of 2] {$2$};   
                \node[left=1pt of 3] {$3$};   
                
                \draw[->, red] (0) to (1);
                \draw[->] (1) to (2);
                \draw[->] (2) to (3);
                \draw[->] (0) to (3);

                \begin{scope}[on background layer]
                \draw[halo, blue] (0) -- (1);   
                \end{scope}      
            \end{tikzpicture}
            \caption{}
            \label{C31-01}
        \end{subfigure}
        \begin{subfigure}[c]{0.3\textwidth}
            \centering
            \begin{tikzpicture}[node distance=40pt, halo/.style={line join=round, double, line cap=round,double distance=18pt,shorten >=-6pt,shorten <=-6pt}]
                \node(0) {$\bullet$};
                \node(1) [right=of 0, red] {$\bullet$};
                \node(2) [below=of 1, red] {$\bullet$};
                \node(3) [below=of 0] {$\bullet$};
                
                \node[left=2pt of 0] {$0$};
                \node[right=2pt of 1, red] {$1$};
                \node[right=2pt of 2, red] {$2$};   
                \node[left=2pt of 3] {$3$};   
                
                \draw[->] (0) to (1);
                \draw[->, red] (1) to (2);
                \draw[->] (2) to (3);
                \draw[->] (0) to (3);

                \begin{scope}[on background layer]
                \node(22) [below=44pt of 1] {};
                \node(11) [above=42pt of 2] {};                
                \draw[halo, blue] (22) -- (11) -- (1) -- (0);
                \end{scope}                
            \end{tikzpicture}
            \caption{}
            \label{C31-12}
        \end{subfigure}
        \begin{subfigure}[c]{0.3\textwidth}
            \centering
            \begin{tikzpicture}[node distance=40pt]
                \node(0) {$\bullet$};
                \node(00)[above left=1pt of 0] {};
                \node(1) [right=of 0] {$\bullet$};
                \node(2) [below=of 1, red] {$\bullet$};
                \node(22) [below right=1pt of 2] {};
                \node(3) [below=of 0, red] {$\bullet$};
                
                \node[left=3pt of 0] {$0$};
                \node[right=3pt of 1] {$1$};
                \node[right=3pt of 2, red] {$2$};   
                \node[left=3pt of 3, red] {$3$};   
                
                \draw[->] (0) to (1);
                \draw[->] (1) to (2);
                \draw[->, red] (2) to (3);
                \draw[->] (0) to (3);

                \begin{scope}[on background layer]
                    \draw[rounded corners=10pt, blue] (00) rectangle (22) {};
                \end{scope}                
            \end{tikzpicture}
            \caption{}
            \label{C31-23}
        \end{subfigure}
        \caption{The graph $X = C_{3,1}$ with induced subgraphs $A$ (in red) and corresponding $X^A$ (circled in blue). Only the first two examples admit projecting decompositions.}
    \end{figure} 

\end{example}

We now explore some basic consequences of the definition of a projecting decomposition.

\begin{lemma}\label{pi-closest}
If $X$ admits a projecting decomposition with respect to $A$, then for any $x \in X_{V}^A$, $\pi x$ is the unique vertex of $A$ which is closest to $x$, \ie{} the unique vertex of $A$ admitting a path of length $h(x)$.

In particular, if $x \in A_V$, then $\pi x = x$, while if $h(x) = 1$ then $\pi x$ is the unique vertex of $A$ admitting an edge from $x$.
\end{lemma}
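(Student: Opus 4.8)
We need to show: if $X$ admits a projecting decomposition $\pi$ with respect to $A$, then for any $x \in X_V^A$, the vertex $\pi x$ is the unique vertex of $A$ admitting a path from $x$ of length $h(x)$ (and no vertex of $A$ is closer).

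Let me think about what we have. By definition of $X^A$, the vertex $x$ admits a path to some vertex of $A$, so $h(x)$ is well-defined as the minimal length of such a path. Let $a_0 \in A_V$ be a vertex realizing this minimum, i.e., there is a path $x \to a_0$ of length $h(x)$. Since $a_0$ admits a path from $x$, the defining property of a projecting decomposition gives a path from $x$ to $a_0$ of minimal length passing through $\pi x$. "Minimal length" here should mean minimal among paths from $x$ to $a_0$; but any path from $x$ to $a_0$ has length at least $h(x)$, and we have one of length exactly $h(x)$, so the minimal length of a path $x \to a_0$ is $h(x)$. Thus there is a path $x \to a_0$ of length $h(x)$ passing through $\pi x$.

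First key step: **$\pi x$ itself is at distance $h(x)$ from $x$, actually $\pi x = a_0$ and the portion of the path before $\pi x$ has length zero.** The path $x \to a_0$ of length $h(x)$ factors as $x \to \pi x \to a_0$. The sub-path $x \to \pi x$ has some length $\ell_1 \geq 0$ and $\pi x \to a_0$ has length $\ell_2 \geq 0$ with $\ell_1 + \ell_2 = h(x)$. Now $\pi x \in A_V$, so $\pi x$ is a vertex of $A$ admitting a path from $x$ of length $\ell_1$; by minimality of $h(x)$ we get $\ell_1 \geq h(x)$, forcing $\ell_1 = h(x)$ and $\ell_2 = 0$, i.e. $\pi x = a_0$. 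So $\pi x$ is a closest vertex of $A$ to $x$, realizing the distance $h(x)$.

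Second key step: **uniqueness.** Suppose $a \in A_V$ also admits a path from $x$ of length $h(x)$ (so $a$ is also a closest vertex). Apply the projecting-decomposition property to this $a$: there is a path $x \to a$ of minimal length (which is $h(x)$, by the same reasoning as above) passing through $\pi x$. Write it as $x \to \pi x \to a$ with lengths $m_1 + m_2 = h(x)$; as before $m_1 \geq h(x)$ since $\pi x \in A_V$, so $m_1 = h(x)$, $m_2 = 0$, hence $a = \pi x$. This gives uniqueness. The "in particular" clauses then follow immediately: if $x \in A_V$ then $h(x) = 0$ and the unique closest vertex is $x$ itself, so $\pi x = x$; if $h(x) = 1$ then the closest vertices of $A$ are exactly those with an edge from $x$, so $\pi x$ is the unique such vertex.

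I expect the only subtlety — not really an obstacle — is pinning down the phrase "a path from $x$ to $a$ of minimal length" in Definition \ref{proj-decomp-def}: it must mean minimal among all $x$-to-$a$ paths, and one must observe that this minimal length coincides with $h(x)$ whenever $a$ is a closest vertex. Once that is unwound, the argument is just the repeated use of minimality of $h(x)$ against the fact that $\pi x \in A_V$. No real difficulty; the proof is short.
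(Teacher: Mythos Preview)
Your proof is correct and follows essentially the same approach as the paper: both arguments take a vertex $a \in A_V$ at distance $h(x)$, use the projecting-decomposition property to produce a minimal-length path $x \to a$ through $\pi x$, and then use $\pi x \in A_V$ together with minimality of $h(x)$ to force $\pi x = a$. The paper does this in one stroke (showing any closest vertex equals $\pi x$), whereas you split it into an existence step and a uniqueness step, but the underlying argument is identical.
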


\begin{proof}
Suppose that $a \in A_V$ admits a path of length $h(x)$ from $x$ to $a$. By the definition of a projecting decomposition, there exists a path of length $h(x)$ from $x$ to $a$ which passes through $\pi x$. As $\pi x \in A_V$, the minimality of $h(x)$ implies that $\pi x$ must be the final vertex of this path and in fact $\pi x = a$.
\end{proof}

\begin{corollary}
If $X$ admits a projecting decomposition with respect to $A$, then it is unique. \qed
\end{corollary}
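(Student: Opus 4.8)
The plan is to read off uniqueness immediately from \cref{pi-closest}, which has already done all the work. Suppose $\pi, \pi' \colon X_V^A \to A_V$ are both projecting decompositions of $X$ with respect to $A$, and fix a vertex $x \in X_V^A$. Applying \cref{pi-closest} to $\pi$ shows that $\pi x$ is the unique vertex of $A$ admitting a path of length $h(x)$ from $x$; applying the same lemma to $\pi'$ shows that $\pi' x$ satisfies exactly the same characterization. Since this description of the value at $x$ refers only to $X$, $A$, and $x$, and not to the chosen projecting decomposition, we conclude $\pi x = \pi' x$. As $x \in X_V^A$ was arbitrary, $\pi = \pi'$.

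The only point to check is that \cref{pi-closest} genuinely pins down $\pi x$ without circular reference to $\pi$, and it does: the conclusion of that lemma identifies $\pi x$ with ``the unique vertex of $A$ at distance $h(x)$ from $x$'', a quantity defined purely in terms of the graph $X$ and its induced subgraph $A$. Consequently there is no real obstacle here --- the content of the corollary is entirely subsumed by the preceding lemma --- and I expect the proof to be a single short paragraph along the lines above.
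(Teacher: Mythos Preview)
Your proposal is correct and matches the paper's intent exactly: the corollary is marked with \qed and given no explicit proof, precisely because it follows immediately from \cref{pi-closest} in the way you describe.
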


\begin{remark}\label{rmk:projecting_alternative} We will sometimes view a projecting decomposition $\pi$ as a function on $X^A_V \setminus A_V$ rather than $X^A_V$, when this suits our computational purposes. This abuse of terminology is justified by \cref{pi-closest}, which implies that a function on $X^A_V \setminus A_V$ satisfying the criteria of \cref{proj-decomp-def} extends uniquely to a projecting decomposition of $X$ with respect to $A$, by setting $\pi a = a$ for all $a \in A_V$.
\end{remark}

\begin{lemma}\label{pi-edges}
Suppose $X$ admits a projecting decomposition with respect to $A$.
Let $x \to y$ denote an edge of $X$, with both $x$ and $y$ admitting paths to $A$, and $h(x) \geq h(y)$.
Then one of the following conditions holds:
\begin{enumerate}
    \item \label{pi-edges-equal} $h(x) = h(y)$ and there exists an edge $\pi x \to \pi y$;
    \item \label{pi-edges-greater} $h(x) = h(y) + 1$ and $\pi x = \pi y$.
\end{enumerate}
\end{lemma}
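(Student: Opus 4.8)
\emph{Plan.} The proof is a bookkeeping argument with path lengths, resting on two applications of \cref{pi-closest} and one application of the defining property in \cref{proj-decomp-def}. The first step is to pin down $h(x)$ relative to $h(y)$: since $y$ admits a path to $A$ of length $h(y)$, prepending the edge $x \to y$ yields a path from $x$ to $A$ of length $h(y) + 1$, so $h(x) \le h(y) + 1$; together with the hypothesis $h(x) \ge h(y)$ this forces $h(x) = h(y)$ or $h(x) = h(y) + 1$. I then treat the two possibilities separately, noting throughout that if $\pi x = \pi y$ then the required edge $\pi x \to \pi y$ is the self-loop and there is nothing to prove.

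Suppose first $h(x) = h(y) + 1$. By \cref{pi-closest}, $y$ admits a path of length $h(y)$ to $\pi y$, and prepending $x \to y$ gives a path of length $h(y) + 1 = h(x)$ from $x$ to $\pi y \in A_V$. Since $\pi y \in A_V$, no path from $x$ to $\pi y$ is shorter than $h(x)$, so this path realizes the distance from $x$ to $\pi y$; by the uniqueness clause of \cref{pi-closest}, $\pi y$ must therefore equal $\pi x$, which is the second alternative.

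Now suppose $h(x) = h(y) =: n$. Exactly as above there is a path of length $n + 1$ from $x$ to $\pi y$, and since $\pi y \in A_V$ every such path has length at least $h(x) = n$; hence the minimal length $m$ of a path from $x$ to $\pi y$ is either $n$ or $n + 1$. If $m = n$, then $\pi y$ is a vertex of $A$ at distance $h(x)$ from $x$, so $\pi y = \pi x$ by \cref{pi-closest}. If $m = n + 1$, apply \cref{proj-decomp-def} to $x$ and the vertex $\pi y$: there is a path of length $n + 1$ from $x$ to $\pi y$ passing through $\pi x$. Splitting it at an occurrence of $\pi x$ into a prefix of length $p$ from $x$ to $\pi x$ and a suffix of length $q = n + 1 - p$ from $\pi x$ to $\pi y$, we have $p \ge h(x) = n$ since $\pi x \in A_V$, and $q > 0$ since $q = 0$ would force $\pi x = \pi y$ and hence $m = n$, contrary to assumption; thus $p = n$, $q = 1$, and the suffix is the desired edge $\pi x \to \pi y$. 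This gives the first alternative. (The degenerate sub-case $x \in A_V$, where $h(x) = 0$ forces $h(y) = 0$, $\pi x = x$, $\pi y = y$, is subsumed by the $m = n$ analysis.)

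I do not expect a genuine obstacle: the only point requiring care is the sub-case $m = n + 1$ of the last paragraph, where one must invoke the ``passes through $\pi x$'' clause of \cref{proj-decomp-def}, rather than merely \cref{pi-closest}, and rule out the degenerate splitting $q = 0$.
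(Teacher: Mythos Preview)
Your proof is correct and follows essentially the same route as the paper's: both arguments bound $h(x) \le h(y)+1$ by prepending the edge $x\to y$ to a shortest path from $y$, then invoke \cref{pi-closest} for the uniqueness of the closest $A$-vertex and \cref{proj-decomp-def} to obtain a minimal path through $\pi x$ whose tail yields the edge $\pi x \to \pi y$. The only difference is organizational---you case first on whether $h(x)=h(y)$ or $h(x)=h(y)+1$, whereas the paper cases first on the length of the minimal path from $x$ to $\pi y$---but the content is the same.
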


\begin{proof}
If $x = y$ then condition \ref{pi-edges-equal} is trivially satisfied, so assume otherwise. We first note that by \cref{pi-closest}, there exists a path of length $h(y)$ from $y$ to $\pi y$. Concatenating this path with the edge $x \to y$, we obtain a path from $x$ to $\pi y$ of length $h(y) + 1$. It thus follows that $h(x) \leq h(y) + 1$. 

Moreover, by the definition of a projecting decomposition, there is a path of minimal length from $x$ to $\pi y$ which passes through $\pi x$; the length of this path must be less than or equal to $h(y) + 1$. The assumption that $h(x) \geq h(y)$ then implies that its length is either $h(y)$ or $h(y) + 1$.

First, suppose the length of this path is $h(y)$. Then $h(x) \leq h(y)$; together with our assumption on $h(x)$ this implies $h(x) = h(y)$. By \cref{pi-closest} we see that $\pi x = \pi y$, so that condition \ref{pi-edges-equal} is satisfied in this case.

Next suppose the length of the chosen path from $x$ to $\pi y$ is $h(y) + 1$. If $\pi x = \pi y$, then this implies that $h(x) = h(y) + 1$, so that condition \ref{pi-edges-greater} is satisfied. Otherwise, consider the initial segment of this path from $x$ to $\pi x$. As the path's length is minimal, this initial segment must have length $h(x)$. By our assumption that $\pi x \neq \pi y$, this initial segment is not the entire path; thus we see that $h(x) \leq h(y)$, once again implying $h(x) = h(y)$. As the length of the entire path is $h(y) + 1$, the path consists of this initial segment concatenated with an edge from $\pi x$ to $\pi y$. Thus condition \ref{pi-edges-equal} is satisfied in this case.
\end{proof}

\subsection*{Definition of cofibrations}

We now define the cofibrations which will be our objects of study.

\begin{definition}\label{cofib-def}
A \emph{cofibration} of directed graphs is an induced subgraph inclusion $A \cto X$ satisfying the following two conditions:
\begin{itemize}
    \item there are no edges out of $A$, \ie{} no edges from the vertices of $A$ to those not contained in $A$;
    \item $X$ admits a projecting decomposition with respect to $A$.
\end{itemize}
\end{definition}

\begin{example}\label{ex:cofib}
The inclusion of the edge $2-3$ in the commuting square $C_{2,2}$ is a cofibration: there are no edges out of $2-3$, and the map $0 \mapsto 2, 1 \mapsto 3$ is a projecting decomposition.

    \begin{figure}[H]
        \centering
        \begin{tikzpicture}[node distance=40pt, halo/.style={line join=round, double, line cap=round,double distance=18pt,shorten >=-6pt,shorten <=-6pt}]
            \node(0) {$\bullet$};
            \node(1) [right=of 0] {$\bullet$};
            \node(2) [below=of 0, red] {$\bullet$};
            \node(3) [below=of 1, red] {$\bullet$};
            
            \node[left=1pt of 0] {$0$};
            \node[right=1pt of 1] {$1$};
            \node[left=1pt of 2, red] {$2$};   
            \node[right=1pt of 3, red] {$3$};   
            
            \draw[->] (0) to (1);
            \draw[->] (1) to (3);
            \draw[->] (0) to (2);
            \draw[->, red] (2) to (3);

        \end{tikzpicture}
        \label{C22-23}
        \caption{The inclusion of the edge $2-3$ (in red) into the commuting square $X = C_{2,2}$ is a cofibration.}
    \end{figure}

\end{example}

\begin{remark}\label{rmk:cofib-both-conditions}
One might ask whether a simpler cofibration category for path homology can be obtained using only one of the two conditions of \cref{cofib-def} to define cofibrations. In fact, both conditions are necessary.  

To see this, consider the inclusion of the edge $0-1$ or the edge $2-3$ into the $(3,1)$-cycle $C_{3,1}$. The edge $0-1$ (\cref{C31-01}) admits a projecting decomposition, as we saw in \cref{ex:proj-decomp}, but also admits edges out of itself. Meanwhile, the edge $2-3$ (\cref{C31-23}) does not admit edges out of itself, but also does not admit a projecting decomposition. These examples would be cofibrations if we omit either of the conditions in the definition. 

In each case, consider the pushout of the inclusion of the edge in question along the homology isomorphism $I_1 \to I_0$ (see \cref{rmk:tree_homology}).
The pushouts are both isomorphic in $\DiGraph$ to $C_{2,1}$ of \cref{def:Cmn}.
The map from $C_{3,1}$ to the pushout cannot be a homology isomorphism (\cref{ex:commuting_triangle_homology,ex:3_1_square}).
Thus the proposed cofibration category structure does not exist, as it fails left properness (\cref{left-properness}).
\end{remark}

The remainder of this section will be concerned with proving certain useful properties of cofibrations.
In particular, many of these results will be used in \cref{sec:main-thm} to establish a cofibration category structure on $\DiGraph$, with cofibrations as defined above and path homology isomorphisms as the weak equivalences.

\begin{proposition}\label{cof-wide-subcat}
The class of cofibrations contains all identities and is closed under composition.
\end{proposition}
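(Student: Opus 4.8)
The plan is to verify the two defining conditions of \cref{cofib-def} directly in each case. For an identity $\id[X] \colon X \to X$: the graph $X$ is trivially an induced subgraph of itself, there are no edges from $X_V$ into the empty set $X_V \setminus X_V$, and the identity function $X_V \to X_V$ is a projecting decomposition of $X$ with respect to $X$. Indeed $X^X = X$ since every vertex $x$ admits the length-$0$ path to itself, and for any $x$ and any $a \in X_V$ admitting a path from $x$, every path from $x$ to $a$ passes through $x = \id[X_V](x)$.

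For closure under composition, suppose $A \cto X$ and $X \cto Y$ are cofibrations; I claim $A \cto Y$ is one. That $A$ is an induced subgraph of $Y$ is immediate from transitivity of induced-subgraph inclusions. For the no-outgoing-edges condition: given an edge $a \to v$ of $Y$ with $a \in A_V$, since there are no edges out of $X$ we must have $v \in X_V$; since $X$ is induced in $Y$, this is then an edge of $X$; and since there are no edges out of $A$ in $X$, we conclude $v \in A_V$.

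The substance is constructing a projecting decomposition of $Y$ with respect to $A$ out of the given $\pi_X \colon X^A_V \to A_V$ and $\pi_Y \colon Y^X_V \to X_V$. The key observation is a ``paths starting in $X$ stay in $X$'' principle: since there are no edges out of $X$, any path in $Y$ whose initial vertex lies in $X_V$ is in fact a path in $X$. Consequently, distances in $X$ and in $Y$ agree between vertices of $X$, and $Y^A_V \subseteq Y^X_V$, so $\pi_Y$ is defined on all of $Y^A_V$. Moreover, applying the projecting-decomposition property of $X \cto Y$ with target a vertex $a \in A_V \subseteq X_V$ shows that any minimal-length $Y$-path from $v \in Y^A_V$ to $a$ passes through $\pi_Y v$, so $\pi_Y v$ itself admits a path to $A$, i.e.\ $\pi_Y v \in X^A_V$. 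Hence the composite $\pi := \pi_X \circ \pi_Y \colon Y^A_V \to A_V$ is well-defined; by \cref{pi-closest} it restricts to the identity on $A_V$.

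To verify that $\pi$ is a projecting decomposition: given $v \in Y_V$ and $a \in A_V$ admitting a path from $v$, first use the projecting-decomposition property of $X \cto Y$ to obtain a minimal-length $Y$-path $v \to \cdots \to \pi_Y v \to \cdots \to a$; by the ``stays in $X$'' principle its second half is a path in $X$, and it must in fact be a minimal-length $X$-path from $\pi_Y v$ to $a$, since otherwise re-concatenating a shorter one with the first half would shorten the whole $Y$-path. Now apply the projecting-decomposition property of $A \cto X$ to replace the second half by a minimal-length $X$-path from $\pi_Y v$ to $a$ passing through $\pi_X(\pi_Y v) = \pi v$; re-concatenating with the first half yields a minimal-length $Y$-path from $v$ to $a$ through $\pi v$, as required. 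The main obstacle is the bookkeeping around these path decompositions, specifically invoking the ``no edges out of $X$'' hypothesis at the right moments to ensure distances computed in $X$ and in $Y$ coincide on the relevant vertices.
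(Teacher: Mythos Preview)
Your proof is correct and follows essentially the same approach as the paper: the composite projecting decomposition $\pi = \pi_X \circ \pi_Y$ (the paper writes $\pi_A \circ \pi_X$ with different naming conventions), verified by splitting a minimal-length path at the intermediate projection and replacing the terminal segment. Your version is slightly more careful in explicitly invoking the ``no edges out of $X$'' hypothesis to justify that the terminal segment lies in $X$ and that $\pi_Y v \in X^A_V$, points the paper leaves implicit.
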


\begin{proof}
To see that all identities are cofibrations, we note that in the case $A = X$, it is trivially true that there are no edges out of $A$ and a projecting decomposition given by the identity on $A_V$.

Now let $A \cto X$ and $X \cto Y$ be a composable pair of cofibrations; we must show that the composite inclusion $A \cto Y$ is a cofibration. 

First, let $y$ denote a vertex of $Y \setminus A$ and let $a$ denote a vertex of $A$; we will show that there is no edge from $a$ to $y$. If $y \in X_V$ then this follows from the fact that $A \cto X$ is a cofibration, since there are no edges out of $A$ in $X$
; similarly, if $y \in Y_V\setminus X_V$ then this follows from the fact that $X \cto Y$ is a cofibration.

Now we construct a projecting decomposition of $Y$ with respect to $A$. Note that we already have projecting decompositions of $Y$ with respect to $X$ and of $X$ with respect to $A$; denote these by $\pi_X$ and $\pi_A$, respectively.

For a vertex $y \in Y_V^A$, let $a$ denote a vertex of $A$ admitting a path from $y$.
Then since $a \in X_V$, there is a path from $y$ to $a$ of minimal length passing through $\pi_X y$.
Observe that the terminal segment of this path beginning at $\pi_X y$ defines a path in $X$ of minimal length from $\pi_X y$ to $a$; thus we may replace this segment with one of equal length passing through $\pi_A \pi_X y$.
We can therefore define $\pi y$ to be $\pi_A \pi_X y$.
Note that this definition does not depend on the choice of $a \in A_V$ admitting a path from $y$.
It follows that $\pi$ is a projecting decomposition of $Y$ with respect to $A$, as for any vertex $a$ of $A$ admitting a path from $y$, we have defined a path of minimal length from $y$ to $a$ which passes through $\pi y$.
\end{proof}

\begin{proposition}\label{empty-cof}
For every directed graph $X$, the unique map $\varnothing \cto X$ is a cofibration. 
\end{proposition}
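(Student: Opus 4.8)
The plan is to verify directly that the inclusion $\varnothing \into X$ satisfies both defining conditions of \cref{cofib-def}, each of which will hold vacuously because $\varnothing$ has no vertices. First, $\varnothing$ is trivially an induced subgraph of $X$: it has no vertices, hence no edges, and the condition defining ``induced subgraph'' is vacuous. Second, the ``no edges out of $A$'' condition asks that there be no edge from a vertex of $A = \varnothing$ to a vertex of $X \setminus \varnothing$; since $\varnothing$ has no vertices, there are no such edges.

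For the third condition, I would observe that $X^{\varnothing}$ is the induced subgraph on vertices of $X$ admitting a path to some vertex of $\varnothing$; as $\varnothing$ has no vertices, no vertex of $X$ has such a path, so $X^{\varnothing}_V = \varnothing$. Thus the unique (empty) function $\pi \from X^{\varnothing}_V \to \varnothing_V$ is the only candidate for a projecting decomposition, and the defining property in \cref{proj-decomp-def} — a condition quantified over $a \in \varnothing_V$ — is vacuously satisfied. Hence $\pi$ is a projecting decomposition of $X$ with respect to $\varnothing$, and $\varnothing \cto X$ is a cofibration. I do not anticipate any obstacle: every clause is satisfied vacuously, so the proof is a one-line verification with no computation required.
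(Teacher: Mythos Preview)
Your proof is correct and matches the paper's own argument: both verify the two cofibration conditions vacuously, noting that there are no edges out of $\varnothing$ and that $X^{\varnothing}_V = \varnothing$, so the empty function serves as the projecting decomposition. You are slightly more explicit in checking that $\varnothing$ is an induced subgraph, but the substance is identical.
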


\begin{proof}
It is trivially true that there are no edges out of the empty subgraph of $X$. To obtain a projecting decomposition, we note that both $\varnothing_V$ and $A_V^\varnothing$ are empty, so a suitable function $\pi$ is given by the identity on the empty set.
\end{proof}

\subsection*{Closure properties of cofibrations}

In this subsection, we prove that cofibrations are closed under several natural operations: box products, pushouts, and retracts.
Several negative results are provided as well, including the failure of monoidality and non-existence of a small generating set of cofibrations.

\begin{proposition}\label{box-prod-cofibs}
A box product of cofibrations is a cofibration.
\end{proposition}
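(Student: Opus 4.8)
The plan is to verify the two defining conditions of \cref{cofib-def} for the induced subgraph inclusion $A \gtimes B \cto X \gtimes Y$ associated to cofibrations $A \cto X$ and $B \cto Y$: that no edges leave $A \gtimes B$, and that $X \gtimes Y$ admits a projecting decomposition with respect to $A \gtimes B$. When $A$ or $B$ is empty the box product is empty and the claim reduces to \cref{empty-cof}, so I would assume both are nonempty. One first notes that $A \gtimes B$ is genuinely an induced subgraph of $X \gtimes Y$: an edge of $X \gtimes Y$ between two vertices of $A_V \times B_V$ is, by definition of $\gtimes$, an edge of $X$ in the first coordinate (with the second fixed) or an edge of $Y$ in the second (with the first fixed), and since $A$ and $B$ are induced in $X$ and $Y$ it already lies in $A \gtimes B$.

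The ``no edges out'' condition is the same short case analysis: an edge of $X \gtimes Y$ out of a vertex $(a,b) \in A_V \times B_V$ is either an edge $a \to u$ of $X$ with second coordinate unchanged, or an edge $b \to v$ of $Y$ with first coordinate unchanged; since $A \cto X$ and $B \cto Y$ are cofibrations, in the first case $u \in A_V$ and in the second $v \in B_V$, so in either case the edge lands in $A \gtimes B$.

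The real content is the projecting decomposition. Writing $\pi_A$ and $\pi_B$ for the given projecting decompositions, I would first show $(X \gtimes Y)^{A \gtimes B}_V = X^A_V \times Y^B_V$: projecting a path in $X \gtimes Y$ onto either coordinate yields a walk, hence a path, in that factor; and conversely, given $x \in X^A_V$ and $y \in Y^B_V$, one runs a path $x \to \pi_A x$ in the first coordinate and then a path $y \to \pi_B y$ in the second to reach $(\pi_A x, \pi_B y) \in A \gtimes B$. I then set $\pi(x,y) := (\pi_A x, \pi_B y)$. Given $(x,y)$ and a vertex $(a,b) \in A_V \times B_V$ admitting a path from $(x,y)$, the coordinate projections produce paths $x \to a$ and $y \to b$, so by hypothesis there are minimal-length paths $\gamma_X \colon x \to a$ through $\pi_A x$ and $\gamma_Y \colon y \to b$ through $\pi_B y$; by \cref{pi-closest}, the initial segment of $\gamma_X$ up to $\pi_A x$ is a geodesic of length $h(x)$, and likewise for $\gamma_Y$. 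Splitting $\gamma_X = \sigma_X \cdot \tau_X$ and $\gamma_Y = \sigma_Y \cdot \tau_Y$ at the $\pi$-vertices and concatenating $\sigma_X$, then $\sigma_Y$, then $\tau_X$, then $\tau_Y$ --- each performed in the appropriate coordinate --- yields a walk from $(x,y)$ to $(a,b)$ through $\pi(x,y)$ whose length is the length of $\gamma_X$ plus the length of $\gamma_Y$. Since in any path of $X \gtimes Y$ from $(x,y)$ to $(a,b)$ the first-coordinate moves and the second-coordinate moves form walks in $X$ and $Y$, of lengths at least those of $\gamma_X$ and $\gamma_Y$ respectively, this walk has minimal possible length; hence it is a geodesic through $\pi(x,y)$, which is what the projecting-decomposition condition requires.

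The only real obstacle is bookkeeping: assembling the two one-dimensional geodesics into a box-product geodesic of \emph{exactly} the right length. The key inputs that make the lengths add up cleanly are \cref{pi-closest}, which locates $\pi_A x$ at distance exactly $h(x)$ along any geodesic through it (and similarly for $\pi_B y$), together with the elementary observation that the minimal length of a path in $X \gtimes Y$ between two product vertices is the sum of the minimal lengths of paths between the corresponding vertices in the two factors.
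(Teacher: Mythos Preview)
Your proof is correct and follows essentially the same approach as the paper: both define $\pi(x,y) = (\pi_A x, \pi_B y)$, use that a path in $X \gtimes Y$ decomposes into coordinate-wise walks whose lengths sum to the total, and construct the required geodesic by running the first-coordinate geodesic to $\pi_A x$, then the second-coordinate geodesic to $\pi_B y$, then continuing in each coordinate to $(a,b)$. Your treatment is slightly more explicit about the induced-subgraph check and the identification $(X \gtimes Y)^{A \gtimes B}_V = X^A_V \times Y^B_V$, but the argument is the same.
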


\begin{proof}
Let $A \cto X$ and $B \cto Y$ be a pair of cofibrations. Consider the box product $A \gtimes B \cto X \gtimes Y$. By the definition of the box product, we may observe that the vertices of $X \gtimes Y$ consist of all pairs $(x,y)$ with $x \in X_V, y \in Y_V$, and that $A \gtimes B$ is the induced subgraph on the set of such pairs for which $x \in A_V, y \in B_V$. Given two vertices $(x,y), (x',y')$, there is an edge $(x,y) \to (x',y')$ if and only if there are edges $x \to x'$ in $H$ and $y \to y'$ in $Q$, with at least one of these edges being degenerate (\ie{} either $x = x'$ or $y = y'$). 

It follows that a path from $(x,y)$ to $(x',y')$ is equivalent to an interleaving of a path from $x$ to $x'$ in $X$ with a path from $y$ to $y'$ in $Y$. More precisely, such a path consists of a sequence of edges of $X \gtimes Y$, each necessarily of the form $(e_i,\id)$ for $e_i$ an edge of $X$ or $(\id,e'_i)$ for $e'_i$ an edge of $Y$, where the sequence of edges $(e_i)$ forms a path from $x$ to $y$ in $X$ and the sequence of edges $(e'_i)$ forms a path from $x'$ to $y'$ in $Y$. The length of such a path is the sum of the lengths of its component paths. This characterization of edges in $X \gtimes Y$ shows that there are no edges out of $A \gtimes B$.

Now we define a projecting decomposition of $X \gtimes Y$ with respect to $A \gtimes B$. For $(x,y)$ admitting a path to a vertex of $A \gtimes B$, we define $\pi(x,y) = (\pi x, \pi y)$. To see that this is a projecting decomposition, consider a path in $X \gtimes Y$ from $(x,y)$ to $(a,b)$ where $a \in A_V, b \in B_V$. The characterization of paths above shows that this path is obtained by interleaving a path from $x$ to $a$ in $X$ with a path from $y$ to $b$ in $Y$. From these we obtain paths of minimal length from $x$ to $a$ through $\pi x$, and from $y$ to $b$ through $\pi y$. Choosing a suitable interleaving of these paths, we obtain a path of minimal length from $(x,y)$ to $(a,b)$ passing through $(\pi x, \pi y)$. Concretely, we may construct the desired path as follows:
\begin{itemize}
    \item proceed from $(x,y)$ to $(\pi x, y)$, moving at each step along the chosen path from $x$ to $a$ in the first component while keeping the second component fixed;
    \item proceed from $(\pi x, y)$ to $(\pi x, \pi y)$, moving at each step along the chosen path from $y$ to $b$ in the second component while keeping the first component fixed;
    \item proceed similarly from $(\pi x, \pi y)$ to $(a, \pi y)$;
    \item proceed similarly from $(a,\pi y)$ to $(a,b)$
\end{itemize}
Thus we see that $\pi$ is indeed a projecting decomposition.
\end{proof}

\begin{proposition}\label{cof-pushout}
Cofibrations are stable under pushout.
\end{proposition}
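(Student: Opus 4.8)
The plan is to transport the two defining conditions of \cref{cofib-def} from $A \cto X$ to the pushout $B \to Y$ of $A \cto X$ along an arbitrary map $f \colon A \to B$, using the explicit descriptions of pushouts in \cref{pushout-char} and \cref{pushout-complement}.

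First I would record that $B \to Y$ is an induced subgraph inclusion: by \cref{pushout-char}, $Y_V = (X_V \setminus A_V) \sqcup B_V$, edges between two vertices of $B$ in $Y$ are exactly those of $B$, and $X \to Y$ restricts to an isomorphism $X \setminus A \cong Y \setminus B$. The ``no edges out of $B$'' condition is then immediate: an edge in $Y$ from $b \in B_V$ to a vertex $x \in Y_V \setminus B_V \cong X_V \setminus A_V$ would, by the third bullet of \cref{pushout-char}, force an edge $\overline a \to x$ in $X$ for some $\overline a \in A_V$ with $f(\overline a) = b$, contradicting that $A \cto X$ has no edges out of $A$. (Note the asymmetry in that bullet: edges $x \to b$ \emph{are} permitted in $Y$, which is why the hypothesis must be on $A \cto X$ and not merely on the pushout data.)

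The substance is building a projecting decomposition $\pi_Y$ of $Y$ with respect to $B$. By \cref{pushout-complement}, a vertex $y \in Y_V \setminus B_V$ lies in $Y^B_V$ iff the corresponding $x \in X_V \setminus A_V$ lies in $X^A_V$, so by \cref{rmk:projecting_alternative} it suffices to define $\pi_Y$ on $Y^B_V \setminus B_V$ and verify the criterion of \cref{proj-decomp-def} there; I would set $\pi_Y y = f(\pi_X x)$, where $\pi_X$ is the projecting decomposition of $X$ with respect to $A$. To check the criterion, fix $b \in B_V$ reachable from such a $y$ and a minimal-length path $P$ from $y$ to $b$ in $Y$. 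Since there are no edges out of $B$, $P$ stays in $Y \setminus B$ until a single ``jump'' edge into $B$, after which it remains in $B$; write $P = P_1 \cdot (y' \to b_0) \cdot P_2$ accordingly. By \cref{pushout-char} and \cref{pushout-complement}, $P_1$ together with the jump edge corresponds to a path in $X$ from $x$ to some $\overline a_0 \in A_V$ with $f(\overline a_0) = b_0$, of length $|P_1| + 1$. Now invoke the projecting decomposition of $X$: there is a minimal-length path from $x$ to $\overline a_0$ through $\pi_X x$. I would first observe, using \cref{pi-closest} (which puts $\pi_X x$ at distance exactly $h(x)$ from $x$) together with minimality, that its initial segment from $x$ to $\pi_X x$ has length exactly $h(x)$ and meets $A$ only at $\pi_X x$, while its terminal segment from $\pi_X x$ to $\overline a_0$ lies entirely in $A$ (no edges out of $A$). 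Translating back: the initial segment yields a path in $Y$ from $y$ to $f(\pi_X x)$ of length $h(x)$; the terminal segment maps under $f$ to a path in $B$ from $f(\pi_X x)$ to $b_0$ of length at most $(|P_1| + 1) - h(x)$; and appending $P_2$ produces a path in $Y$ from $y$ to $b$ through $f(\pi_X x)$ of total length at most $h(x) + \big((|P_1| + 1) - h(x)\big) + |P_2| = |P|$. Since $P$ was minimal, this path is minimal and passes through $\pi_Y y$, completing the verification.

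The main obstacle is the path surgery in the previous paragraph, and specifically the point that a minimal path from $y$ to $b$ in $Y$ need not come from a minimal path from $x$ to $A$ in $X$: one may profitably travel farther inside $X \setminus A$ to reach a vertex of $A$ whose $f$-image is closer to $b$ within $B$. This is exactly what forces the use of the projecting decomposition of $X$ for an arbitrary target $\overline a_0$, and what makes the auxiliary fact that the ``$x$-to-$\pi_X x$'' portion of the guaranteed minimal path always has length $h(x)$ essential; the rest is bookkeeping with the edge description of $Y$ in \cref{pushout-char}.
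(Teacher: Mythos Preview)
Your proposal is correct and follows essentially the same approach as the paper: define the pushed-out projecting decomposition by $\pi_Y = f \circ \pi_X$ on the complement, decompose a minimal path in $Y$ at its first entry into $B$, lift the prefix to $X$, invoke the projecting decomposition there to reroute through $\pi_X x$, and push the resulting path forward to $Y$. The only notable difference is bookkeeping: the paper argues that the minimal length from $x$ to the lifted vertex $\overline a_0$ in $X$ is \emph{exactly} $|P_1|+1$, whereas you only use the inequality $\leq |P_1|+1$, which is all that is needed; conversely, you are more explicit than the paper about why the $x$-to-$\pi_X x$ segment of the rerouted path has length exactly $h(x)$ and avoids $A$ before its endpoint.
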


\begin{proof}
Consider a pushout diagram of directed graphs as depicted below, with $A \cto X$ a cofibration.
\[
\begin{tikzcd}
A \arrow[r,"f|_A"] \arrow[d,tail] \pushout & A' \arrow[d,hook] \\
X \arrow[r,"f"] & X' \\
\end{tikzcd}
\]
Applying \cref{pushout-char,pushout-complement}, we see that there are no edges out of $A'$ in $X'$ (as there are no edges out of $A$ in $X$), and that the complement of $A'$ in $X'$ is isomorphic to the complement of $A$ in $X$.

We now define a projecting decomposition $\pi'$ of $X'$ with respect to $A'$. Note that, by \cref{pushout-complement}, a vertex of $X' \setminus A'$ admits a path to $A'$ if and only if the corresponding vertex of $X \setminus A$ admits a path to $A$. Thus we define $\pi'$ as follows: for $a \in A'_V$ we set $\pi' a = a$, while for $x \in (X')^{A'}_V \setminus A'_V$ we set $\pi' x = f(\pi x)$.

It remains to show that $\pi'$ is in fact a projecting decomposition. Take $x \in (X')_V^{A'}\setminus A'_V$ and 
let $p$ be a path in $X'$ from $x$ to $a \in A'$. Without loss of generality assume that $p$ is of minimal length among such paths. Since there are no edges out of $A'$, the path $p$ may be depicted as in the following diagram, where $y_i \in X'_V \setminus A'_V$ for all $i$ and $b_j \in A'_V$ for all $j$:

\begin{figure}[H]
    \centering
    \begin{tikzpicture}[node distance=20pt]
        \node(0) {$\bullet$};
        \node(1) [right=of 0] {$\bullet$};
        \node(2) [right=of 1] {$\dots$};
        \node(3) [right=of 2] {$\bullet$};
        \node(4) [right=of 3] {$\bullet$};
        \node(5) [right=of 4] {$\dots$};
        \node(6) [right=of 5] {$\bullet$};
        \node(7) [right=of 6] {$\bullet$};

        \node[above=1pt of 0] {$x$};
        \node[above=1pt of 1] {$y_1$};
        \node[above=1pt of 3] {$y_m$};
        \node[above=1pt of 4] {$b_1$};
        \node[above=1pt of 6] {$b_n$};
        \node[above=1pt of 7] {$a$};

        \draw[->] (0) to (1);
        \draw[->] (1) to (2);
        \draw[->] (2) to (3);
        \draw[->] (3) to (4);
        \draw[->] (4) to (5);
        \draw[->] (5) to (6);
        \draw[->] (6) to (7);
    \end{tikzpicture}
\end{figure}

By \cref{pushout-char}, the existence of the edge $y_m \to b_1$ implies the existence of an edge $y_m \to \overline{b}_1$ in $X$, with $\overline{b}_1 \in A_V$, such that $f(\overline{b}_1) = b_1$. By the fact that $\pi$ is a projecting decomposition, it follows that there is a path of minimal length from $x$ to $\overline{b}_1$ in $X$ which passes through $\pi x$. In fact, the assumption that $p$ is of minimal length, together with \cref{pushout-char}, implies that the minimal length of a path from $x$ to $\overline{b}_1$ in $X$ is precisely $m+1$, as any shorter path in $X$ would induce a shorter path in $X'$. Thus we may write this path from $x$ to $\overline{b}_1$ as:

\begin{figure}[H]
    \centering
    \begin{tikzpicture}[node distance=20pt]
        \node(0) {$\bullet$};
        \node(1) [right=of 0] {$\bullet$};
        \node(2) [right=of 1] {$\dots$};
        \node(3) [right=of 2] {$\bullet$};
        \node(4) [right=of 3] {$\bullet$};
        \node(5) [right=of 4] {$\bullet$};
        \node(6) [right=of 5] {$\dots$};
        \node(7) [right=of 6] {$\bullet$};
        \node(8) [right=of 7] {$\bullet$};

        \node[above=1pt of 0] {$x$};
        \node[above=1pt of 1] {$z_1$};
        \node[above=1pt of 3] {$z_k$};
        \node[above=1pt of 4] {$\pi x$};
        \node[above=1pt of 5] {$c_1$};
        \node[above=1pt of 7] {$c_l$};
        \node[above=1pt of 8] {$\overline{b}_1$};        
        
        \draw[->] (0) to (1);
        \draw[->] (1) to (2);
        \draw[->] (2) to (3);
        \draw[->] (3) to (4);
        \draw[->] (4) to (5);
        \draw[->] (5) to (6);
        \draw[->] (6) to (7);
        \draw[->] (7) to (8);        
    \end{tikzpicture}
\end{figure}

where $z_i \in X_V \setminus A_V$ for all $i$, $c_j \in A_V$ for all $j$, and $k+l+1 = m$. Thus we obtain the following path in $X'$:

\begin{figure}[H]
    \centering
    \begin{tikzpicture}[node distance=20pt]
        \node(0) {$\bullet$};
        \node(1) [right=of 0] {$\bullet$};
        \node(2) [right=of 1] {$\dots$};
        \node(3) [right=of 2] {$\bullet$};
        \node(4) [right=of 3] {$\bullet$};
        \node(5) [right=of 4] {$\bullet$};
        \node(6) [right=of 5] {$\dots$};
        \node(7) [right=of 6] {$\bullet$};
        \node(8) [right=of 7] {$\bullet$};
        \node(9) [right=of 8] {$\dots$};
        \node(A) [right=of 9] {$\bullet$};
        \node(B) [right=of A] {$\bullet$};
        
        \node[above=1pt of 0] {$x$};
        \node[above=1pt of 1] {$z_1$};
        \node[above=1pt of 3] {$z_k$};
        \node[above=1pt of 4] {$f(\pi x)$};
        \node[above=1pt of 5] {$f(c_1)$};
        \node[above=1pt of 7] {$f(c_l)$};
        \node[above=1pt of 8] {$\phantom{b_1=}f(\overline{b}_1) = b_1$};
        \node[above=1pt of A] {$b_n$};        
        \node[above=1pt of B] {$a$};        
        
        \draw[->] (0) to (1);
        \draw[->] (1) to (2);
        \draw[->] (2) to (3);
        \draw[->] (3) to (4);
        \draw[->] (4) to (5);
        \draw[->] (5) to (6);
        \draw[->] (6) to (7);
        \draw[->] (7) to (8);        
        \draw[->] (8) to (9);        
        \draw[->] (9) to (A);        
        \draw[->] (A) to (B);        
    \end{tikzpicture}
\end{figure}

The length of this path is at most $k + l + n + 2 = m + n + 1$, the length of the original path $p$. As $p$ was assumed to be of minimal length, we have thus constructed a path of minimal length from $x$ to $a$ passing through $\pi' x = f(\pi x)$.
\end{proof}

As an aside, we mention that our cofibrations are not stable under the operation of taking \emph{pushout-(box) products}, meaning that the cofibration category structure of \cref{cofib-cat} is not monoidal with respect to the box product.
(Since this is only a comment, we do not recall the definition of a monoidal cofibration category in full detail.)
This sets our cofibration category of directed graphs apart from many familiar cofibration category structures, e.g., the Serre cofibration category structure on topological spaces of \cref{ex:top-cofib-cat}, which is monoidal with respect to the cartesian product.

\begin{proposition}
Let $A \cto X$ and $B \cto Y$ be cofibrations such that $X \setminus A$ contains a vertex admitting a path to $A$, and likewise $Y \setminus B$ contains a vertex admitting a path to $B$.
Then the pushout box product $X \gtimes B \cup_{A \gtimes B} A \gtimes Y \cto X \gtimes Y$ is not a cofibration.
\end{proposition}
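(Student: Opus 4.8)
The plan is to establish the negative statement by showing that $X \gtimes Y$ admits no projecting decomposition with respect to the pushout box product $P := X \gtimes B \cup_{A \gtimes B} A \gtimes Y$; since a cofibration must admit a projecting decomposition (\cref{cofib-def}), this already forces $P \cto X \gtimes Y$ not to be a cofibration, and the no-outgoing-edges condition plays no role. By \cref{pi-closest}, a projecting decomposition would make the closest vertex of $P$ to any given vertex of $X \gtimes Y$ \emph{unique}, so it suffices to exhibit a single vertex of $X \gtimes Y$ possessing two distinct closest vertices in $P$. The only mildly technical ingredients are the identification of $P$ with a concrete induced subgraph of $X \gtimes Y$ (via \cref{pushout-char}) and the length bookkeeping for paths in a box product (extracted from the proof of \cref{box-prod-cofibs}); neither is a genuine obstacle.

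I would begin by fixing notation. Since $A \hookrightarrow X$ and $B \hookrightarrow Y$ are induced subgraph inclusions, so are $A \gtimes B \hookrightarrow X \gtimes B$ and $A \gtimes B \hookrightarrow A \gtimes Y$, and a routine application of \cref{pushout-char} identifies $P$ with the induced subgraph of $X \gtimes Y$ on the vertex set $V_P = (A_V \times Y_V) \cup (X_V \times B_V)$; its complement is $(X_V \setminus A_V) \times (Y_V \setminus B_V)$, which is nonempty by hypothesis. I would also use the following description of paths in a box product: a path in $X \gtimes Y$ from $(x,y)$ to $(x',y')$ is an interleaving of a path from $x$ to $x'$ in $X$ with a path from $y$ to $y'$ in $Y$, and its length is the sum of the lengths of those two component paths.

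Next I would construct the test vertex. Choose $x_0 \in X_V \setminus A_V$ admitting a path to $A$, with height $h(x_0) = p \geq 1$ and closest vertex $a_0 \in A_V$; symmetrically choose $y_0 \in Y_V \setminus B_V$ with $h(y_0) = q \geq 1$ and closest vertex $b_0 \in B_V$. By the symmetry of the two factors we may assume $p \leq q$. Fix a minimal path from $y_0$ to $b_0$; every intermediate vertex of this path lies outside $B$, since an intermediate vertex in $B$ would yield a path from $y_0$ to $B$ of length less than $q$. Let $y_1$ be the vertex of this path at distance $q - p$ from $y_0$; then $y_1 \notin B_V$ (because $q - p \leq q - 1$) and $h(y_1) = p$ (the tail of the path gives a path of length $p$ from $y_1$ to $B$, and a shorter one would contradict minimality of $q$). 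Put $v := (x_0, y_1)$, which lies in the complement of $P$.

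Finally I would verify that $v$ has two distinct closest vertices in $P$ and conclude. Moving along the minimal path from $x_0$ to $a_0$ in the first coordinate, keeping $y_1$ fixed, gives a path of length $p$ from $v$ to $(a_0, y_1) \in A_V \times Y_V \subseteq V_P$; moving along the minimal path from $y_1$ to $b_0$ in the second coordinate gives a path of length $p$ from $v$ to $(x_0, b_0) \in X_V \times B_V \subseteq V_P$. These two target vertices are distinct, since $a_0 \in A_V$ while $x_0 \notin A_V$. Conversely, by the interleaving description any path from $v$ to a vertex $(x',y') \in V_P$ decomposes into component paths of lengths $\ell_1$ and $\ell_2$ with total length $\ell_1 + \ell_2$; if $x' \in A_V$ then $\ell_1 \geq h(x_0) = p$, while if $y' \in B_V$ then $\ell_2 \geq h(y_1) = p$, so in either case that path has length at least $p$. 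Thus $p$ is exactly the minimal length of a path from $v$ into $P$, and it is realized by the two distinct vertices $(a_0, y_1)$ and $(x_0, b_0)$. By \cref{pi-closest}, $X \gtimes Y$ admits no projecting decomposition with respect to $P$, and therefore $P \cto X \gtimes Y$ is not a cofibration.
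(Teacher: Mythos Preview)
Your proof is correct and uses the same core idea as the paper: exhibit a vertex of $X \gtimes Y$ with two distinct closest vertices in $P$ and invoke \cref{pi-closest}. The paper's execution is shorter, observing that a vertex outside $A$ with a path to $A$ yields one with an \emph{edge} to $A$ (the penultimate vertex of such a path), so one may take height-$1$ vertices $x$ and $y$ in each factor directly and note that $(x,y)$ has edges to the two distinct vertices $(a,y),(x,b)\in P$; your height-equalization step producing $y_1$ is unnecessary.
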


\begin{proof}
By assumption, we have a vertex $x \in X_V$, not contained in $A$, admitting an edge to a vertex $a \in A_V$, and likewise we have a vertex $y \in Y_V$, not contained in $B$, admitting an edge to a vertex $b \in B_V$. The vertex $(x,y)$ of $X \gtimes Y$ is not contained in $X \gtimes B \cup_{A \gtimes B} A \gtimes Y$. However, this vertex admits edges to the two distinct vertices $(a,y), (x,b)$ of $X \gtimes B \cup_{A \gtimes B} A \gtimes Y$. By \cref{pi-closest}, it follows that $A \gtimes Y$ does not admit a projecting decomposition with respect to $X \gtimes B \cup_{A \gtimes B} A \gtimes Y$.
\end{proof}

We next prove that our cofibrations, like the cofibrations of a model category, are closed under retracts and transfinite composition.

\begin{proposition}\label{retract-closure}
Let $A \cto X$ be a cofibration and let $B \to Y$
be a retract of $A \cto X$, as depicted below:
\[
\begin{tikzcd}
B \arrow[d] \arrow[r,"i",above] & A \arrow[d] \arrow[r,"f",above] & B \arrow[d] \\
Y \arrow[r,"j",above] & X \arrow[r,"g",above] & Y \\
\end{tikzcd}
\]
where $fi = \id[B]$ and $gj = \id[Y]$. Then $B \to Y$ is a cofibration.
\end{proposition}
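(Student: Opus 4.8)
The plan is to verify, in turn, the three requirements of \cref{cofib-def}: that $B \to Y$ is an induced subgraph inclusion, that there are no edges out of $B$ in $Y$, and that $Y$ admits a projecting decomposition with respect to $B$. Write $m \colon A \cto X$ and $n \colon B \to Y$ for the two vertical maps of the retract square, and $i,f,j,g$ for the horizontals, so that $fi = \id[B]$, $gj = \id[Y]$, $jn = mi$, and $gm = nf$; in particular $i$ and $j$ are split monomorphisms and hence injective on vertices. For the first requirement, $n$ is injective on vertices because $jn = mi$ with $m$ and $i$ injective, which makes $n$ a monomorphism of digraphs; and if $n(b_1) \to n(b_2)$ is an edge of $Y$ with $b_1 \neq b_2$, then applying $j$ gives a (non-degenerate, since $j$ is injective) edge $m(i(b_1)) \to m(i(b_2))$ of $X$, whence an edge $i(b_1) \to i(b_2)$ of $A$ because $m$ is an induced subgraph inclusion, whence, applying $f$, the desired edge $b_1 \to b_2$ of $B$.

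For the second requirement, suppose there were an edge $n(b) \to y$ of $Y$ with $b \in B_V$ and $y \notin n(B_V)$. Applying the injective map $j$ yields an edge $m(i(b)) \to j(y)$ of $X$ whose source lies in $A$; since $A \cto X$ has no edges out of $A$, it follows that $j(y)$ lies in $A$, say $j(y) = m(a')$. Applying $g$ and using $gj = \id[Y]$ and $gm = nf$ then gives $y = g(j(y)) = g(m(a')) = n(f(a')) \in n(B_V)$, a contradiction.

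For the third requirement, observe that $j$ carries a path from $y$ to $B$ in $Y$ to a path from $j(y)$ to a vertex of $A$ in $X$ (using $jn = mi$), so $j(y)$ lies in the domain $X_V^A$ of the projecting decomposition $\pi_X$ of $X$ with respect to $A$; define $\pi_Y y := f(\pi_X(j(y))) \in B_V$. This restricts to the identity on $B_V$ by \cref{pi-closest}, so $\pi_Y$ is defined on all of $Y^B_V$ (cf.~also \cref{rmk:projecting_alternative}). Given $b \in B_V$ with a path from $y$ to $b$ in $Y$, one first checks that the minimal length of such a path equals the minimal length of a path in $X$ from $j(y)$ to $m(i(b))$: the map $j$ sends a $Y$-path from $y$ to $b$ to an $X$-path of length no greater, and $g$ sends an $X$-path from $j(y)$ to $m(i(b))$ back to a $Y$-path from $y$ to $b$ of length no greater (using $gm = nf$ and $fi = \id[B]$), so the two minimal lengths coincide. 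Choosing, by the projecting-decomposition property of $\pi_X$, a minimal-length path in $X$ from $j(y)$ to $m(i(b))$ through $m(\pi_X(j(y)))$ and applying $g$ produces a path in $Y$ from $y$ to $b$ passing through $g(m(\pi_X(j(y)))) = n(\pi_Y y)$ whose length is at most, and hence by the previous sentence exactly, the minimal one. Thus $\pi_Y$ is a projecting decomposition, and $B \to Y$ is a cofibration.

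I expect the only genuinely delicate point to be the path-length bookkeeping in the last paragraph: one must use that $j$ is a split monomorphism with retraction $g$ to rule out either map strictly shortening a minimal path between the relevant vertices. The remaining steps are straightforward diagram chases through the retract square.
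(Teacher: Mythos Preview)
Your proof is correct and follows essentially the same route as the paper's: verify the induced-subgraph, no-edges-out, and projecting-decomposition conditions by chasing the retract square, and produce a minimal path through the proposed $\pi_Y y$ by pushing a minimal $X$-path back along $g$. The only cosmetic difference is that the paper identifies $Y$ with its image $j(Y)\subseteq X$ and takes $\pi_Y = \pi_X|_Y$, then separately checks $f(\pi_X y)=\pi_X y$ to see that $\pi_X y$ already lies in $B$; your formulation $\pi_Y := f\circ\pi_X\circ j$ builds this in and avoids that extra step, at the cost of carrying the maps $i,j,f,g,m,n$ explicitly throughout.
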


\begin{proof}
The fact that $A \cto X$ is a cofibration implies that all maps in the left-hand square are inclusions on vertices, so we will consider vertices in $B$ as vertices in $A,Y,$ or $X$ without relabeling. 

We first show that $B \to Y$ is an induced subgraph inclusion. Consider vertices $b, b' \in B_V$ such that there is an edge $b \to b'$ in $Y$. Then there is an edge $b \to b'$ in $X$, hence also in $A$ as $A$ is an induced subgraph of $X$. Taking the image of this edge under the retraction $f \colon A \to B$ gives an edge $b \to b'$ in $B$.

A similar argument shows that there are no edges out of $B$ in $Y$. Explicitly, consider a pair of vertices $b \in B_V, y \in Y_V$ with an edge $b \to y$. Then $b \in A$, implying that $y \in A$ as well since there are no edges out of $A$ in $X$. It follows that $g(y) = y$ is contained in $B$, as $f \colon A \to B$ is the restriction of $g$ to $A$. 

Finally, we will show that $Y$ admits a projecting decomposition with respect to $B$. Let $\pi$ denote the projecting decomposition of $X$ with respect to $A$; we will show that $\pi |_{Y_V^B}$ is a projecting decomposition of $Y$ with respect to $B$.

Let $y$ be a vertex of $Y$ admitting a path to some vertex $b$ of $B$. We first note that by \cref{pi-closest}, $\pi y$ is the unique vertex of $A$ admitting a path from $y$ of length $h(y)$ in $X$. The image of this path under $g$ defines a path from $y$ to $f \pi y$ in $Y \subseteq X$ of length less than or equal to $h(y)$; by the minimality of $h(y)$ and uniqueness of $\pi y$, it follows that $f \pi y = \pi y$. This in turn implies that $\pi y \in B$.

Now observe that by the definition of a projecting decomposition, there is a path in $X$ from $y$ to $b$, of minimal length among such paths, which passes through $\pi y$. Denote the length of this path by $n$; then the image of this path under $g$ defines a path from $y$ to $b$ in $Y$, passing through $f \pi y$, of length less than or equal to $n$ (hence equal to $n$ by minimality). Moreover, as $n$ is the minimum length of a path in $X$ from $y$ to $b$, it is likewise the minimum length of such paths in $Y \subseteq X$. Thus $\pi |_{Y}$ is indeed a projecting decomposition of $Y$ with respect to $B$.
\end{proof}

\begin{proposition}\label{closed-under-transf-comp}
The class of cofibrations is closed under transfinite composition.
\end{proposition}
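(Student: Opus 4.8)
The plan is to argue by transfinite induction. Write $X_0 \to X_1 \to \cdots \to X_\alpha \to \cdots$ ($\alpha \le \lambda$) for the $\lambda$-sequence at hand, so that each $X_\alpha \cto X_{\alpha+1}$ is a cofibration, $X_\beta = \operatorname{colim}_{\gamma<\beta} X_\gamma$ for every limit ordinal $\beta \le \lambda$, and the transfinite composite in question is $X_0 \to X_\lambda$. I will show by induction on $\alpha$ that $X_0 \cto X_\alpha$ is a cofibration for all $\alpha \le \lambda$. The base case is the identity on $X_0$, and the successor case $\alpha = \gamma + 1$ follows from \cref{cof-wide-subcat}, since $X_0 \cto X_\alpha$ is then the composite of $X_0 \cto X_\gamma$ (a cofibration by the inductive hypothesis) with $X_\gamma \cto X_{\gamma+1}$. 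So the real content is the limit case: assuming $\alpha$ is a limit ordinal and $X_0 \cto X_\gamma$ is a cofibration for every $\gamma < \alpha$, show that $X_0 \cto X_\alpha$ is a cofibration.

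I would begin by recording the form of $X_\alpha$. Since colimits in $\DiGraph$ are computed on underlying multigraphs and the colimit of a chain of monomorphisms of sets is a union, we have $(X_\alpha)_V = \bigcup_{\gamma < \alpha}(X_\gamma)_V$ and $(X_\alpha)_E = \bigcup_{\gamma < \alpha}(X_\gamma)_E$, with each $X_\gamma$ an induced subgraph of $X_\alpha$ (a routine secondary induction, using that induced subgraph inclusions compose). The crucial consequence is that any path in $X_\alpha$, being finite, lies entirely in some $X_\gamma$ with $\gamma < \alpha$---here we use that $\alpha$ is a limit ordinal. The two ``easy'' conditions follow at once: an edge of $X_\alpha$ between vertices of $X_0$ already occurs at some finite stage $X_\gamma$ and hence lies in $X_0$, as $X_0 \cto X_\gamma$ is an induced subgraph inclusion; and an edge of $X_\alpha$ out of $X_0$ would, at the stage where it first occurs, contradict the no-edges-out condition for the cofibration $X_0 \cto X_\gamma$.

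The heart of the proof is the construction of a projecting decomposition $\pi$ of $X_\alpha$ with respect to $X_0$. I would define $\pi x$, for $x$ admitting a path to $X_0$ in $X_\alpha$, to be the vertex of $X_0$ closest to $x$ in $X_\alpha$; this choice is forced by \cref{pi-closest} at every finite stage. Writing $h_\gamma$ and $\pi_\gamma$ for the height function and projecting decomposition of the cofibration $X_0 \cto X_\gamma$, and $h(x)$ for the minimal length of a path from $x$ to $X_0$ in $X_\alpha$: to see that $\pi$ is well defined, suppose $a, a' \in (X_0)_V$ both lie at distance $h(x)$ from $x$ in $X_\alpha$; witnessing minimal paths lie in some $X_\gamma$, where the minimal length of a path from $x$ to $X_0$ is still $h(x)$---it cannot decrease, since $X_\gamma$ is a subgraph of $X_\alpha$, and the paths show it is not larger---so $a$ and $a'$ are both at height $h_\gamma(x) = h(x)$ from $x$ in $X_\gamma$, forcing $a = \pi_\gamma x = a'$ by \cref{pi-closest}. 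The same computation shows $\pi x = \pi_\gamma x$ whenever $X_\gamma$ contains a minimal path from $x$ to $X_0$. Finally, to verify the projecting-decomposition property for a pair $(x, a)$ with $a \in (X_0)_V$ reachable from $x$, I would pick $\gamma < \alpha$ large enough that $X_\gamma$ contains a minimal path from $x$ to $X_0$ and a minimal path from $x$ to $a$; then $\pi_\gamma x = \pi x$, the minimal lengths of a path from $x$ to $a$ in $X_\gamma$ and in $X_\alpha$ agree, and applying the projecting decomposition of the cofibration $X_0 \cto X_\gamma$ produces a path from $x$ to $a$ in $X_\gamma$, of that minimal length, passing through $\pi x$---which remains a minimal path in $X_\alpha$. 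Together with the previous observations this shows $X_0 \cto X_\alpha$ is a cofibration, completing the induction.

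The step I expect to be the main obstacle is exactly this coherence across the filtration: a priori, enlarging $X_\gamma$ could shorten the distance from a vertex $x$ to $X_0$ and shift its nearest vertex of $X_0$, which would make the stagewise projecting decompositions $\pi_\gamma$ incompatible and obstruct gluing them into a single $\pi$. The resolution is that the colimit-level minimal length is already realized by a path at some finite stage, so from that stage onward the height of $x$ and its nearest vertex of $X_0$ have stabilized; monotonicity of distances along the inclusions together with \cref{pi-closest} then force everything to agree. The remaining ingredients---the colimit computation, the no-edges-out and induced-subgraph conditions, and assembling the transfinite induction via \cref{cof-wide-subcat}---are routine.
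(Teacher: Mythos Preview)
Your proof is correct, and the overall architecture (transfinite induction, successor step via \cref{cof-wide-subcat}, limit step by assembling stagewise data) matches the paper. The limit case, however, is handled differently.

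The paper carries a stronger inductive hypothesis: for every $\gamma < \beta$ the map $X_\gamma \cto X_\beta$ is a cofibration, not just $X_0 \cto X_\beta$. With this in hand, for a limit ordinal $\beta$ and a vertex $x \notin X_0$ admitting a path to $X_0$, the paper takes the \emph{minimal} $\beta'$ with $x \in X_{\beta'}$ and sets $\pi x = \pi_{\beta'} x$, where $\pi_{\beta'}$ is the projecting decomposition of $X_0 \cto X_{\beta'}$. The point is that there are no edges out of $X_{\beta'}$ in $X_\beta$ (part of the stronger hypothesis), so every path from $x$ to $X_0$ in $X_\beta$ is already a path in $X_{\beta'}$; no ``large enough stage'' argument is needed, since all path lengths are computed at the single fixed stage $\beta'$.

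Your approach instead keeps the weaker hypothesis and exploits finiteness of paths: a minimal path from $x$ to $X_0$ (resp.\ to a given $a$) in $X_\alpha$ lives in some $X_\gamma$, and monotonicity of distances along the inclusions forces $h_\gamma(x) = h(x)$, whence \cref{pi-closest} gives coherence of the $\pi_\gamma$. This is a perfectly good alternative; it trades the stronger inductive statement for a stabilization argument. The paper's route is a bit cleaner once the stronger hypothesis is set up (one fixed stage per vertex, no need to enlarge $\gamma$ for each pair $(x,a)$), while yours has the virtue of proving exactly what is asked and making the role of uniqueness in \cref{pi-closest} very explicit.
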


\begin{proof}
Let $\alpha$ be an arbitrary limit ordinal. Consider a diagram $X \colon \alpha \to \DiGraph$ as depicted below:
\[
X_0 \to X_1 \to \ldots
\]
in which each map $X_\beta \cto X_{\beta+1}$
is a cofibration. For every limit ordinal $\beta < \alpha$, let $X_\beta$ denote the union of all $X_\gamma$ for $\gamma < \beta$, i.e.~ the colimit of the restricted diagram $X|_{\beta}$. Let $X_\alpha$ denote the colimit of this diagram, so that the map $X_0 \to X_\alpha$
is the transfinite composite of the maps $X_{\beta} \cto X_{\beta + 1}$. We will show, by induction on $\beta \leq \alpha$, that for each $\gamma < \beta$ the map $X_\gamma  \cto X_\beta$ is a cofibration; in particular, this implies that $X_0 \cto X_\alpha$ is a cofibration.

In the base case $\beta = 0$, there is no $\gamma < \beta$, so the statement is vacuously true.

Let $\beta > 0$ and suppose the statement holds for all $\beta' < \beta$. In the case of a successor ordinal $\beta = \beta' + 1$, the transfinite composite factors as $X_\gamma \to X_{\beta'} \cto X_{\beta' + 1}$.
If $\gamma = \beta'$ then the map $X_\gamma \cto X_{\beta'}$ is the identity, while if $\gamma < \beta'$ then $X_\gamma \cto X_{\beta'}$ is a cofibration by the induction hypothesis. The map $X_{\beta'} \cto X_{\beta' + 1}$ is a cofibration by assumption and so the composite is a cofibration by \cref{cof-wide-subcat}.

Now we consider the case of a limit ordinal $\beta$, so that $X_\beta$ is the union of all the directed graphs $X_{\beta'}$ for $\beta' < \beta$. We first show that $X_\gamma \to X_\beta$ 
is an induced subgraph inclusion. Let $x, x'$ be a pair of vertices of $X_\gamma$ such that $X_\beta$ contains an edge $x \to x'$. Then there is an edge $x \to x'$ in $X_{\beta'}$ for some $\gamma \leq \beta' < \beta$. Since $X_\gamma \cto X_{\beta'}$ is a cofibration by the induction hypothesis, and hence an induced subgraph inclusion, there is an edge $x \to x'$ in $X_\gamma$ as well.

Next let $x$ be a vertex of $X_\gamma$ and $x'$ a vertex of $X_\beta$ not contained in $X_\gamma$; we will show there is no edge $x' \to x$ in $X_\beta$. Note that $x'$ is a vertex of $X_{\beta'}$ for some $\beta' < \beta$, and $X_\gamma \cto X_{\beta'}$ is a cofibration by the induction hypothesis. Thus there is no edge $x' \to x$ in $X_{\beta'}$; since we have just proven that $X_{\beta'} \to X_\beta$ 
is an induced subgraph inclusion, the same is true in $X_\beta$.

Finally, we show that $X_\beta$ admits a projecting decomposition $\pi$ with respect to $X_\gamma$. By \cref{rmk:projecting_alternative}, it suffices to define $\pi$ satisfying satisfying the criteria of \cref{proj-decomp-def} on $x \in (X_\beta)^{X_{\gamma}}_V \setminus (X_{\gamma})_V$. Let $\beta'$ be minimal such that $x\in X_{\beta'}$. We must have that $\gamma < \beta' < \beta$. By the induction hypothesis, the inclusion $X_\gamma \cto X_{\beta'}$ is a cofibration and thus admits a projecting decomposition $\pi_{\beta'}$. We define $\pi x = \pi_{\beta'} x$.

Note that any path in $X_\beta$ from $x \in (X_{\beta'})_V$ to $y\in (X_\gamma)_V$ must be contained in $X_{\beta'}$, since there are no edges from $X_{\beta'}$ to $ X_{\beta}$. Therefore a minimal length path from $x$ to $y$ in $X_{\beta'}$ which passes through $\pi_{\beta'}x$ is also of minimal length in $X_\beta$ and passes through $\pi x=\pi_{\beta'}x$.
\end{proof}

In view of the results above, given a set of cofibrations, we may consider the class of cofibrations which it generates under pushout, retract, and transfinite composition. 
We might naturally hope to find some set of cofibrations which generates the entire class of cofibrations in $\DiGraph$ via the procedure outlined above; however, \cref{no-gen-cof-set} below shows that this is not possible.
Since we are not working with a model category structure, we must first define our concept of generation under these operations precisely.

\begin{definition}
Let $C$ denote a set of cofibrations in $\DiGraph$. For each ordinal $\alpha$, we define a class of cofibrations $\Gen_\alpha(C)$ by transfinite induction on $\alpha$ as follows.
\begin{itemize}
    \item $\Gen_0(C) = C$;
    \item for a successor ordinal $\alpha = \beta + 1$, $\Gen_\alpha(C)$ is the class of all retracts, pushouts, and transfinite composites of maps in $\Gen_\beta(C)$. In particular, this implies that $\Gen_\beta(C)$ is contained in $\Gen_\alpha(C)$;
    \item for a limit ordinal $\alpha$, $\Gen_\alpha(C)$ is the class of all maps contained in some $\Gen_\beta(C)$ for $\beta < \alpha$.
\end{itemize}
Note that in general, $\Gen_\alpha(C)$ will be a proper class for $\alpha > 0$.

We then define $\Gen(C)$, the \emph{class of cofibrations generated by $C$}, to be the union of all the classes $\Gen_{\alpha}(C)$, \ie{} the class of all maps contained in $\Gen_\alpha(C)$ for some $\alpha$.
\end{definition}

\begin{proposition}\label{no-gen-cof-set}
There is no set of cofibrations $C$ such that $\Gen(C)$ is the class of all cofibrations.
\end{proposition}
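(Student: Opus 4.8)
The plan is to introduce a cardinal-valued invariant $\nu$ of cofibrations which takes arbitrarily large values, is not increased by retracts, pushouts, or transfinite composites, and whose supremum over any \emph{set} of cofibrations is therefore a bound that is never exceeded inside the class that set generates. For a cofibration $A \cto X$ set
\[
  \nu(A \cto X) := \sup\{\, |S| \ : \ S \subseteq X_V \setminus A_V \text{ and every two vertices of } S \text{ are joined by a directed path in } X \,\}.
\]
Because \cref{cofib-def} forbids edges out of $A$, any such path stays within $X \setminus A$, so $\nu$ depends only on the isomorphism type of the complement $X \setminus A$, and it is a genuine cardinal since $|S| \le |X_V \setminus A_V|$. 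Writing $K_\lambda$ for the digraph on a $\lambda$-element vertex set with an edge between every ordered pair of distinct vertices, \cref{empty-cof} makes $\varnothing \cto K_\lambda$ a cofibration, and taking $S = (K_\lambda)_V$ gives $\nu(\varnothing \cto K_\lambda) = \lambda$; thus $\nu$ is unbounded on the class of all cofibrations. For a set $C$ of cofibrations, $N := \sup_{c \in C} \nu(c)$ is a cardinal.

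Next I would record three stability properties. \textbf{Pushouts:} if $A \cto X$ is the pushout of $A' \cto X'$ along a map $A' \to A$, then \cref{pushout-char} and \cref{pushout-complement} identify $X \setminus A$ with $X' \setminus A'$, so $\nu(A \cto X) = \nu(A' \cto X')$. \textbf{Retracts:} if $B \cto Y$ is a retract of $A \cto X$, with section $j \colon Y \to X$ and retraction $g \colon X \to Y$ as in the proof of \cref{retract-closure}, then $g(A_V) \subseteq B_V$ (immediate from the right-hand square of the retract diagram), whence $j(Y_V \setminus B_V) \subseteq X_V \setminus A_V$; since $j$ is injective on vertices and sends directed paths to directed paths, any witnessing set $S \subseteq Y_V \setminus B_V$ maps to a witnessing set of equal cardinality in $X_V \setminus A_V$, so $\nu(B \cto Y) \le \nu(A \cto X)$. \textbf{Transfinite composites:} if $X_0 \cto X_\alpha$ is the composite of a tower $(X_\beta \cto X_{\beta+1})_{\beta < \alpha}$, partition $(X_\alpha)_V \setminus (X_0)_V$ into the strata $W_\delta := (X_{\delta+1})_V \setminus (X_\delta)_V$ (legitimate, since every vertex outside $X_0$ first appears at a unique successor stage). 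By the proof of \cref{closed-under-transf-comp} each $X_\gamma \cto X_\alpha$ is a cofibration, so no edge of $X_\alpha$ runs from $(X_\gamma)_V$ into $(X_\alpha)_V \setminus (X_\gamma)_V$; hence a directed path joining two vertices of $(X_\alpha)_V \setminus (X_0)_V$ never meets $X_0$, and along it the stratum index is non-increasing. Therefore a set $S$ witnessing $\nu(X_0 \cto X_\alpha) \ge |S|$, whose vertices are pairwise joined by directed paths in both directions, lies inside a single stratum $W_\delta$, so $|S| \le \nu(X_\delta \cto X_{\delta+1})$ and thus $\nu(X_0 \cto X_\alpha) \le \sup_\beta \nu(X_\beta \cto X_{\beta+1})$.

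From here a transfinite induction on $\alpha$ shows that every cofibration in $\Gen_\alpha(C)$ has $\nu$-value at most $N$: the case $\alpha = 0$ is the definition of $N$; a successor step $\alpha = \beta + 1$ is handled by the three properties above, together with the inclusion $\Gen_\beta(C) \subseteq \Gen_{\beta+1}(C)$; and a limit step is immediate from $\Gen_\alpha(C) = \bigcup_{\beta < \alpha} \Gen_\beta(C)$. Consequently every cofibration in $\Gen(C)$ has $\nu \le N$, whereas $\varnothing \cto K_{N^+}$ --- with $N^+$ the cardinal successor of $N$ --- is a cofibration with $\nu(\varnothing \cto K_{N^+}) = N^+ > N$. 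Hence $\varnothing \cto K_{N^+} \notin \Gen(C)$, and since $C$ was an arbitrary set, no set of cofibrations generates the whole class.

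The step I expect to be the main obstacle is the transfinite-composite case. The obvious candidates for $\nu$ --- the size of $X_V \setminus A_V$, the length of the longest directed cycle in the complement, and the like --- either blow up under transfinite composition or are already unbounded on a suitable \emph{set} of cofibrations, so the crux is to find a quantity that is simultaneously cardinal-valued (to beat arbitrary sets) and ``localizes'' to a single stage of any tower. The ``no edges out of $A$'' half of \cref{cofib-def} is exactly what provides the localization, by confining each strongly connected portion of a complement to one stratum; making this stratification argument rigorous --- in particular, that a directed path never ascends to a higher stratum and never passes through $X_0$ --- is the technical heart of the proof.
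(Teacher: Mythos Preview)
Your proof is correct and follows essentially the same line as the paper's. The paper fixes a cardinal larger than every codomain in $C$, considers the corresponding complete digraph $K_S$, and shows by transfinite induction that no cofibration whose complement contains $K_S$ as a subgraph can lie in $\Gen_\alpha(C)$; the three cases (retract, pushout, transfinite composite) are handled by exactly the observations you record. Your version packages the same content as a cardinal-valued invariant $\nu$ that does not increase under the three operations, which is a slightly cleaner abstraction: you track ``size of the largest strongly connected subset of the complement'' rather than ``contains a fixed complete digraph in the complement,'' but the combinatorial crux --- that the no-edges-out condition forces any such subset to localize to a single stratum of a tower --- is identical. One small point worth making precise in your write-up: your definition of $\nu$ should state unambiguously that every \emph{ordered} pair in $S$ is joined by a directed path (strong connectivity), since that is what you use in the transfinite case.
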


\begin{proof}
Let $C$ denote an arbitrary set of cofibrations. Let  $S$ denote a set whose cardinality is greater than that of the vertex set of the codomain of any map in $C$ and let $K_S$ denote the complete directed graph having $S$ as its set of vertices. Given $A \cto X$, let $X - A$ denote the induced subgraph on $X_V\setminus A_V.$

By transfinite induction on ordinals $\alpha$, we will show that $\Gen_\alpha(C)$ does not contain any cofibration $A \cto X$ such that $X - A$ contains $K_S$ as a subgraph. (For instance, we may consider $A = \varnothing, X = K_S$.) 

In the base case $\alpha = 0$, this is immediate from the definition of $S$.

Now suppose that the statement holds for all $\beta < \alpha$. If $\alpha$ is a limit ordinal, then it is immediate from the induction hypothesis that the statement holds for $\alpha$ as well. Now consider the case of a successor ordinal $\alpha = \beta + 1$. Given a cofibration $A \cto X$ with an induced subgraph inclusion $K_S \subseteq X-A$, to show that $A \cto X$ is not contained in $\Gen_\alpha(C)$, we must show that it is not a retract, pushout, or transfinite composite of maps in $\Gen_\beta(C)$.

For the case of retracts, suppose that $A \cto X$ is a retract of some cofibration $B \cto Y$, as depicted below.
\[
\begin{tikzcd}
A \arrow[d] \arrow[r,"i",above] & B \arrow[d] \arrow[r,"f",above] & A \arrow[d] \\
X \arrow[r,"j",above] & Y \arrow[r,"g",above] & X \\
\end{tikzcd}
\]
Then we have a subgraph inclusion $K_S \subseteq X \subseteq Y$. If some vertex $s$ of $K_S$ is contained in $B$, then $f(s) = s$ is contained in $A$. This contradicts our assumption that $K_S$ is contained entirely in $X \setminus A$. Thus we see that $K_S$ is contained in the complement $Y \setminus B$. By the induction hypothesis, it follows that $B \cto Y$ is not contained in $\Gen_\beta(C)$.

Next suppose that $A \cto X$ is a pushout of some cofibration $B \cto Y$, as depicted below.
\[
\begin{tikzcd}
B \arrow[r] \arrow[d,tail] \pushout & A \arrow[d,tail] \\
Y \arrow[r] & X \\
\end{tikzcd}
\]
By assumption, no vertex of $K_S$ is contained in $A$, so $K_S$ is contained entirely in the image of $Y \to X$. Thus, by \cref{pushout-char}, $Y$ contains $K_S$ as a subgraph. Moreover, no vertex of $K_S \subseteq Y$ can be contained in $B$, as this would imply that the corresponding vertex of $X$ was contained in $A$ by the commutativity of the diagram. It follows that $K_S$ is a subgraph of $Y \setminus B$, implying that $B \cto Y$ is not contained in $\Gen_\beta(C)$ by the induction hypothesis.

Now suppose that $A \cto X$ is a transfinite composite of some family of cofibrations $X_\gamma \to X_{\gamma + 1}$, indexed by $\gamma < \delta$ for some ordinal $\delta$, with $X_0 = A$. Then for each such $\gamma$ we have $X_\gamma \subseteq X$, and $X$ is the union of all the $X_\gamma$; in particular, each vertex of $X$ is contained in some subgraph $X_\gamma$. Moreover, for $\rho < \gamma$ there are no edges from the vertices of $X_\gamma \setminus X_\rho$ to those of $X_\rho$, as the map $X_\rho \to X_\gamma$ is a cofibration by \cref{closed-under-transf-comp}. 

Let $s$ denote an arbitrary vertex of $K_S \subseteq X$, and let $\gamma$ be minimal such that $s$ is a vertex of $X_\gamma$. If $\gamma$ were a limit ordinal, then $X_\gamma$ would be the union of all $X_\rho$ for $\rho < \gamma$, contradicting minimality. Thus $\gamma = \gamma' + 1$ for some $\gamma'$ such that $s$ is not contained in $X_{\gamma'}$. As every other vertex of $K_S$ admits edges to and from $s$, all vertices of $K_S$ must also appear in $X_\gamma$, and not in $X_{\gamma'}$. As $X_{\gamma} \cto X$ is an induced subgraph inclusion, it follows that $K_S$ is a subgraph of $X_\gamma \setminus X_{\gamma'}$. By the induction hypothesis, the cofibration $X_{\gamma'} \cto X_\gamma$ is not contained in $\Gen_{\beta}(C)$.

Thus we see that $A \cto X$ is not a retract, pushout or transfinite composite of any map in $\Gen_{\beta}(C)$, so it is not contained in $\Gen_{\alpha}(C)$.

By induction, no cofibration $A \cto X$ such that $K_S$ is a subgraph of $X \setminus A$ is contained in $\Gen(C)$. Thus $\Gen(C)$ is not the class of all cofibrations.
\end{proof}


\section{Excision} \label{sec:excision}

In the present section, we formulate and prove the excision axiom for path homology in terms of the cofibrations of \cref{cofib-def}.
This plays a crucial role in \cref{sec:main-thm} when we establish a cofibration category structure on $\DiGraph$ (\cref{cofib-cat}).

We now turn our attention to the \emph{relative homology modules} $H_n(X,A)$ associated to a cofibration $A \cto X$.
Throughout this section, we fix a commutative coefficient ring $R$.

As our analysis of these groups will involve several constructions which are functorial with respect to commuting squares of cofibrations, we define $\Cof$ to be the category with cofibrations in $\DiGraph$ as its objects and commuting squares as its morphisms. Let $\CofPO$ denote the subcategory of $\Cof$ which contains all objects, but with only pushout squares as morphisms. Throughout this section, a diagram of the form
\[
\begin{tikzcd}
A \arrow[r,"f|_A"] \arrow[d,tail] & B \arrow[d,tail] \\
X \arrow[r,"f"] & Y \\
\end{tikzcd}
\]
\ie{} a morphism in $\Cof$ or $\CofPO$, will be denoted by $f \colon (X,A) \to (Y,B)$.

\subsection*{Statement of excision axiom}

The goal of this short subsection is to give a statement of the `excision axiom' (\cref{pushout-rel-hom}), namely that relative homology takes homotopy pushouts to isomorphisms.

\begin{definition}
Given a subgraph inclusion $A \subseteq X$, the $n^{\mathrm{th}}$ \emph{relative homology module} $H_{n}(X,A)$ is the $n^{\mathrm{th}}$ homology module of the factor complex $\Omega(X) / \Omega(A)$.
\end{definition}

Given a cofibration $A \cto X$, we let $X - A$ denote the complement of $A$ in $X$, \ie{} the induced subgraph on the vertices of $X$ not contained in $A$. We define $(X - A)^A$ to be the induced subgraph of $X - A$ on the vertices which admit paths to $A$. We let $\W$ denote the induced subgraph of $X$ on the vertices of height 1, \ie{} those vertices not in $A$ which admit edges into $A$. 


\begin{definition}
Let $Q \colon \Cof \to \Ch_R$ denote the functor which sends a cofibration $A \cto X$ to the factor complex $\Omega(X)/\Omega(A)$, and a morphism of cofibrations to the induced map between their factor complexes. 
\end{definition}

We observe that the composite of $Q$ with the homology functor $H_* \colon \Ch_R \to \ModRN$ sends each cofibration to its family of relative homology modules, and each commuting diagram to its family of induced maps on relative homology modules.

\begin{proposition}[{\cite[Thm.~3.11]{grigor'yan-jimenez-muranov-yau}}]\label{rhom-les}
For any subgraph inclusion $A \hookrightarrow X$, there is a relative homology long exact sequence:
\[
\cdots \to H_n(X) \to H_{n}(X,A) \to H_{n-1}(A) \to \cdots \to H_0(A) \to H_0(X) \to H_0(X,A) \to 0\text{.}
\]
\end{proposition}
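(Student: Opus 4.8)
The plan is to realize $H_\bullet(X,A)$ as the homology of an honest quotient complex and then feed the resulting short exact sequence of chain complexes into the standard long exact homology sequence. The key intermediate claim is that, for any subgraph inclusion $A \hookrightarrow X$, the chain map $\Omega_\bullet(A) \to \Omega_\bullet(X)$ induced by functoriality of $\Omega$ is a monomorphism in each degree. Granting this, $\Omega_\bullet(A)$ is a subcomplex of $\Omega_\bullet(X)$, the factor complex $\Omega_\bullet(X)/\Omega_\bullet(A)$ is defined and coincides with the complex $Q(A \hookrightarrow X)$ computing relative homology, and we obtain a short exact sequence of chain complexes
\[
0 \longrightarrow \Omega_\bullet(A) \longrightarrow \Omega_\bullet(X) \longrightarrow \Omega_\bullet(X)/\Omega_\bullet(A) \longrightarrow 0 .
\]

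To establish the degreewise injectivity — the one step that is not purely formal — I would work with the pullback description of $\Omega_n$ as the module of pairs $(a,b) \in A_n(X;R) \times A_{n-1}(X;R)$ with $(\bd_n \iota_n)(a) = \iota_{n-1}(b)$. Since $A \hookrightarrow X$ is injective on vertices, the map $C_n(A;R) \to C_n(X;R)$ is injective, these modules depending only on the vertex set. For the allowed regular paths, post-composition with $A \hookrightarrow X$ gives an injection $\DiGraph(I_n, A) \hookrightarrow \DiGraph(I_n, X)$, and one checks that a digraph map $I_n \to A$ factors through one of the surjections $I_n \to I_{n-1}$ precisely when its composite $I_n \to X$ does — both happen if and only if two consecutive vertices of the path coincide — so the map $A_n(A;R) \to A_n(X;R)$ on regular allowed paths is injective as well. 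The induced map $\Omega_n(A) \to \Omega_n(X)$ is the restriction of the product map $A_n(A;R) \times A_{n-1}(A;R) \to A_n(X;R) \times A_{n-1}(X;R)$; the latter is injective, being a product of injections, and therefore so is the former (equivalently, $\Omega_n(A) \to \Omega_n(X)$ is the map on pullbacks induced by a componentwise monomorphism of cospans, hence a monomorphism).

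With the short exact sequence in hand, the long exact sequence in homology associated to a short exact sequence of chain complexes yields
\[
\cdots \to H_n(A) \to H_n(X) \to H_n(X,A) \xrightarrow{\ \bd\ } H_{n-1}(A) \to H_{n-1}(X) \to \cdots ,
\]
with $\bd$ the connecting homomorphism (induced by the differential $(a,b) \mapsto (b,0)$ of $\Omega_\bullet$). Since $\Omega_\bullet(A)$, $\Omega_\bullet(X)$, and the factor complex are all concentrated in nonnegative degrees, $H_{-1}(\Omega_\bullet(A)) = 0$, which truncates the sequence on the right at $\cdots \to H_0(A) \to H_0(X) \to H_0(X,A) \to 0$. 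This is exactly the asserted sequence, recovering \cite[Thm.~3.11]{grigor'yan-jimenez-muranov-yau}. I expect the only genuinely content-bearing point to be the injectivity claim of the second paragraph, and within it the observation that whether a path $I_n \to A$ becomes degenerate in $X$ is detected on its underlying sequence of vertices; the rest is formal homological algebra.
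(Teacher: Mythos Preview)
Your argument is correct, and in fact the paper does not supply its own proof of this proposition: it is stated with a citation to \cite[Thm.~3.11]{grigor'yan-jimenez-muranov-yau} and no proof environment follows. Your approach---verifying that $\Omega_\bullet(A) \to \Omega_\bullet(X)$ is a degreewise monomorphism via the pullback description of $\Omega_n$ and then invoking the long exact sequence of a short exact sequence of complexes---is the standard one, and the injectivity check you give (reducing to injectivity of $A_n(A;R) \to A_n(X;R)$ and $C_n(A;R) \to C_n(X;R)$, both of which follow from injectivity on vertices) is exactly right.
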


\begin{corollary}\label{tcof-rel-hom}
A subgraph inclusion $A \hookrightarrow X$ is a homology isomorphism if and only if all relative homology modules $H_n(X,A)$ are zero. \qed
\end{corollary}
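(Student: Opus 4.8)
The plan is to deduce the statement formally from the relative homology long exact sequence of \cref{rhom-les}. Recall that, written out in full, this sequence takes the shape
\[
\cdots \to H_n(A) \to H_n(X) \to H_n(X,A) \to H_{n-1}(A) \to H_{n-1}(X) \to \cdots \to H_0(A) \to H_0(X) \to H_0(X,A) \to 0,
\]
in which the maps $H_n(A) \to H_n(X)$ are those induced by the inclusion $A \hookrightarrow X$.

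For the ``only if'' direction, suppose $A \hookrightarrow X$ is a homology isomorphism, so that each $H_n(A) \to H_n(X)$ is an isomorphism, and argue degreewise. For $n \geq 1$, consider the five-term segment
\[
H_n(A) \to H_n(X) \to H_n(X,A) \to H_{n-1}(A) \to H_{n-1}(X).
\]
Surjectivity of $H_n(A) \to H_n(X)$ and exactness at $H_n(X)$ make $H_n(X) \to H_n(X,A)$ the zero map, so exactness at $H_n(X,A)$ makes $H_n(X,A) \to H_{n-1}(A)$ injective; but injectivity of $H_{n-1}(A) \to H_{n-1}(X)$ and exactness at $H_{n-1}(A)$ make $H_n(X,A) \to H_{n-1}(A)$ the zero map, whence $H_n(X,A) = 0$. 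For $n = 0$, the sequence terminates as $H_0(A) \to H_0(X) \to H_0(X,A) \to 0$, so surjectivity of $H_0(A) \to H_0(X)$ together with exactness at $H_0(X)$ and at $H_0(X,A)$ again forces $H_0(X,A) = 0$.

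For the converse, suppose all the modules $H_n(X,A)$ vanish. Then for each $n \geq 0$ the relevant portion of the sequence reads $H_{n+1}(X,A) \to H_n(A) \to H_n(X) \to H_n(X,A)$ with both outer terms zero, so exactness at $H_n(A)$ and at $H_n(X)$ make $H_n(A) \to H_n(X)$ injective and surjective respectively; hence it is an isomorphism and $A \hookrightarrow X$ is a homology isomorphism.

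I do not expect any genuine obstacle here: the corollary is a formal consequence of \cref{rhom-les}, and the only point meriting a moment's care is the behaviour at the degree-$0$ end of the sequence, where the argument is the same diagram chase with the terminal $\to 0$ playing the role of the missing $H_{-1}$ term.
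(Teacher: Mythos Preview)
Your argument is correct and is exactly the standard diagram chase from the long exact sequence that the paper has in mind; indeed, the paper marks this corollary with a \qed{} immediately after stating it, treating it as an immediate consequence of \cref{rhom-les}. There is nothing to add.
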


Our main goal for this section is to prove the following.

\begin{theorem}\label{pushout-rel-hom}
Given a pushout diagram $f \colon (X,A) \to (X',A')$ in $\CofPO$,
the induced map of relative homology modules $H_n(X,A) \to H_n(X',A')$ is an isomorphism for all $n \geq 0$.
\end{theorem}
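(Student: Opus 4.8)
The plan is to establish the stronger statement that the chain map $Q(f)\colon \Omega(X)/\Omega(A) \to \Omega(X')/\Omega(A')$ is an \emph{isomorphism of chain complexes}; the asserted isomorphism on homology is then immediate. The first reduction is to identify the relative complex with a subcomplex of $\Omega(X)$. Because there are no edges out of $A$, every regular allowed path of $X$ that uses a vertex of $X - A$ consists of a regular allowed path in $X - A$, followed by a single edge into $A$ --- which by \cref{pi-closest} necessarily lands at the $\pi$-image of its source --- followed by a regular allowed path in $A$. Hence $A_n(X) = A_n(A) \oplus P_n(X)$, where $P_n(X)$ is spanned by such ``complement-touching'' paths. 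I would then check that this splitting descends to $\Omega$: for $\omega = \omega_A + \omega_P \in \Omega_n(X)$, the only terms of $\bd\omega_P$ supported on sequences lying entirely in $A_V$ come from deleting the (unique, initial) complement vertex of a complement-touching path with just one complement vertex, and such a deletion yields a path that is actually \emph{allowed} in $A$; projecting $\bd\omega$ onto the summand $C_{n-1}(A)$ then forces $\bd\omega_A \in A_{n-1}(A)$, so $\omega_A \in \Omega_n(A)$ and $\omega_P \in \Omega_n(X)$. Therefore $\Omega_n(X) = \Omega_n(A) \oplus \Omega^{\mathrm{rel}}_n(X,A)$ with $\Omega^{\mathrm{rel}}_n(X,A) := \Omega_n(X) \cap P_n(X)$, and the quotient $\Omega_n(X)/\Omega_n(A)$ is carried isomorphically onto $\Omega^{\mathrm{rel}}_n(X,A)$; the same holds verbatim for $(X',A')$, and under these identifications $Q(f)$ becomes the restriction of $f_*$.

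Next I would read off what $f_*$ does on the complement-touching subcomplex. By \cref{pushout-char} and \cref{pushout-complement}, $f$ restricts to a height-preserving isomorphism $X - A \xrightarrow{\sim} X' - A'$, and it carries the unique edge $w \to \pi w$ out of a height-$1$ vertex $w$ to the unique edge $w \to f(\pi w)$ out of $w$ in $X'$, so $\pi'w = f(\pi w)$. Consequently, on the spanning paths, $f_*$ acts as the identity on the ``complement prefix'' and applies $f|_A$ to the ``$A$-suffix'', the image being $0$ precisely when $f|_A$ renders that suffix degenerate. In particular $P_\bullet(X)$ and $P_\bullet(X')$ split as modules into blocks indexed by the regular allowed paths in $X - A = X' - A'$ ending at a height-$1$ vertex, and $f_*$ is compatible with this block decomposition.

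The technical heart of the section --- and the step I expect to be the main obstacle --- is then to prove that $f_*$ restricts to an isomorphism $\Omega^{\mathrm{rel}}_n(X,A) \xrightarrow{\sim} \Omega^{\mathrm{rel}}_n(X',A')$ for every $n$. For surjectivity one must, given an element of $\Omega^{\mathrm{rel}}_n(X',A')$, lift the $A'$-suffixes of its constituent paths back to $A$; the point is that the defining condition of $\Omega^{\mathrm{rel}}$ --- that the non-allowed part of the boundary vanishes --- forces the element to be supported only on $A'$-suffixes that admit such lifts, and forces enough coherence between blocks that a single $\Omega^{\mathrm{rel}}$-element of $X$ maps onto it. For injectivity, an element of $\Omega^{\mathrm{rel}}_n(X,A)$ in the kernel of $f_*$ is killed blockwise, and one checks that a nonzero combination of paths sharing a complement prefix and collapsed to $0$ by $f|_A$ cannot satisfy the $\Omega^{\mathrm{rel}}$ boundary condition. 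I would run both arguments by induction on the height of the initial vertex (equivalently on the length of the complement prefix), peeling off one leading edge at a time so that at each stage the problem reduces to a comparison of regular allowed paths in $A$ versus $A'$ issuing from $\pi w$ and its image $f(\pi w)$; here \cref{pi-edges}, which constrains how heights and $\pi$-values change along an edge, is what makes the induction go through. The subtlety to keep in mind is exactly that the $\Omega^{\mathrm{rel}}$-condition is phrased through $\bd$, which mixes the different blocks, whereas $f_*$ operates blockwise --- so the induction has to carry the boundary bookkeeping along carefully. Once the isomorphism on each $\Omega^{\mathrm{rel}}_n$ is in hand, compatibility with $\bd$ is automatic (since $Q$ lands in $\Ch_R$), and \cref{pushout-rel-hom} follows.
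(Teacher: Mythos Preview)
Your first reduction is correct and matches the paper exactly: the splitting $\Omega_n(X)=\Omega_n(A)\oplus\tOmega_n(X,A)$ (your $\Omega^{\mathrm{rel}}$) is the content of \cref{Omega-U-sum,Omega-U-iso}, and your argument for it is essentially theirs.

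The gap is in the second half. Your direct attack on the isomorphism $f_*\colon\tOmega_n(X,A)\to\tOmega_n(X',A')$ rests on two moves that do not work as stated. For injectivity you want to argue blockwise, but the $\Omega$-condition is global: the $(j{+}1)$-face of a term with complement prefix of length $j{+}1$ lands in the block with prefix length $j$, so boundary terms cancel \emph{across} blocks and individual blocks of $\omega$ need not themselves lie in $\tOmega$. For surjectivity you propose to ``lift the $A'$-suffixes back to $A$'' and to reduce by induction to ``a comparison of regular allowed paths in $A$ versus $A'$''; but $f|_A\colon A\to A'$ is an arbitrary digraph map, so generic allowed paths in $A'$ have no reason to lift, and there is no useful comparison to reduce to. (Incidentally, ``height of the initial vertex'' and ``length of the complement prefix'' are not equivalent: one only has $h(x_0)\le j+1$.)

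What your sketch is missing is the structural fact the paper actually proves: the $A$-part of any element of $\tOmega_n(X,A)$ is completely \emph{determined} by data living in $X-A$. Concretely, the paper shows (\cref{iso-surjective-forward,iso-surjective}) that every $\omega\in\tOmega_n(X,A)$ can be written as $q+L^0(p)$ with $q\in\Omega_n(X-A)$ and $p\in\tAOne_{n-1}(X,A)$, where $L^0$ sends a complement path $x_0\cdots x_{n-1}$ to the alternating sum of the ``grid paths'' $x_0\cdots x_i(\pi x_i)\cdots(\pi x_{n-1})$. So the only $A$-suffixes that ever occur are $\pi$-images of complement paths; since $\pi'=f\circ\pi$, both injectivity and surjectivity of $f_*$ become transparent. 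The paper packages this as a natural isomorphism between $\tOmega(X,A)$ and a mapping cone $M(X,A)$ built entirely from $X-A$ (\cref{def:M,cone-omega-iso}), which is then manifestly invariant under pushout. The hard work is establishing the $L^0$-decomposition, and the induction there runs in the \emph{opposite} direction from yours---one extends a known complement prefix by one vertex, forced by exactly the cross-block boundary cancellation you flagged as the subtlety---rather than peeling a leading edge off.
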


In \cref{sec:main-thm}, we will use this result to prove that cofibrations and homology isomorphisms form a cofibration category structure on $\DiGraph$.

\subsection*{The complement chain complex}

Note that \cref{pushout-rel-hom} is equivalent to the statement that the composite functor $H_* Q \colon \Cof \to \ModRN$ sends all pushout squares to isomorphisms of $\ModRN$. Our strategy for proving the latter is as follows: we first define the `complement chain complex' functor $\tOmega \colon \Cof \to \Ch_R$, and establish a natural isomorphism $Q \cong \tOmega$.
In the next subsection, we define another functor $M \colon \CofPO \to \Ch_R$, show that $M$ sends all morphisms of $\CofPO$ (\ie{} all pushout squares) to isomorphisms of $\Ch_R$, and establish a natural isomorphism $\tOmega|_{\CofPO} \cong M$.


We begin by defining the intermediate functor $\tOmega$.

\begin{definition}
Let $A \hookrightarrow X$ be an induced subgraph inclusion. For $n \geq 0$, the $R$-module  $\tA_n(X,A)$ is the submodule of $A_n(X)$ generated by the paths which intersect the complement $X - A$. The $R$-module $\tOmega_n(X,A)$ is defined as the intersection $\Omega_n(X) \cap \tA_n(X,A)$. The $R$-modules $\tOmega_n(X)$ assemble to a chain complex $\tOmega(X,A)$ as follows:  for $\omega \in \tOmega_n(X,A)$, the boundary $\bd \omega$ is computed by first computing the boundary of $\omega$ as an element of $\Omega_n(X)$, then setting any terms corresponding to paths not intersecting $X - A$ to $0$.
\end{definition}



\begin{lemma}\label{Omega-U-sum}
Let $A \cto X$ be a cofibration of directed graphs. Then for each $n \geq 0$, the inclusion $\Omega_n(A) \hookrightarrow \Omega_n(X)$ is isomorphic to the direct summand inclusion $\Omega_n(A) \hookrightarrow \Omega_n(A) \oplus \tOmega_n(X,A)$.
\end{lemma}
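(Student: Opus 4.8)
The plan is to show that the submodule $\Omega_n(X) \subseteq A_n(X)$ is \emph{compatible} with the decomposition of $A_n(X)$ according to whether an allowed regular path lies entirely in $A$ or meets the complement $X - A$, and that under this decomposition $\Omega_n(A)$ is exactly the first summand and $\tOmega_n(X,A)$ the second. Only the ``no edges out of $A$'' clause of \cref{cofib-def} is needed here; the existence of a projecting decomposition plays no role in this lemma. For $n=0$ everything is immediate, since $\Omega_0 = A_0$ for every digraph and $\tOmega_0(X,A) = \tA_0(X,A)$ is freely spanned by the vertices of $X - A$; so assume $n \geq 1$.

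\emph{Setup.} The free basis of $A_m(X)$ consists of the allowed regular paths $I_m \to X$, and splits into those landing in $A_V$ --- which, as $A$ is an induced subgraph, are exactly the allowed regular paths in $A$ and form a basis of $A_m(A)$ --- and those meeting $X - A$, which by definition form a basis of $\tA_m(X,A)$. Hence $A_m(X) = A_m(A) \oplus \tA_m(X,A)$. The same dichotomy applied to the basis of $C_m(X)$ (all regular paths) exhibits this as the restriction of a splitting of $C_m(X)$ into $C_m(A)$ and the span of the regular paths meeting $X - A$; in particular $A_m(A) = A_m(X) \cap C_m(A)$. Let $\pi_A \from C_m(X) \to C_m(A)$ be the resulting retraction: the identity on $C_m(A)$, annihilating every regular path meeting $X - A$. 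It carries $A_m(X)$ onto $A_m(A)$. Restricting the defining map $A_n(X) \to C_{n-1}(X)/A_{n-1}(X)$ of \cref{omega_kernel} along $A_n(A) \hookrightarrow A_n(X)$, and noting that $\bd_n\iota_n$ sends $A_n(A)$ into $C_{n-1}(A)$, identifies $\Omega_n(X) \cap A_n(A)$ with $\Omega_n(A)$. Together with $\tOmega_n(X,A) = \Omega_n(X) \cap \tA_n(X,A)$ and $A_n(A) \cap \tA_n(X,A) = 0$, this already gives $\Omega_n(A) \cap \tOmega_n(X,A) = 0$.

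\emph{Key step.} It remains to prove that $\Omega_n(A) \oplus \tOmega_n(X,A) \to \Omega_n(X)$, $(a,u) \mapsto a + u$, is surjective. Fix $\omega \in \Omega_n(X)$ and write $\omega = \omega_A + \omega_U$ with $\omega_A = \pi_A\omega \in A_n(A)$ and $\omega_U \in \tA_n(X,A)$; it suffices to show $\omega_A \in \Omega_n(A)$, for then $\omega_U = \omega - \omega_A$ lies in $\Omega_n(X) \cap \tA_n(X,A) = \tOmega_n(X,A)$. This is where the hypothesis enters: if $p = (x_0, \dots, x_n)$ is an \emph{allowed} path meeting $X - A$, then since there are no edges out of $A$, an immediate induction along $p$ forces $x_0 \notin A_V$. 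Consequently, writing $d_i p := (x_0, \dots, \widehat{x_i}, \dots, x_n)$, each face with $i \geq 1$ still contains $x_0$, so its class in $C_{n-1}(X)$ is either zero or a regular path meeting $X - A$, hence killed by $\pi_A$; while $d_0 p = (x_1, \dots, x_n)$ is an allowed regular path (a sub-path of $p$), lying in $A_{n-1}(A)$ precisely when it misses $X - A$. Writing $\omega_U$ as a combination of such paths $p$ and using linearity of $\pi_A$, we find that $\pi_A(\bd_n\iota_n\omega_U)$ is a sub-sum of $0$-th faces landing in $A$, hence lies in $A_{n-1}(A)$. Now apply $\pi_A$ to $\bd_n\iota_n\omega = \bd_n\iota_n\omega_A + \bd_n\iota_n\omega_U$: since $\bd_n\iota_n\omega_A \in C_{n-1}(A)$ is fixed by $\pi_A$, we get $\bd_n\iota_n\omega_A = \pi_A(\bd_n\iota_n\omega) - \pi_A(\bd_n\iota_n\omega_U)$, and both terms on the right lie in $A_{n-1}(A)$ --- the first because $\bd_n\iota_n\omega \in A_{n-1}(X)$ and $\pi_A$ maps $A_{n-1}(X)$ onto $A_{n-1}(A)$. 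Hence $\bd_n\iota_n\omega_A \in A_{n-1}(A)$, so $\omega_A \in \Omega_n(A)$ by \cref{omega_kernel}.

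\emph{Conclusion.} The map $(a,u) \mapsto a + u$ is injective because $A_n(A) \cap \tA_n(X,A) = 0$, surjective by the key step, and restricts to the identity on the first summand $\Omega_n(A)$; therefore it is an isomorphism $\Omega_n(A) \oplus \tOmega_n(X,A) \cong \Omega_n(X)$ intertwining the summand inclusion with the inclusion $\Omega_n(A) \hookrightarrow \Omega_n(X)$, as required. The one point demanding care --- and the conceptual crux of the lemma --- is that a submodule of a direct sum need not respect the decomposition; the content here is that $\Omega_n(X)$ does, and the reason is exactly the absence of edges out of $A$ (which pins the initial vertex of any complement-meeting allowed path outside $A$ and thereby controls which boundary faces can return into $A$). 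I expect this triangularity observation, rather than any of the bookkeeping, to be the substantive step.
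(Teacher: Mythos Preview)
Your proof is correct and follows essentially the same approach as the paper, which defers the argument to \cite[Lem.~3.10]{grigor'yan-jimenez-muranov-yau}: decompose $\omega$ along $A_n(X) = A_n(A) \oplus \tA_n(X,A)$ and use the no-edges-out condition to show the $A$-component lies in $\Omega_n(A)$. Your observation that only the no-edges-out clause is needed (not the projecting decomposition) is also noted in the paper, in the remark following \cref{Omega-U-iso}.
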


\begin{proof}
This statement is essentially a strengthening of (the dual of) \cite[Lem.~3.10]{grigor'yan-jimenez-muranov-yau}. The proof supplied in that reference shows that any linear combination $\omega \in \Omega_n(X)$ decomposes uniquely as $p(\omega) + q(\omega)$, where $p(\omega) \in \Omega_n(A)$ and $q(\omega) \in \tOmega_n(X,A)$, and the maps $p$ and $q$ thus obtained are $R$-module homomorphisms. Thus we obtain an isomorphism $\Omega_n(X) \to \Omega_n(A) \oplus \tOmega_n(X,A)$ sending each $\omega$ to $(p(\omega),q(\omega))$. The pre-image of the summand $\Omega_n(A)$ under this isomorphism is precisely $\Omega_n(A)$ viewed as a submodule of $\Omega_n(X)$; it thus follows that the inclusions $\Omega_n(A) \hookrightarrow \Omega_n(X)$ and $\Omega_n(A) \hookrightarrow \Omega_n(A) \oplus \tOmega_n(X,A)$ are isomorphic.
\end{proof}

\begin{corollary} \label{Omega-U-iso}
Given a cofibration of directed graphs $A \cto X$, for $n \geq 0$, the quotient module $\Omega_n(X) / \Omega_n(A)$ is isomorphic to $\tOmega_n(X,A)$. Moreover, these isomorphisms define an isomorphism of chain complexes $\Omega(X) / \Omega(A) \cong \tOmega(X,A)$.
\end{corollary}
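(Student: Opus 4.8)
The plan is to deduce this directly from \cref{Omega-U-sum}, handling the degreewise statement first and then the compatibility with differentials. Fix $n$. By \cref{Omega-U-sum}, the inclusion $\Omega_n(A) \hookrightarrow \Omega_n(X)$ is isomorphic to the summand inclusion $\Omega_n(A) \hookrightarrow \Omega_n(A) \oplus \tOmega_n(X,A)$, and quotienting out a direct summand returns the complementary summand; hence $\Omega_n(X)/\Omega_n(A) \cong \tOmega_n(X,A)$. Concretely, writing $p$ and $q$ for the two projections produced in the proof of \cref{Omega-U-sum}, so that every $\omega \in \Omega_n(X)$ decomposes uniquely as $\omega = p(\omega) + q(\omega)$ with $p(\omega) \in \Omega_n(A)$ and $q(\omega) \in \tOmega_n(X,A)$, the isomorphism is $[\omega] \mapsto q(\omega)$. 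Here I would record the small observation that, since $\Omega_n(A)$ is supported on paths lying entirely in $A$ while $\tOmega_n(X,A)$ is supported on paths meeting $X - A$, the map $q$ is precisely the operation ``discard the terms whose underlying path lies in $A$.''

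Next I would check that these degreewise isomorphisms form a chain map, i.e. that the square relating the quotient differential $\overline{\bd}$ on $\Omega(X)/\Omega(A)$ to the differential $\bd^{\tOmega}$ on $\tOmega(X,A)$ commutes. One way around sends $[\omega] \mapsto [\bd\omega] \mapsto q(\bd\omega)$; the other sends $[\omega] \mapsto q(\omega) \mapsto \bd^{\tOmega}(q(\omega))$, which by the definition of the differential on $\tOmega(X,A)$ equals $q(\bd\, q(\omega))$. So it suffices to see that $q(\bd\omega) = q(\bd\, q(\omega))$, and this follows since $\bd\omega = \bd\, p(\omega) + \bd\, q(\omega)$ with $\bd\, p(\omega) \in \Omega_{n-1}(A)$ (because $\Omega(A)$ is a subcomplex of $\Omega(X)$), hence $\bd\, p(\omega) \in \ker q$. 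As a byproduct this also confirms that $\tOmega(X,A)$ is genuinely a chain complex, its differential being identified with that of the quotient complex $\Omega(X)/\Omega(A)$.

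I do not expect a serious obstacle: the substantive content is already in \cref{Omega-U-sum} (a strengthening of the dual of \cite[Lem.~3.10]{grigor'yan-jimenez-muranov-yau}), and what remains is bookkeeping. The only point requiring a moment of care is matching the abstract projection $q$ from \cref{Omega-U-sum} with the explicit ``zero out the $A$-supported terms'' operation used to define $\bd^{\tOmega}$, since the chain-map verification rests on that identification. If naturality is needed for the subsequent identification $Q \cong \tOmega$ as functors on $\Cof$, it follows from the same formulas: a morphism of cofibrations respects the direct-sum decomposition compatibly with $p$ and $q$, so the induced maps on quotients correspond to the induced maps on complement complexes.
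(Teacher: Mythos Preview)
Your proposal is correct and follows essentially the same approach as the paper: the paper's proof simply cites \cref{Omega-U-sum} for the module isomorphisms and says the compatibility with differentials ``follows from a routine calculation,'' whereas you actually carry out that routine calculation via the identity $q(\bd\omega) = q(\bd\, q(\omega))$. Your closing remark on naturality anticipates the content of the subsequent \cref{Omega-U}, which the paper proves separately.
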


\begin{proof}
The stated isomorphisms of $R$-modules are immediate from \cref{Omega-U-sum}. That these isomorphisms commute with the boundary operations of $\Omega(X) / \Omega(A)$ and $\tOmega(X,A)$ follows from a routine calculation.
\end{proof}

\begin{remark}
Note that the proofs of \cref{Omega-U-sum,Omega-U-iso} do not require the condition that $A \cto X$ admits a projecting decomposition, only that there are no edges out of $A$ in $X$. Indeed, \cite[Lem.~3.10]{grigor'yan-jimenez-muranov-yau} assumes only (the dual of) the latter condition.
\end{remark}

\begin{definition}\label{tOmega-def}
We define a functor $\tOmega \colon \Cof \to \Ch_R$ as follows. Given a cofibration $A \cto X$, its image under $\tOmega$ is the chain complex $\tOmega(X,A)$. Given a commuting square $f \colon (X,A) \to (Y,B)$, each $R$-module homomorphism $\tOmega_n(f) \colon \tOmega_n(X,A) \to \tOmega_n(Y,B)$ is the composite of the restriction of $\Omega_n(f) \colon \Omega_n(X) \to \Omega_n(Y)$ with the quotient map $\Omega_n(Y) \cong \Omega_n(B) \oplus \tOmega_n(Y,B) \to \tOmega_n(Y,B)$ which sends the summand $\Omega_n(B)$ to $0$.
\end{definition}

\begin{proposition}\label{Omega-U}
The functors $Q, \tOmega \colon \Cof \to \Ch_R$ are naturally isomorphic.
\end{proposition}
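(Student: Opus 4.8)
The plan is to promote the family of objectwise chain isomorphisms already produced in \cref{Omega-U-iso} to a natural isomorphism. Recall from \cref{Omega-U-sum} that, for a cofibration $A \cto X$ and each $n \geq 0$, the decomposition $\Omega_n(X) \cong \Omega_n(A) \oplus \tOmega_n(X,A)$ is witnessed by $R$-module homomorphisms $p_{(X,A)} \colon \Omega_n(X) \to \Omega_n(A)$ and $q_{(X,A)} \colon \Omega_n(X) \to \tOmega_n(X,A)$ with $\omega = p_{(X,A)}(\omega) + q_{(X,A)}(\omega)$ for all $\omega$, and that the kernel of $q_{(X,A)}$ is precisely the submodule $\Omega_n(A) \subseteq \Omega_n(X)$. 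Hence $q_{(X,A)}$ descends to an isomorphism $\theta_{(X,A)} \colon \Omega_n(X)/\Omega_n(A) \to \tOmega_n(X,A)$, and these assemble (over all $n$) to the chain isomorphism $Q(X,A) \cong \tOmega(X,A)$ of \cref{Omega-U-iso}. So $\theta = (\theta_{(X,A)})$ is our candidate natural isomorphism $Q \cong \tOmega$, and what remains is to check, for every morphism $f \colon (X,A) \to (Y,B)$ of $\Cof$, that the square
\[
\begin{tikzcd}
Q(X,A) \arrow[r,"Q(f)"] \arrow[d,"\theta_{(X,A)}"'] & Q(Y,B) \arrow[d,"\theta_{(Y,B)}"] \\
\tOmega(X,A) \arrow[r,"\tOmega(f)"'] & \tOmega(Y,B)
\end{tikzcd}
\]
commutes; since every arrow here is a chain map, it suffices to check this in each degree $n$.

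The first step I would record is that $\Omega_n(f) \colon \Omega_n(X) \to \Omega_n(Y)$ carries the summand $\Omega_n(A)$ into $\Omega_n(B)$. This holds because $f$ is a morphism of $\Cof$: applying the functor $\Omega_n$ to the commuting square defining $f$ yields a commuting square in $\ModR$ whose vertical maps are the summand inclusions $\Omega_n(A) \hookrightarrow \Omega_n(X)$ and $\Omega_n(B) \hookrightarrow \Omega_n(Y)$ of \cref{Omega-U-sum}. Consequently $q_{(Y,B)} \circ \Omega_n(f)$ vanishes on $\Omega_n(A)$, so that for any $\omega \in \Omega_n(X)$, writing $\omega = p_{(X,A)}(\omega) + q_{(X,A)}(\omega)$ and applying $q_{(Y,B)} \circ \Omega_n(f)$ annihilates the first term, giving
\[
q_{(Y,B)}\bigl(\Omega_n(f)(\omega)\bigr) = q_{(Y,B)}\bigl(\Omega_n(f)(q_{(X,A)}(\omega))\bigr) = \tOmega_n(f)\bigl(q_{(X,A)}(\omega)\bigr),
\]
where the last equality is exactly the definition of $\tOmega_n(f)$ from \cref{tOmega-def}. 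Unwinding the identification of $\theta$ with the map induced by $q$, the left-then-bottom composite of the square sends a class $[\omega]$ to $\theta_{(Y,B)}(Q(f)[\omega]) = \theta_{(Y,B)}[\Omega_n(f)(\omega)] = q_{(Y,B)}(\Omega_n(f)(\omega))$, while the top-then-right composite sends $[\omega]$ to $\tOmega_n(f)(q_{(X,A)}(\omega))$; the displayed identity shows these coincide, establishing naturality.

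I do not expect a real obstacle here; this is a diagram chase once the notation is in place. The only places that warrant care are (i) recording that the isomorphism of \cref{Omega-U-iso} is induced by the projection $q_{(X,A)}$ (rather than some other splitting of $\Omega_n(A) \hookrightarrow \Omega_n(X)$), which is what makes it compatible with the definition of $\tOmega(f)$, and (ii) the compatibility of $\Omega_n(f)$ with the submodules $\Omega_n(A) \subseteq \Omega_n(X)$ and $\Omega_n(B) \subseteq \Omega_n(Y)$, which is guaranteed precisely because the square $f$ commutes, i.e., $f$ is a morphism of $\Cof$ and not merely a pair of digraph maps.
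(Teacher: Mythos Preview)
Your proof is correct and follows essentially the same approach as the paper: both invoke the objectwise isomorphism of \cref{Omega-U-iso} and then verify naturality by checking that either composite around the square sends $[\omega]$ to the class of $\Omega_n(f)(\omega)$ modulo $\Omega_n(B)$, with your version spelling out the diagram chase in more detail than the paper's terse one-line verification. One cosmetic slip: your labels ``left-then-bottom'' and ``top-then-right'' are swapped relative to the diagram you drew, but the computations themselves are correct and the conclusion is unaffected.
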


\begin{proof}
Given a cofibration $X \cto A$, its images under $Q$ and $\tOmega$ are isomorphic by \cref{Omega-U-iso}. To see that this isomorphism is natural, consider a morphism $f \colon (X,A) \to (Y,B)$ in $\Cof$. Given $\omega \in \tOmega_n(X,A)$, the composites $\tOmega_n(X,A) \cong \Omega_x(X) / \Omega_n(A) \xrightarrow{Q f} \Omega_n(Y) / \Omega_n(B)$ and $\tOmega_n(X,A) \xrightarrow{\tOmega(f)} \tOmega_n(Y,B) \cong \Omega_n(Y) / \Omega_n(B)$ both send $\omega$ to the equivalence class of $f (\omega)$ modulo $\Omega_n(B)$ in $\Omega_n(Y)$.
\end{proof}

We next construct the functor $M \colon \CofPO \to \Ch_R$.

\begin{definition}
Let $A \cto X$ be a cofibration. For $n \geq 0$, let $\tAOne_n(X,A)$ denote the submodule of $A_n(X - A)$ generated by the paths whose last vertices are in the subgraph $\W$. The $R$-module $\tOmegaOne_{n}(X,A)$ is defined by the following pullback diagram:
\[
\begin{tikzcd}
\tOmegaOne_n(X,A) \arrow[r] \arrow[hook,d] \pullback & \tAOne_{n-1}(X,A) \arrow[hook,d] \\
\tAOne_n(X,A) \ar[r,"\bd",above] & C_{n-1}(X - A) \\ 
\end{tikzcd}
\]
In other words, $\tOmegaOne_n(X,A)$ is the submodule of $\Omega_{n}(X)$ consisting of linear combinations of allowed paths with their last vertices in $\W$, whose boundaries again have their last vertices in $\W$. Similar to the definition of $\tOmega(X,A)$, the boundary map on $C_n(X)$ restricts to a map $\tOmegaOne_n(X,A) \to \tOmegaOne_{n-1}(X,A)$ for each $n$; we thus obtain a chain complex $\tOmegaOne(X,A) \subseteq \tOmega(X,A)$.

Now consider a morphism in $\CofPO$, i.e. a pushout diagram in which the vertical maps are cofibrations, as depicted below:
\[
\begin{tikzcd}
    A \arrow[tail,d] \arrow[r,"f|_A"] \pushout & B \arrow[tail,d]  \\
    X \arrow[r,"f"] & Y \\
\end{tikzcd}
\]
By \cref{pushout-complement}, for each $n$ the map $C_n(X) \to C_n(Y)$ restricts to an isomorphism $C_n(X - A) \cong C_n(Y - B)$, and these in turn restrict to isomorphisms $\tAOne_n(X,A) \to \tAOne(Y,B)$ which commute with boundaries. Thus we have a natural isomorphism of cospans:
\[
\begin{tikzcd}
    \tAOne_{n-1}(X,A) \arrow[d,"\cong"] \arrow[hook,r] & C_{n-1}(X-A) \arrow[d,"\cong"] & \tAOne_n(X,A) \arrow["\bd",swap,l] \arrow[d,"\cong"] \\
    \tAOne_{n-1}(Y,B) \arrow[hook,r] & C_{n-1}(Y-B) & \tAOne_n(Y,B) \arrow["\bd",swap,l]  \\
\end{tikzcd}
\]
This yields a natural isomorphism between pullbacks $\tOmegaOne_n(X,A) \cong \tOmegaOne(Y,B)$. As this map commutes with boundaries by construction, it defines a chain complex isomorphism $\tOmegaOne(X,A) \cong \tOmegaOne(Y,B)$.

We thus define a functor $\tOmegaOne \colon \CofPO \to \Ch_R^\to$ sending each object $A \cto X$ to the chain complex inclusion $\tOmegaOne(X,A) \hookrightarrow \Omega(X - A)$ and each morphism $(X,A) \to (Y,B)$ of $\CofPO$ to the commuting square below:
\[
\begin{tikzcd}
\tOmegaOne(X,A) \arrow[r,"\cong"] \arrow[d,hook] & \tOmegaOne(Y,B) \arrow[d,hook] \\
\Omega(X - A) \arrow[r,"\cong"] & \Omega(Y - B) \\
\end{tikzcd}
\]
It is straightforward to verify that this construction is functorial.
\end{definition}

\subsection*{Mapping cone complex of a cofibration}

\begin{definition}\label{def:M}
Let $M \colon \CofPO \to \Ch_R$ denote the composite of the functor $\tOmegaOne,$ defined above, with the mapping cone functor $\Ch^{\rightarrow}_R \to \Ch_R$. 
We refer to $M(X, A)$ as the \emph{mapping cone complex} of $A \cto X$.
\end{definition}

More explicitly, the functor $M$ may be described as follows:
\begin{itemize}
    \item for $n \geq 0$, $M_n(X,A) = \tOmegaOne_{n-1}(X,A) \oplus \Omega_n(X-A)$;
    \item for $(p,q) \in M_n(X,A)$, $\bd(p,q) = (-\bd p, \bd q - p)$.
    \item given a pushout square $f \colon (X,A) \to (X',A')$, $Mf$ acts in each degree $n$ as the direct sum of the isomorphisms $\tOmegaOne_{n-1}(X,A) \cong \tOmegaOne_{n-1}(X',A')$ and $\Omega_n(X-A) \cong \Omega_n(X' - A')$ induced by $f$.
\end{itemize}


The following result is then immediate.

\begin{lemma}\label{mapping-cone-invariant}
The functor $M \colon \CofPO \to \Ch_R$ sends all morphisms of $\CofPO$ to isomorphisms of $\Ch_R$. \qed
\end{lemma}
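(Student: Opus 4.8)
The plan is to treat this as an essentially formal consequence of the construction of the functor $\tOmegaOne \colon \CofPO \to \Ch^{\rightarrow}_R$ carried out above, together with the fact that any functor preserves isomorphisms. The substantive work has in fact already been packaged into the definition of $\tOmegaOne$: there we exhibited, for each morphism $f \colon (X,A) \to (X',A')$ of $\CofPO$, a commuting square of chain complexes both of whose horizontal arrows are isomorphisms. So the only thing left to verify is that such a square is an isomorphism in the arrow category $\Ch^{\rightarrow}_R$, after which the claim for $M$ follows by functoriality.

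First I would recall precisely why $\tOmegaOne(f)$ is an isomorphism of $\Ch^{\rightarrow}_R$. A morphism of $\Ch^{\rightarrow}_R$ is invertible exactly when both of its horizontal components are chain isomorphisms. By \cref{pushout-complement}, a morphism $f \colon (X,A) \to (X',A')$ of $\CofPO$ restricts to an isomorphism of complements $X - A \cong X' - A'$, inducing the bottom isomorphism $\Omega(X - A) \cong \Omega(X' - A')$; by \cref{pushout-char} the isomorphisms $C_n(X - A) \cong C_n(X' - A')$ restrict to isomorphisms $\tAOne_n(X,A) \cong \tAOne_n(X',A')$, which induce the top isomorphism $\tOmegaOne(X,A) \cong \tOmegaOne(X',A')$ on the pullbacks defining $\tOmegaOne$, and by construction all of these commute with the boundary operators. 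Hence both horizontal maps of the defining square of $\tOmegaOne(f)$ are chain isomorphisms, so $\tOmegaOne(f)$ is an isomorphism in $\Ch^{\rightarrow}_R$.

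Next I would invoke functoriality. Since $M$ is obtained from $\tOmegaOne$ by post-composing with the mapping cone functor, and every functor sends isomorphisms to isomorphisms, $M(f)$ is again an isomorphism. Explicitly, $M(f)$ acts in degree $n$ as the direct sum $\tOmegaOne_{n-1}(X,A) \oplus \Omega_n(X - A) \to \tOmegaOne_{n-1}(X',A') \oplus \Omega_n(X' - A')$ of the two isomorphisms just described; this is a degreewise isomorphism, and the compatibility of its summands with boundaries (already verified for $\tOmegaOne$) ensures that it commutes with the cone differential $\bd(p,q) = (-\bd p, \bd q - p)$, making it a chain isomorphism.

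I do not expect any genuine obstacle: the argument is purely formal, which is why the lemma is stated as immediate. The one point worth flagging is that a morphism of $\CofPO$ is a pushout square and is in general \emph{not} an isomorphism in $\CofPO$; what we exploit is not invertibility of $f$ but the fact, built into the construction of $\tOmegaOne$, that this functor sends every such $f$ to an isomorphism of $\Ch^{\rightarrow}_R$. Given that observation, the conclusion for $M$ is immediate from preservation of isomorphisms under functors.
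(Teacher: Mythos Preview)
Your proposal is correct and is precisely the unpacking of what the paper means by ``immediate'': the construction of $\tOmegaOne$ already showed that every morphism of $\CofPO$ is sent to an isomorphism in $\Ch^{\rightarrow}_R$, and $M$ is obtained by post-composing with the mapping cone functor, which like any functor preserves isomorphisms. (Note that the target of $M$ in the lemma statement should really be $\Ch_R$, as in \cref{def:M}; your argument handles this correctly.)
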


Our next goal is to establish a natural isomorphism $E$ from $M$ to the restriction of $\tOmega$ to $\CofPO$.
(For ease of notation, we also write $\tOmega$ for the restriction of $\tOmega \colon \Cof \to \Ch_R$ to $\CofPO$, relying on context to remove ambiguity.) 

The definition of $E$ is fairly involved and will only be given in \cref{iso-def}, since its well-definedness (\cref{iso-pres-bdry}) depends on preceding results.
To provide a roadmap, let us briefly summarize the strategy.
We begin by defining linear maps $L^j$ acting on chain complexes associated to the cofibration $A \cto X$.
Although at that point we could state the definition of $E$, since the required formula only involves the function $L^0$, we defer the definition, by first proving a sequence of lemmas (\cref{L-injective,L-truncate-pi,L-truncate-minimal,L-allowed,pi-allowed,L-bdry,L-Omega-to-Omega}) that establish well-definedness of $E.$

From here on, let $A \cto X$ denote an arbitrary cofibration. We will regularly write an arbitrary path in $X$ as $x = x_0 \cdots x_n$ and will write $a_i$ for the vertex $\pi x_i \in A_V$.

\begin{definition}\label{L-def}
For $n \geq 1$, we define a family of linear maps $L^j \colon C_{n-1}((X - A)^A) \to C_n(X)$ for $0 \leq j \leq  n-1$. Given a generator $x = x_0 \cdots x_{n-1} \in C_{n-1}((X - A)^A)$, let
\[
L^j(x) = \sum_{i = j}^{n-1} (-1)^i x_0 \cdots x_i a_i \cdots a_{n-1}.
\]
This definition extends by linearity to define each $L^j$ on all of $C_{n-1}((X - A)^A)$.
\end{definition}


As the definition above may appear very technical, we will briefly discuss some of the intuition behind it. Suppose we are given a path $x = x_0 \cdots x_{n-1}$ lying entirely in $\W$, \ie{} such that $h(x_i) = 1$ for all $i$. Then by \cref{pi-closest,pi-edges}, the graph $A$ contains a grid of squares formed by the path $x$, its projection to $A$, and the edges $x_i \to a_i$, pictured below for a path of length $4$.

\begin{figure}[H]
    \centering
    \begin{tikzpicture}[node distance=20pt]
        \node(0) {$\bullet$};
        \node(1) [right=of 0] {$\bullet$};
        \node(2) [right=of 1] {$\bullet$};
        \node(3) [right=of 2] {$\bullet$};
        \node(4) [right=of 3] {$\bullet$};
        \node(5) [below=of 0] {$\bullet$};
        \node(6) [right=of 5] {$\bullet$};
        \node(7) [right=of 6] {$\bullet$};
        \node(8) [right=of 7] {$\bullet$};
        \node(9) [right=of 8] {$\bullet$};
        
        \node[above=1pt of 0] {$x_0$};
        \node[above=1pt of 1] {$x_1$};
        \node[above=1pt of 2] {$x_2$};
        \node[above=1pt of 3] {$x_3$};
        \node[above=1pt of 4] {$x_4$};
        \node[below=1pt of 5] {$a_0$};
        \node[below=1pt of 6] {$a_1$};
        \node[below=1pt of 7] {$a_2$};        
        \node[below=1pt of 8] {$a_3$};  
        \node[below=1pt of 9] {$a_4$};                
        
        \draw[->] (0) to (1);
        \draw[->] (1) to (2);
        \draw[->] (2) to (3);
        \draw[->] (3) to (4);

        \draw[->] (5) to (6);
        \draw[->] (6) to (7);
        \draw[->] (7) to (8);        
        \draw[->] (8) to (9);    
        
        \draw[->] (0) to (5);        
        \draw[->] (1) to (6);        
        \draw[->] (2) to (7);        
        \draw[->] (3) to (8);    
        \draw[->] (4) to (9);        
    \end{tikzpicture}
\end{figure}

The element $L^0(x) \in C_n(X)$ is the alternating sum of all paths from the upper left corner (i.e.~$x_0$) to the lower right corner (i.e.~$a_4$) of this grid. 
It can be seen as a generalization of the generator of $\Omega_2(C_{2,2})$ of \cref{ex:commuting_square}.
Our strategy for proving the desired isomorphism, roughly speaking, involves showing that the cofibration conditions are sufficiently restrictive that any path in $X$ which forms part of a linear combination in $\tOmega_n(X,A)$ must either be contained entirely in $(X - A)$, or arise from a grid construction similar to the above (suitably generalized for paths not contained entirely in $\W$).

\begin{lemma}\label{L-injective}
For $n \geq 1$ and $0 \leq j \leq n-1$, the map $L^j \colon C_{n-1}((X - A)^A) \to C_{n}(X)$ is injective.
\end{lemma}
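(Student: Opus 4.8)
The plan is to produce an explicit one-sided inverse to $L^j$, which immediately gives injectivity. Recall that a basis of $C_{n-1}((X - A)^A)$ consists of the non-degenerate vertex sequences $u_0 \cdots u_{n-1}$ with each $u_i$ a vertex of $(X - A)^A$ (these need not be genuine paths), and similarly for $C_n(X)$. I would define a linear map $r \colon C_n(X) \to C_{n-1}((X - A)^A)$ on basis elements by
\[
r(u_0 \cdots u_n) = \begin{cases} u_0 \cdots u_{n-1} & \text{if all of } u_0, \dots, u_{n-1} \text{ lie in } (X - A)^A \text{ and } u_n = \pi u_{n-1}, \\ 0 & \text{otherwise.} \end{cases}
\]
This is well defined since it is prescribed on a basis; note that when the first case applies, $u_0 \cdots u_{n-1}$ is indeed a legitimate basis element of $C_{n-1}((X - A)^A)$, as the condition $u_i \neq u_{i+1}$ for $i \leq n-2$ is inherited from $u_0 \cdots u_n$.

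Next I would compute $r \circ L^j$ on a generator $x = x_0 \cdots x_{n-1}$, writing $a_i = \pi x_i \in A_V$ as in the surrounding text. In the term $(-1)^i\, x_0 \cdots x_i a_i \cdots a_{n-1}$ of $L^j(x)$, the vertices occupying positions $i+1, \dots, n$ are $a_i, \dots, a_{n-1}$, all lying in $A$ and hence not in $(X - A)^A$. So whenever $i < n-1$, one of the first $n$ vertices of the term lies in $A$, and $r$ kills it. Only the $i = n-1$ term survives: it is $(-1)^{n-1}\, x_0 \cdots x_{n-1} a_{n-1}$ with $a_{n-1} = \pi x_{n-1}$, and since $x_{n-1} \notin A_V$ while $a_{n-1} \in A_V$ we have $x_{n-1} \neq a_{n-1}$ (cf.~\cref{pi-closest}), so this sequence is non-degenerate and $r$ sends it to $x_0 \cdots x_{n-1} = x$. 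Hence $r \circ L^j = (-1)^{n-1}\, \mathrm{id}_{C_{n-1}((X - A)^A)}$, and $L^j$ is injective.

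The argument is mostly bookkeeping, and the only point requiring care — the part I would treat as the ``main obstacle'' — is keeping straight that the generators of the $C_\bullet$ modules are arbitrary non-degenerate vertex sequences rather than actual paths in the relevant digraphs, so that $r$ must be defined combinatorially on sequences; one then has to confirm that the surviving $i = n-1$ term is non-degenerate, which is precisely where $\pi x_{n-1} \neq x_{n-1}$ from \cref{pi-closest} is needed.
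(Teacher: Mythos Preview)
Your proof is correct and rests on the same core observation as the paper: the $i = n-1$ summand $(-1)^{n-1} x_0 \cdots x_{n-1} a_{n-1}$ is the unique term of $L^j(x)$ whose first $n$ entries all lie outside $A$, and this term recovers $x$. The paper packages this slightly differently, first proving injectivity for $L^{n-1}$ directly and then reducing the general case by noting that $L^j(p) - L^{n-1}(p)$ involves only paths with at least two vertices in $A$ while $L^{n-1}(p)$ involves paths with exactly one; your explicit retraction $r$ accomplishes the same separation in a single step and handles all $j$ uniformly, which is arguably cleaner. (One cosmetic point: the inequality $x_{n-1} \neq a_{n-1}$ follows immediately from $x_{n-1} \notin A_V$, so the appeal to \cref{pi-closest} is not really needed.)
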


\begin{proof}
We first prove this result in the case $j = n-1$. Here we may note that for any generator (\ie{} any non-degenerate path) $x = x_0 \cdots x_{n-1}$ of $C_{n-1}((X - A)^A)$, we have $L^{n-1}(x) = (-1)^{n-1} x_0 \cdots x_{n-1} a_{n-1}$. This is necessarily non-degenerate: the assumption that $x$ is non-degenerate implies $x_i \neq x_{i+1}$ for $0 \leq i \leq n-2$, and $x_{n-1} \neq a_{n-1}$ since $x_{n-1}$ is not in $A$. Taking as a basis for $C_n(X)$ the set of elements $(-1)^{n-1} y$ for all non-degenerate $n$-paths $y$ in $X$, we thus see that distinct generators of $C_{n-1}((X-A)^A)$ are sent to distinct elements of this basis. It follows that $L^{n-1}$ is injective.

Now consider an arbitrary $0 \leq j \leq n-2$. Suppose that for some $p \in C_{n-1}((X - A)^A)$ we have $L^j(p) = 0$. We may rewrite this equation as $L^j(p) - L^{n-1}(p) = -L^{n-1}(p)$. Now note that for any generator $x$ as above, we have $L^j(x) - L^{n-1}(x) = \sum_{i = j}^{n-2} (-1)^ix_0 \cdots x_i a_i \cdots a_{n-1}$.  Thus each nonzero term of $L^j(p) - L^{n-1}(p)$ corresponds to a path including at least two vertices of $A$, while each nonzero term of $L^{n-1}(p)$ corresponds to a path including exactly one vertex of $A$. Thus this equality can only hold if both sides are equal to zero. In particular, we have $L^{n-1}(p) = 0$; as shown above, this implies $p = 0$.
\end{proof}

\begin{lemma}\label{L-truncate-pi}
For $n \geq 2$, let $x = x_0 \cdots x_{n-1}$ denote a generator of $\tAOne_{n-1}(X,A)$. Suppose that for some $0 \leq j \leq n-2$, we have $a_j = a_{j+1}$. Then $L^{j'}(x) = L^0(x)$ for all $j' \leq j$.
\end{lemma}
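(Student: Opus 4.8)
The plan is to telescope the maps $L^{j}$ against each other and observe that the hypothesis $a_j = a_{j+1}$ forces each successive difference to be a \emph{degenerate} path, hence zero in $C_n(X)$. Directly from \cref{L-def}, for $0 \leq j' \leq n-2$ one computes
\[
L^{j'}(x) - L^{j'+1}(x) = (-1)^{j'}\, x_0 \cdots x_{j'}\, a_{j'} a_{j'+1} \cdots a_{n-1},
\]
a single signed path on the $n+1$ vertices $x_0, \dots, x_{j'}, a_{j'}, \dots, a_{n-1}$, so summing telescopically gives, for every $j' \leq j$,
\[
L^{0}(x) - L^{j'}(x) = \sum_{i=0}^{j'-1} (-1)^i\, x_0 \cdots x_i\, a_i a_{i+1} \cdots a_{n-1}.
\]
It therefore suffices to show that every summand on the right vanishes in $C_n(X)$.

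For this I would first record the relevant elementary facts. Since $x$ is a generator of $\tAOne_{n-1}(X,A)$, it is a regular allowed path all of whose vertices lie in $X - A$ and admit a path to $A$ (its last vertex has height $1$); in particular each $a_i = \pi x_i$ is well-defined, and $x_i \neq a_i$ because $x_i \notin A_V$. Consequently the path $x_0 \cdots x_i a_i \cdots a_{n-1}$ can only be non-degenerate if $a_\ell \neq a_{\ell+1}$ for all $i \leq \ell \leq n-2$. But for each index $i$ occurring in the telescoped sum we have $i \leq j'-1 \leq j-1 < j$, while $j \leq n-2$; hence $a_j$ and $a_{j+1}$ occur as consecutive vertices of the tail $a_i \cdots a_{n-1}$. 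Since $a_j = a_{j+1}$ by hypothesis, the path $x_0 \cdots x_i a_i \cdots a_{n-1}$ is degenerate, so it is $0$ in $C_n(X)$. Thus $L^0(x) - L^{j'}(x) = 0$, i.e.\ $L^{j'}(x) = L^0(x)$, for all $j' \leq j$.

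There is no serious obstacle here; the only points requiring (minor) care are that the $L^j$ take values in the quotient module $C_n(X)$, so that degenerate paths genuinely vanish, and the index bookkeeping ensuring that the repeated pair $a_j a_{j+1}$ lands inside each tail appearing in the telescoped sum. Both become immediate once the difference formula for $L^{j'} - L^{j'+1}$ is written out.
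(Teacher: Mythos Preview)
Your argument is correct and is essentially the paper's own proof. The telescoping step is harmless but unnecessary: since $L^{j'}(x)$ is by definition the tail of the sum defining $L^0(x)$ starting at $i=j'$, the difference $L^0(x)-L^{j'}(x)=\sum_{i=0}^{j'-1}(-1)^i x_0\cdots x_i a_i\cdots a_{n-1}$ is immediate, and the rest of your argument (each such term contains the degenerate substring $a_j a_{j+1}$, hence vanishes in $C_n(X)$) is exactly what the paper does.
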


\begin{proof}
By definition, $L^0(x) = \sum\limits_{i = 0}^{n-1} (-1)^i x_0 \cdots x_i a_i \cdots a_{n-1}$. Thus we must show that all terms of this sum for which $i < j$ are zero. This follows from the fact that each such term includes the substring $a_j a_{j+1}$, and is therefore degenerate.
\end{proof}

\begin{lemma}\label{L-truncate-minimal}
For $n \geq 2$, let $x = x_0 \cdots x_{n-1}$ denote a generator of $\tAOne_{n-1}(X,A)$. Let $0 \leq j \leq n-1$ be minimal such that $h(x_i) = 1$ for all $i > j$. Then $L^0(x) = L^j(x)$.
\end{lemma}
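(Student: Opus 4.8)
The plan is to reduce everything to \cref{L-truncate-pi}: I will show that, under the hypotheses of the lemma, one always has $a_j = a_{j+1}$ whenever $j \ge 1$, after which \cref{L-truncate-pi} immediately gives $L^0(x) = L^j(x)$.

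First I would dispatch the trivial case $j = 0$, where the claim holds by definition of $L^0$. So assume $j \ge 1$. Since $x$ is a generator of $\tAOne_{n-1}(X,A)$, its last vertex $x_{n-1}$ lies in $\W$, so $h(x_{n-1}) = 1$; in particular $j = n-2$ already satisfies the defining condition ``$h(x_i) = 1$ for all $i > j$'', so minimality forces $j \le n-2$, and in particular both $x_j$ and $x_{j+1}$ are defined with $j + 1 > j$, hence $h(x_{j+1}) = 1$. (Note also that every $x_i$ admits a path to $A$, obtained by concatenating the remaining edges of $x$ with an edge from $x_{n-1}$ into $A$, so the vertices $a_i = \pi x_i$ and the maps $L^j$ are all defined on $x$.)

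The key step is to extract from the minimality of $j$ the \emph{strict} inequality $h(x_j) > h(x_{j+1})$. Indeed, if $h(x_j)$ were equal to $1$, then $h(x_i) = 1$ would hold for all $i > j - 1$, contradicting minimality of $j$; since $x_j \notin A_V$ forces $h(x_j) \ge 1$, we conclude $h(x_j) \ge 2 > 1 = h(x_{j+1})$. Applying \cref{pi-edges} to the edge $x_j \to x_{j+1}$, the fact that $h(x_j) \ne h(x_{j+1})$ rules out case \ref{pi-edges-equal}, leaving case \ref{pi-edges-greater}, which gives exactly $a_j = \pi x_j = \pi x_{j+1} = a_{j+1}$. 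Now, since $0 \le j \le n-2$ and $a_j = a_{j+1}$, \cref{L-truncate-pi} yields $L^{j'}(x) = L^0(x)$ for all $j' \le j$; taking $j' = j$ completes the proof.

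There is no serious obstacle here; the one point requiring care is precisely the derivation of the strict inequality $h(x_j) > h(x_{j+1})$, since without it \cref{pi-edges} would only provide an edge $a_j \to a_{j+1}$ (case \ref{pi-edges-equal}) rather than the equality $a_j = a_{j+1}$ needed to apply \cref{L-truncate-pi}.
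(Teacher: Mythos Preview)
Your proof is correct and follows the same strategy as the paper: use minimality of $j$ to force two consecutive projections $a_k, a_{k+1}$ to coincide via \cref{pi-edges}, then invoke \cref{L-truncate-pi}. The paper phrases this via $h(x_{j-1}) \neq 1$ and $a_{j-1} = a_j$, whereas you argue via $h(x_j) \neq 1$ and $a_j = a_{j+1}$; given the strict inequality $i > j$ in the statement, your indexing is the one that matches the hypotheses of \cref{L-truncate-pi} exactly.
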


\begin{proof}
If $j = 0$ then the statement is a tautology, so assume otherwise.  Furthermore, we may note that because $x$ is an allowed path with its last vertex in $\W$, we have $h(x_{n-1}) = 1$, so that $j < n - 1$.

By the minimality of $j$, we have $h(x_{j-1}) \neq 1$; as $x_{j-1}$ is not in $A$, it follows that $h(x_{j-1}) \geq 2$. As there is an edge $x_{j-1} \to x_j$, \cref{pi-edges} implies that $a_{j-1} = a_j$. The stated result thus follows from \cref{L-truncate-pi}.
\end{proof}

\begin{lemma}\label{L-allowed}
For all $n \geq 1$ and $p \in C_{n-1}((X-A)^A)$, we have $L^0(p) \in \tA_{n}(X,A)$
if and only if $p \in \tAOne_{n-1}(X,A)$.
\end{lemma}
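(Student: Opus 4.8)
The plan is to analyze the individual summands of $L^0(x)$ for a generator $x = x_0 \cdots x_{n-1}$ of $C_{n-1}((X-A)^A)$, using \cref{pi-closest} and \cref{pi-edges} to control the edges and degeneracies that occur. First I would fix notation, writing $p = \sum_x c_x x$ as a linear combination of distinct non-degenerate paths $x = x_0\cdots x_{n-1}$ with all vertices in $(X-A)^A$, and recalling that $A_n(X)$ sits inside $C_n(X)$ as the span of the non-degenerate allowed tuples. Consequently, an element of $C_n(X)$ lies in $A_n(X)$ precisely when every non-degenerate tuple occurring in it with nonzero coefficient is an allowed path in $X$, and it lies in $\tA_n(X,A)$ precisely when in addition each such tuple contains a vertex of $X - A$. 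As in the proof of \cref{L-injective}, for a generator $x$ the summand $x_0 \cdots x_i a_i \cdots a_{n-1}$ of $L^0(x)$ with $i < n-1$ is either degenerate or involves at least two distinct vertices of $A$, while the $i = n-1$ summand $(-1)^{n-1}x_0\cdots x_{n-1}a_{n-1}$ is non-degenerate and involves exactly one vertex of $A$.

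For the implication \emph{$L^0(p)\in\tA_n(X,A) \Rightarrow p\in\tAOne_{n-1}(X,A)$}, I would isolate this ``one-$A$-vertex part'' of $L^0(p)$. The tuples $x_0\cdots x_{n-1}a_{n-1}$ are pairwise distinct for distinct generators $x$ (the first $n$ entries recover $x$) and cannot coincide with any higher-$A$-vertex summand of another generator, so the coefficient of $x_0\cdots x_{n-1}a_{n-1}$ in $L^0(p)$ is exactly $\pm c_x$. Hence if $L^0(p)\in\tA_n(X,A)\subseteq A_n(X)$, then for each $x$ with $c_x\neq 0$ the tuple $x_0\cdots x_{n-1}a_{n-1}$ is an allowed path in $X$. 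Allowedness forces the edges $x_k\to x_{k+1}$ (so $x$ is an allowed path, lying in $X-A$ since all its vertices do) and the edge $x_{n-1}\to a_{n-1}=\pi x_{n-1}$, which by \cref{pi-closest} forces $h(x_{n-1})=1$, i.e.\ $x_{n-1}\in\W$. Thus every generator appearing in $p$ is an allowed path in $X-A$ with last vertex in $\W$, so $p\in\tAOne_{n-1}(X,A)$.

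For the converse \emph{$p\in\tAOne_{n-1}(X,A) \Rightarrow L^0(p)\in\tA_n(X,A)$}, by linearity it suffices to treat a single generator $x$ of $\tAOne_{n-1}(X,A)$, namely an allowed path in $X-A$ with $h(x_{n-1})=1$. Using \cref{L-truncate-minimal} I would replace $L^0(x)$ by $L^j(x)$, where $j$ is minimal with $h(x_i)=1$ for all $i>j$; when $j\geq 1$, minimality gives $h(x_j)\geq 2$, and \cref{pi-edges} applied to the edge $x_j\to x_{j+1}$ (with $h(x_{j+1})=1$) then forces $a_j=a_{j+1}$, so the $i=j$ summand of $L^j(x)$ is degenerate. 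Hence in every surviving summand $x_0\cdots x_i a_i\cdots a_{n-1}$ one has $h(x_i)=\cdots=h(x_{n-1})=1$: the edges $x_k\to x_{k+1}$ come from $x$ being allowed, the edge $x_i\to a_i$ from \cref{pi-closest}, and the edges $a_k\to a_{k+1}$ from \cref{pi-edges} (the equal-height case), so each such term is an allowed path in $X$; since it contains $x_0\in X-A$, we conclude $L^0(x)=L^j(x)\in\tA_n(X,A)$.

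The main obstacle I expect is the bookkeeping in the converse direction: controlling the summands of $L^0(x)$ in which some intermediate $x_i$ has height $\geq 2$, so that the edge $x_i\to\pi x_i$ need not exist. The resolution is that \cref{L-truncate-minimal} together with \cref{pi-edges} makes all such summands (including the borderline index $i=j$) degenerate, so they vanish in $C_n(X)$ rather than obstructing allowedness; verifying this carefully is the only delicate point. The forward implication is comparatively routine once one notes that the leading tuples are linearly independent and that $A_n(X)$ is a coordinate subspace of $C_n(X)$.
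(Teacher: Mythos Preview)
Your proposal is correct and follows essentially the same approach as the paper: for the implication $L^0(p)\in\tA_n(X,A)\Rightarrow p\in\tAOne_{n-1}(X,A)$ you isolate the ``one-$A$-vertex'' summands $(-1)^{n-1}x_0\cdots x_{n-1}a_{n-1}$ and use their linear independence (exactly as the paper does via $L^{n-1}$), and for the converse you invoke \cref{L-truncate-minimal} together with \cref{pi-closest,pi-edges} to verify each surviving summand is an allowed path. The only cosmetic difference is that you explicitly argue the borderline $i=j$ summand is degenerate, whereas the paper silently absorbs this by indexing $j$ as the minimal index with $h(x_i)=1$ for all $i\geq j$; either way the same degeneracy (via \cref{pi-edges}) is doing the work.
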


\begin{proof}
We first show that if $p \in \tAOne_{n-1}(X,A)$ then $L^0(p) \in \tA_{n}(X,A)$.
It suffices to show that, for all generators $x \in \tAOne_{n}(X,A)$, $L^0(x)$ is allowed and intersects $X-A$. By \cref{L-truncate-minimal}, to show that $L^0(x)$ is allowed, it suffices to show that each path $x_1 \cdots x_i a_i \cdots a_{n-1}$ is allowed for $i \geq j$, where $j$ is minimal such that $h(x_i) = 1$ for all $i \geq j$. Therefore, let $x = x_0 \cdots x_{n-1}$ be an allowed path in $X$ with its last vertex in $A$, and consider the path
\[
x_0 \cdots x_i a_i \cdots a_{n-1}
\]
for some $i$ satisfying $j \leq i \leq n-1$. For each such $0 \leq k \leq i - 1$, there is an edge $x_k \to x_{k+1}$ by the assumption that $p$ is allowed. Similarly, for $i \leq k < n - 1$, the assumption that $p$ is allowed implies that there is an edge $x_k \to x_{k+1}$. Furthermore, the assumption that $j \leq i$ implies that $h(x_k) = h(x_{k+1}) = 1$; these two facts imply the existence of an edge $a_k \to a_{k+1}$ by \cref{pi-edges}. It remains to be shown that there is an edge $x_i \to a_i$; this is immediate from the assumption that $h(x_i) = 1$ and \cref{pi-closest}.

Now suppose that $L^0(p) \in \tA_{n}(X,A)$. We may write $p$ as a linear combination of generators $\sum_{k = 1}^m c_k x^k$, where each $c_k$ is a coefficient and each $x^k$ is an allowed path in $(X-A)^A$. Then the sum of all terms of $L^0(p)$ corresponding to paths which include exactly one vertex of $A$ is
\[
L^{n-1}(p) = \sum_{k=1}^{m} c_k (-1)^{n-1} x^k_0 \cdots x^k_{n-1} a^k_{n-1}
\]
where $a^k_{n-1} = \pi x^k_{n-1}$. Since the generators $x^k$ are distinct, the terms of this sum are distinct as well. Thus there are no cancellations among these terms. As all remaining terms of $L^0(p)$ correspond to paths including at least two vertices of $A$, none of these terms can cancel with those of the sum above either. Therefore, since  $L^0(p)$ is allowed, it must be the case that each path $x^k_0 \cdots x^k_{n-1} a_{n-1}$ is allowed. In particular, this implies that $x^k_0 \cdots x^k_{n-1}$ is allowed. Furthermore, the presence of an edge $x^k_{n-1} \to a^k_{n-1}$ implies that $h(x^k_{n-1}) = 1$. Thus $p \in \tAOne_{n-1}(X,A)$.
\end{proof}

\begin{definition}\label{pi-linear-def}
For $n \geq 0$, we define a linear map $\pi \colon C_{n}((X-A)^A) \to C_{n}(A)$ by sending each generator $x = x_0 \cdots x_n$ to $a_0 \cdots a_n$.
\end{definition}

\begin{lemma}\label{pi-allowed}
If $p \in \tAOne_{n}(X,A)$, then $\pi p \in A_n(A)$.
\end{lemma}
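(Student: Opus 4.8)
The plan is to argue generator by generator. Since $\pi$ is $R$-linear and $A_n(A)$ is a submodule of $C_n(A)$, it suffices to fix a generator $x = x_0 \cdots x_n$ of $\tAOne_n(X,A)$ — that is, an allowed regular path in $X - A$ whose final vertex $x_n$ lies in $\W$ — and show that $\pi x = a_0 \cdots a_n$, where $a_i := \pi x_i$, lies in $A_n(A)$; concretely, I will show it is either degenerate (hence $0$ in $C_n(A)$) or an allowed path of $A$. The case $n = 0$ is trivial since $a_0 \in A_V$, so assume $n \geq 1$. First I would record two preliminary observations: every $x_i$ admits a path to $A$ (follow $x$ from $x_i$ to $x_n$, then take an edge of $X$ from $x_n$ into $A$), so each $x_i$ has a finite height and in particular $h(x_n) = 1$; and $h(x_i) \leq h(x_{i+1}) + 1$ for every $i$, obtained by prepending the edge $x_i \to x_{i+1}$ to a minimal path from $x_{i+1}$ to $A$.

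I would then split the argument according to whether the heights are weakly decreasing along $x$. Suppose first that some edge increases height, say $h(x_i) < h(x_{i+1})$ for some $0 \leq i \leq n-1$. Then $h(x_{i+1}) \geq 2 > 1 = h(x_n)$, so there is a least index $k$ with $i < k \leq n$ and $h(x_k) < h(x_{i+1})$; this forces $k \geq i + 2$, hence $i < k - 1 \leq n - 1$. By minimality of $k$ we have $h(x_{k-1}) \geq h(x_{i+1}) > h(x_k)$, which combined with $h(x_{k-1}) \leq h(x_k) + 1$ gives $h(x_{k-1}) = h(x_k) + 1$. Applying \cref{pi-edges} to the edge $x_{k-1} \to x_k$ then yields $a_{k-1} = a_k$ (only the second alternative of that lemma is possible), so $\pi x$ contains a repeated consecutive vertex and is degenerate, hence lies in $A_n(A)$.

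In the remaining case $h(x_0) \geq h(x_1) \geq \cdots \geq h(x_n) = 1$, I would apply \cref{pi-edges} to each edge $x_i \to x_{i+1}$: either $h(x_i) = h(x_{i+1}) + 1$ and $a_i = a_{i+1}$, or $h(x_i) = h(x_{i+1})$ and there is an edge $a_i \to a_{i+1}$ — which is an edge of $A$, since $A$ is an induced subgraph of $X$. If the first alternative occurs for any $i$, then $\pi x$ is degenerate, as above. Otherwise the second alternative holds for all $i$, so $a_0 \to a_1 \to \cdots \to a_n$ is a path in $A$, \ie{} $a_0 \cdots a_n$ is allowed; thus $\pi x \in A_n(A)$. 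In every case $\pi x \in A_n(A)$, and $R$-linearity of $\pi$ completes the proof for all $p \in \tAOne_n(X,A)$.

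The one step I expect to need care is the index bookkeeping in the increasing-height case: pinning down the least index $k > i$ at which the height falls back below $h(x_{i+1})$, and checking that $i < k - 1 < n$ so that $x_{k-1} \to x_k$ is genuinely an edge along $x$ and \cref{pi-edges} applies there. Everything else is a direct appeal to \cref{pi-edges} (and, implicitly through it, \cref{pi-closest}).
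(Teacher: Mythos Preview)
Your proof is correct and follows essentially the same approach as the paper: reduce to a generator $x$, then use \cref{pi-edges} to show that $\pi x$ is either an allowed path in $A$ or degenerate. The paper's case analysis is simpler, though: rather than splitting on whether the sequence of heights is weakly decreasing, it just picks the \emph{maximal} index $j$ with $h(x_j) > 1$ (if any) and applies \cref{pi-edges} to the edge $x_j \to x_{j+1}$ directly, which immediately yields $a_j = a_{j+1}$ and renders your separate ``increasing height'' case unnecessary.
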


\begin{proof}
It suffices to consider the case where $p$ is a generator, i.e., an allowed path $x = x_0 \cdots x_n$; in this case we have $\pi x = a_0 \cdots a_n$. Our assumption that $x \in \tAOne_{n}(X,A)$ implies $h(x_n) = 1$, and that $h(x_i) \geq 1$ for all $i$, since $x_n$ is not in $A_V$ and there are no edges out of $A$. 

If $h(x_i) = 1$ for all $i$, then $\pi x$ is allowed by \cref{pi-edges}.(\ref{pi-edges-equal}). Now suppose that $h(x_j) > 1$ for some $j$; without loss of generality we may choose $j$ to be maximal, so that $h(x_{j+1}) = 1$. Then by \cref{pi-edges}.(\ref{pi-edges-greater}), we have $a_j = a_{j+1}$. Thus $\pi x$ is degenerate, and hence equal to zero.
\end{proof}

\begin{lemma}\label{L-bdry}
For $n \geq 1$ and $p \in \tAOne_{n-1}(X,A)$, we have $\bd L^0(p) = -L^0(\bd p) - p + \pi(p)$.
\end{lemma}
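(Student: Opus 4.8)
The plan is to reduce to a single generator and then recognize the identity as the classical ``prism'' (cone-homotopy) relation, proved by a direct expansion of the boundary formula. First I would note that by linearity it suffices to treat $p = x = x_0\cdots x_{n-1}$ a generator of $\tAOne_{n-1}(X,A)$; here every $x_i$ admits a path to $A$ (because $x_{n-1}\in\W$ and there are no edges out of $A$), so the vertices $a_i = \pi x_i$ are defined and $L^0(x) = \sum_{i=0}^{n-1}(-1)^i x_0\cdots x_i a_i\cdots a_{n-1}$ makes sense in $C_n(X)$. The identity to prove, $\bd L^0(x) = -L^0(\bd x) - x + \pi(x)$, simply says that $L^0$ is a degree-$(+1)$ chain homotopy exhibiting $\pi\simeq\mathrm{id}$.

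Next I would set $\sigma_i := x_0\cdots x_i a_i\cdots a_{n-1}$, so that $\bd L^0(x) = \sum_i (-1)^i\bd\sigma_i$, and expand each $\bd\sigma_i$ by the alternating-sum formula. I would then partition the deletions occurring in $\bd\sigma_i$ into three groups: (i) deleting one of $x_0,\dots,x_{i-1}$; (ii) deleting one of $a_{i+1},\dots,a_{n-1}$; and (iii) the two ``middle'' deletions, of $x_i$ and of $a_i$. Summed over $i$, the terms of group (iii) telescope: the term of $\bd\sigma_i$ deleting its last $x$-vertex $x_i$ cancels the term of $\bd\sigma_{i-1}$ deleting its first $a$-vertex $a_{i-1}$, so the only survivors are $+a_0\cdots a_{n-1} = \pi(x)$, coming from $\bd\sigma_0$ by deleting $x_0$, and $-x_0\cdots x_{n-1} = -x$, coming from $\bd\sigma_{n-1}$ by deleting $a_{n-1}$. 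To handle groups (i) and (ii) I would expand $L^0$ applied to a single generic summand $x_0\cdots\widehat{x_m}\cdots x_{n-1}$ of $\bd x$, split that in turn according to whether the chosen ``cut point'' of $L^0$ lies before or after the gap at $x_m$, and match the two resulting families of terms against groups (i) and (ii) respectively, by a straightforward reindexing of summation variables together with a sign check. Assembling the three groups gives $\bd L^0(x) = -L^0(\bd x) - x + \pi(x)$.

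I do not expect a genuine obstacle: the argument is purely formal, and since $\bd$, $L^0$ and $\pi$ all descend to the regular chain complexes $C_\bullet$, any degenerate path appearing in an intermediate sum is automatically zero on both sides and needs no separate discussion (in particular the projecting-decomposition hypothesis is not used here, only that $\pi$ is defined vertex-by-vertex). The one thing requiring care is the sign bookkeeping in the three-way split and the reindexing that identifies groups (i) and (ii) with the two halves of $-L^0(\bd x)$; I would present this by tracking a single generic term, so that the cancellations are visibly term-by-term rather than established by an opaque global count.
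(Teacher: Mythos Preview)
Your proposal is correct and follows essentially the same approach as the paper: reduce to a generator, expand $\bd L^0(x)$ as a double sum, observe that the ``middle'' deletions telescope leaving exactly $\pi(x)$ and $-x$, and then reindex the remaining terms to identify them with $-L^0(\bd x)$. The paper organizes the computation by first writing out the full double sum and then reversing the order of summation, whereas you frame it as the standard prism formula and partition by deletion type, but the underlying cancellations and reindexings are the same.
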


\begin{proof}
It suffices to show the given equality when $p$ is a generator $x = x_0 \cdots x_{n-1}$. In this case we compute:

\begin{align*}
    \bd L^0(x) & = \bd\left( \sum_{i = 0}^{n-1} (-1)^i x_0 \cdots x_i a_i \cdots a_{n-1}\right) \\ 
    & = \sum_{i = 0}^{n-1} (-1)^i \bd x_0 \cdots x_i a_i \cdots a_{n-1} \\  
    & = \sum_{i=0}^{n-1} (-1)^{i} (\sum_{j = 0}^{i} (-1)^{j} x_0 \cdots \widehat{x}_j \cdots x_i a_i \cdots a_{n-1} + \sum_{j = i}^{n-1} (-1)^{j+1} x_0 \cdots x_i a_i \cdots \widehat{a}_{j} \cdots a_{n-1}) \\
    & = \sum_{i = 0}^{n-1} \sum_{j = 0}^{i} (-1)^{i+j} x_0 \cdots \widehat{x}_j \cdots x_i a_i \cdots a_{n-1} + \sum_{i = 0}^{n-1} \sum_{j = i}^{n-1} (-1)^{i+j+1} x_0 \cdots x_i a_i \cdots \widehat{a}_{j} \cdots a_{n-1} \\
\end{align*}

At this point, we may note that for each $1 \leq i \leq n - 1$, the $(i, i)$-term of the left summation is $x_0 \cdots x_{i-1} a_i \cdots a_{n-1}$, while the $(i-1,i-1)$-term of the right summation is $-x_0 \cdots x_{i-1} a_i \cdots a_{n-1}$. Thus these terms cancel. Furthermore, the $(0,0)$-term of the left summation is $a_0 \cdots a_{n-1}=\pi x$,  and the  $(n-1,n-1)$-term of the right summation is equal to $-x_0 \cdots x_{n-1} = -x$.

For the sake of readability, we will set these terms aside and show that the remaining part of the sum is equal to $-L^0(\bd x)$. We are thus left to consider:
\[
\sum_{i = 1}^{n-1} \sum_{j = 0}^{i-1}(-1)^{i+j} x_0 \cdots \widehat{x}_j \cdots x_i a_i \cdots a_{n-1} + \sum_{i = 0}^{n-2} \sum_{j = i+1}^{n-1} (-1)^{i+j+1} x_0 \cdots x_i a_i \cdots \widehat{a}_{j} \cdots a_{n-1}
\]
We reverse the order of summation in both terms, summing first over $j$, then over $i$. Thus we obtain:

\begin{align*}
& \sum_{j = 0}^{n-2} \sum_{i=j+1}^{n-1} (-1)^{i+j} x_0 \cdots \widehat{x}_j \cdots x_i a_i \cdots a_{n-1} + \sum_{j = 1}^{n-1} \sum_{i = 0}^{j-1} (-1)^{i+j+1} x_0 \cdots x_i a_i \cdots \widehat{a}_{j} \cdots a_{n-1} \\
= & \sum_{j = 0}^{n-1} \sum_{i=j+1}^{n-1} (-1)^{i+j} x_0 \cdots \widehat{x}_j \cdots x_i a_i \cdots a_{n-1} + \sum_{j = 0}^{n-1} \sum_{i = 0}^{j-1} (-1)^{i+j+1} x_0 \cdots x_i a_i \cdots \widehat{a}_{j} \cdots a_{n-1} \\
= & \sum_{j = 0}^{n-1} (-1)^j (\sum_{i=j+1}^{n-1} (-1)^{i} x_0 \cdots \widehat{x}_j \cdots x_i a_i \cdots a_{n-1} + \sum_{i = 0}^{j-1} (-1)^{i+1} x_0 \cdots x_i a_i \cdots \widehat{a}_{j} \cdots a_{n-1}) \\
\end{align*}

For each $0 \leq j \leq n - 1$, let $y^j = \bd_j x$. That is, for $0 \leq i < j$, we have $y^j_i = x_i$, while for $j \leq i \leq n - 2$ we have $y^j_i = x_{i+1}$. For each $j, i$ let $b^j_i = \pi y^j_i$. Then we may rewrite the expression above as:

\begin{align*}
& \sum_{j = 0}^{n-1} (-1)^j (\sum_{i=j+1}^{n-1} (-1)^{i} y^j_0 \cdots y^j_{i-1} b^j_{i-1} \cdots b^j_{n-2} + \sum_{i = 0}^{j-1} (-1)^{i+1} y^j_0 \cdots y^j_i b^j_i  \cdots b^j_{n-2}) \\
= & \sum_{j = 0}^{n-1} (-1)^j (\sum_{i=j}^{n-2} (-1)^{i+1} y^j_0 \cdots y^j_{i} b^j_{i} \cdots b^j_{n-2} +  \sum_{i = 0}^{j-1} (-1)^{i+1} y^j_0 \cdots y^j_i b^j_i  \cdots b^j_{n-2}) \\
= & \sum_{j = 0}^{n-1} (-1)^j \sum_{i = 0}^{n-2} (-1)^{i+1} y^j_0 \cdots y^j_i b^j_i \cdots b^j_{n-2} \\
= & \sum_{j = 0}^{n-1} (-1)^j (-L^0(y^j)) \\
= & -L^0(\sum_{j = 0}^{n-1} (-1)^j y^j) \\
= & -L^0(\bd x) \\
\end{align*}
Thus the statement is proven.
\end{proof}

\begin{lemma}\label{L-Omega-to-Omega}
For $n \geq 1$, if $p \in \tOmegaOne_{n-1}(X,A)$, then $L^0(p) \in \tOmega_{n}(X,A)$.
\end{lemma}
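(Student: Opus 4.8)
The plan is to unwind the definition $\tOmega_n(X,A) = \Omega_n(X) \cap \tA_n(X,A)$ and verify the two memberships separately, feeding in the lemmas already established. Since $p \in \tOmegaOne_{n-1}(X,A) \subseteq \tAOne_{n-1}(X,A)$, \cref{L-allowed} immediately yields $L^0(p) \in \tA_n(X,A)$, and in particular $L^0(p) \in A_n(X)$. It therefore remains only to check that $L^0(p) \in \Omega_n(X)$.

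By \cref{omega_kernel}, $\Omega_n(X)$ is the kernel of the map $A_n(X) \to C_{n-1}(X)/A_{n-1}(X)$, so---having already placed $L^0(p)$ in $A_n(X)$---it suffices to show $\bd L^0(p) \in A_{n-1}(X)$. Here I would invoke \cref{L-bdry}, which gives $\bd L^0(p) = -L^0(\bd p) - p + \pi(p)$, and check that each of the three summands lies in $A_{n-1}(X)$. First, $p \in \tAOne_{n-1}(X,A) \subseteq A_{n-1}(X - A) \subseteq A_{n-1}(X)$. Second, \cref{pi-allowed} gives $\pi(p) \in A_{n-1}(A) \subseteq A_{n-1}(X)$. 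Third, since $p \in \tOmegaOne_{n-1}(X,A)$, the pullback square defining $\tOmegaOne_{n-1}(X,A)$ shows $\bd p \in \tAOne_{n-2}(X,A)$, whence $L^0(\bd p) \in \tA_{n-1}(X,A) \subseteq A_{n-1}(X)$ by a second application of \cref{L-allowed} (in the degenerate case $n = 1$ this term simply vanishes, as $\bd p = 0$). Thus $\bd L^0(p) \in A_{n-1}(X)$, so $L^0(p) \in \Omega_n(X)$, and combined with the previous paragraph $L^0(p) \in \Omega_n(X) \cap \tA_n(X,A) = \tOmega_n(X,A)$.

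There is no real obstacle here beyond careful bookkeeping of the degree shifts and the boundary case $n = 1$; all of the genuine content sits in the preceding lemmas \cref{L-allowed,L-bdry,pi-allowed}, which we are free to assume. The only point deserving a second glance is that the boundary appearing in \cref{L-bdry} is the ordinary alternating-sum boundary on $C_\bullet(X)$ rather than the truncated boundary used to define the complex $\tOmega(X,A)$; but since every term of $\bd L^0(p)$ is already an honest path in $X$ and we only need the weaker conclusion $\bd L^0(p) \in A_{n-1}(X)$, this discrepancy is harmless.
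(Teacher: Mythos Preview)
Your proof is correct and follows essentially the same route as the paper's: use \cref{L-allowed} to place $L^0(p)$ in $\tA_n(X,A)$, then invoke \cref{L-bdry} to expand $\bd L^0(p)$ and verify each summand is allowed via \cref{L-allowed}, \cref{pi-allowed}, and the hypothesis on $p$. Your write-up is slightly more explicit in citing \cref{omega_kernel} and the pullback description of $\tOmegaOne$, but the argument is the same.
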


\begin{proof}
From \cref{L-allowed}, we have that $L^0(p) \in \tA_n(X,A)$. To see that $\bd L^0(p)$ is allowed, recall that by \cref{L-bdry}, $\bd L^0(p) = -L^0(\bd p) - p + \pi p$. We consider each of these terms individually.

\begin{itemize}
    \item $\bd p$ is allowed by assumption, hence $L^0(\bd p)$ is allowed by \cref{L-allowed}.
    \item $p$ is allowed by assumption.
    \item $\pi p$ is allowed by \cref{pi-allowed}. \qedhere
\end{itemize}
\end{proof}

From \cref{L-Omega-to-Omega}, it follows that $L^0$ restricts to define a linear map $\tOmegaOne_{n-1}(X,A) \to \tOmega_n(X,A)$.

\begin{definition}\label{iso-def}
For $n \geq 0$, we define a linear map $E \colon M_n(X,A) \to \tOmega_n(X,A)$, sending an element $(p,q) \in M_n(X,A) = \tOmegaOne_{n-1}(X,A) \oplus \Omega_{n}(X-A)$ to $L^0(p) + q$.
\end{definition}

\begin{proposition}\label{iso-pres-bdry}
The maps $E \colon M_n(X,A) \to \tOmega_n(X,A)$ of \cref{iso-def} define a map of chain complexes $E \colon M(X,A) \to \tOmega(X,A)$. Moreover, these maps define a natural transformation $E \colon M \Rightarrow \tOmega$.
\end{proposition}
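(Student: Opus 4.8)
The plan is to establish the three facts packaged in the statement: that each map $E\colon M_n(X,A)\to\tOmega_n(X,A)$ is well defined, that the collection commutes with differentials, and that it is natural on $\CofPO$. For well-definedness, one notes that $L^0(p)\in\tOmega_n(X,A)$ by \cref{L-Omega-to-Omega} (which, as recorded just before \cref{iso-def}, makes $L^0$ restrict to a map $\tOmegaOne_{n-1}(X,A)\to\tOmega_n(X,A)$), while $q\in\Omega_n(X-A)\subseteq\tOmega_n(X,A)$ because a path contained in $X-A$ is allowed in $X$, has allowed boundary in $X$, and meets $X-A$; hence $E(p,q)=L^0(p)+q$ indeed lands in $\tOmega_n(X,A)$.

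For the chain map property I would recall from \cref{Omega-U-sum} that the differential $\bd^{\tOmega}$ of $\tOmega(X,A)$ is computed by taking the ordinary boundary in $\Omega_{n-1}(X)$ and then deleting every term whose path lies entirely in $A$. Applying \cref{L-bdry} to $p$ gives $\bd L^0(p)=-L^0(\bd p)-p+\pi(p)$. Here every path occurring in $-L^0(\bd p)$ begins at a vertex of $(X-A)^A$ (using that $\bd p\in\tOmegaOne_{n-2}(X,A)$ together with the shape of $L^0$; cf.~\cref{L-allowed}), and every path occurring in $-p$ lies in $X-A$, so both meet $X-A$; on the other hand, by \cref{pi-allowed} every path of $\pi(p)$ lies entirely in $A$. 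These two sets of paths are disjoint, so the deletion operator cannot obscure a genuine cancellation — it simply removes the $\pi(p)$-summand — and we obtain $\bd^{\tOmega}L^0(p)=-L^0(\bd p)-p$. Since $\bd q$ is already supported in $X-A$, $\bd^{\tOmega}$ fixes it, so
\[
\bd^{\tOmega}E(p,q)=-L^0(\bd p)-p+\bd q=E\bigl(-\bd p,\ \bd q-p\bigr)=E\bigl(\bd(p,q)\bigr),
\]
which is exactly compatibility with the differential $\bd(p,q)=(-\bd p,\bd q-p)$ of $M(X,A)$.

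For naturality, fix a morphism $f\colon(X,A)\to(Y,B)$ of $\CofPO$. On the summand $\Omega_n(X-A)$ of $M_n(X,A)$ the square commutes because $\Omega_n(f)$ and the projection defining $\tOmega f$ both act on chains supported in $X-A$ simply as the isomorphism $\Omega_n(X-A)\cong\Omega_n(Y-B)$ induced by $f$ (\cref{pushout-char,pushout-complement}), out of which $Mf$ is assembled. On the summand $\tOmegaOne_{n-1}(X,A)$, I would expand, for a generating path $x=x_0\cdots x_{n-1}$ with $a_i:=\pi x_i$,
\[
\Omega_n(f)\bigl(L^0(x)\bigr)=\sum_{i=0}^{n-1}(-1)^i\,f(x_0)\cdots f(x_i)\,f(a_i)\cdots f(a_{n-1}),
\]
and then use the identity $\pi(f x_i)=f(a_i)$ for the projecting decomposition of $Y$ relative to $B$ — this is the formula produced in the proof of \cref{cof-pushout}, and is in any case forced by uniqueness of projecting decompositions (\cref{pi-closest}) — to recognize the right-hand side as $L^0(Mf(x))$, where this $L^0$ is the one attached to the cofibration $B\cto Y$. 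Extending by linearity, $\Omega_n(f)\circ L^0=L^0\circ Mf$ on $\tOmegaOne_{n-1}(X,A)$; and since $L^0(Mf(p))\in\tOmega_n(Y,B)$ by \cref{L-Omega-to-Omega}, the projection onto $\tOmega_n(Y,B)$ fixes it, whence $\tOmega f(L^0(p))=L^0(Mf(p))=E(Mf(p),0)$. Combining the two summands yields $\tOmega f\circ E=E\circ Mf$.

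The main obstacle is the chain-map computation, and in particular the step where one deletes the $A$-supported terms from $\bd L^0(p)$: the argument only goes through because the correction term $\pi(p)$ appearing in \cref{L-bdry} is supported on paths disjoint from those of $-L^0(\bd p)-p$, so no real cancellation is hidden by the deletion. Granting that, the naturality verification is essentially formal bookkeeping built on \cref{pushout-char,pushout-complement}, \cref{cof-pushout}, and the uniqueness of projecting decompositions.
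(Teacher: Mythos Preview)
Your proof is correct and follows the same approach as the paper: you use \cref{L-bdry} for the chain-map identity and then check naturality by direct computation. Your argument is in fact more detailed than the paper's---the paper disposes of naturality with ``a straightforward computation,'' whereas you spell out the key identity $\pi_B(f x_i)=f(\pi_A x_i)$ coming from the proof of \cref{cof-pushout} and explain why the projection onto $\tOmega_n(Y,B)$ acts trivially on $L^0(Mf(p))$.
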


\begin{proof}
Consider an arbitrary element $(p,q) \in M_n(X,A) = \tOmegaOne_{n-1}(X,A) \oplus \tOmega_{n}(X-A)$. We will show that $\bd E(p,q) = E(\bd(p,q))$.

First, consider $\bd E(p,q) = \bd(L^0(p) + q)$. Applying \cref{L-bdry}, and recalling that terms corresponding to paths contained entirely in $A$ are set to zero when computing boundaries in $\tOmega(X,A)$, this is equal to $-L^0(\bd p) - p + \bd q$. Now consider $E(\bd(p,q))$; by definition, this is $E(-\bd p, \bd q - p) = -L^0(\bd p) + \bd q - p$. Thus we see that the two terms are equal.

To prove the naturality of $E$, we must show that the following square commutes, for any pushout square $f \colon (X,A) \to (Y,B)$.
\[
\begin{tikzcd}
M(X,A) \arrow[r,"E"] \arrow[d] & \tOmega(X,A) \arrow[d] \\
M(Y,B) \arrow[r,"E"] & \tOmega(Y,B)
\end{tikzcd}
\]
This follows by a straightforward computation.
\end{proof}

\subsection*{Proof of excision axiom}

Our next goal will be to prove that $E \colon M \Rightarrow \tOmega$ is a natural isomorphism, from which the proof of \cref{pushout-rel-hom} will follow.
For this, we will require some further lemmas characterizing the elements of $\tOmega_n(X,A)$. As the proofs of these lemmas are very long and technical, we will first discuss a simple example to illustrate some of the essential ideas behind them. 

Suppose that an element $\omega \in \tOmega_3(X,A)$, viewed as a sum of non-degenerate allowed paths, contains a term of the form $c x_0 x_1 a_1 a_2$, where $c$ is a non-zero element of $R$, $x_0, x_1 \in (X-A)_V$ and $a_1, a_2 \in A_V$. For brevity, we let $z$ denote the path $x_0 x_1 a_1 a_2$. 

Consider the boundary of $z$: its $2$-face, in particular, is $c x_0 x_1 a_2$. The assumption that the original path was non-degenerate implies that $a_1 \neq a_2$. Applying \cref{pi-closest}, we see that $\pi x_1 = a_1$ (because there is an edge $x_1 \to a_1$), but that because $a_1 \neq a_2$, there is no edge $x_1 \to a_2$. Thus $x_0 x_1 a_2$ is not allowed. 

By assumption, the boundary of $\omega$ is allowed, so there must be some other terms of $\bd \omega$, arising from the boundaries of other terms of $\omega$, which will cancel this one. Thus the linear combination $\omega$ must contain some sum $\sum_{i=1}^m c_i z_i$ such that we can obtain $x_0 x_1 a_2$ from each path $z_i$ by omitting a vertex. Because all terms of $\omega$ correspond to allowed paths, and there is no edge $x_1 \to a_2$, for each $z_i$ the vertex to be omitted must appear in between $x_1$ and $a_2$, \ie{} we must have $z_i = x_0 x_1 v_i a_2$ for some vertex $v_i$. Thus $x_0 x_1 a_2$ is again the $2$-face of each $z_i$, and hence appears in their boundaries with positive sign; it follows that $\sum_{i = 1}^m c_i = -c$, so that these terms, when added to $c x_0 x_1 a_2$, will give $0$. We may assume that we are working with a non-redundant presentation of $\omega$, so that the $z_i$ and $z$ are all distinct; in particular, this implies $v_i \neq a_1$ for all $i$ as $z$ and $z_i$ can differ only in this vertex. By \cref{pi-closest}, it follows that no $v_i$ is a vertex of $A$, as each one admits an edge from $x_1$, and the only vertex of $A$ admitting an edge from $x_1$ is $a_1$. Thus each $v_i$ is a vertex of $X - A$ for which there exist edges $x_1 \to v_i$ and $v_i \to a_2$. Applying \cref{pi-closest} again, it follows that $\pi v_i = a_2$ for all $i$.
If we then let $x^i_2 = v_i$ for all $i$, we can rearrange the sum $c z + \sum_{i=1}^m c_i z_i$ as follows:

\begin{align*}
c z + \sum_{i=1}^m c_i z_i & = cx_0 x_1 a_1 a_2 + \sum_{i=1}^m c_i z_i \\
& = - \sum_{i=1}^m c_i x_0 x_1 a_1 a_2 + \sum_{i=1}^m c_i x_0 x_1 x_2^i a_2 \\
& = \sum_{i=1}^m(- c_i x_0 x_1 a_1 a_2 + c_i x_0 x_1 x_2^i a_2) \\
& = \sum_{i=1}^m c_i L^1(x_0 x_1 x_2^i) \\
& = c L^1(x_0 x_1 x_2^i) \\
\end{align*}
Thus $z$ appears in the linear combination $\omega$ as part of an `$L$-term,' that is, a term in the image of some $L^j$.

In \cref{iso-surjective-forward}, we will generalize the reasoning of this example to show that the terms of any element of $\tOmega_n(X,A)$ which correspond to paths intersecting $A$ can be grouped into $L$-terms. Then, in \cref{iso-surjective}, we will apply similar reasoning to show that any such element may be expressed as a sum of terms in $\Omega_n(X-A)$ and terms in the image of $L^0$.


\begin{lemma}\label{iso-surjective-forward}
For $n \geq 0$, every element  $\omega \in \tOmega_n(X,A)$ can be written as 
\[
\omega = q + \sum_{k=1}^m c_k L^{j_k}(x^k) 
\]
for some $q \in A_n(X-A)$ and some set of indices $0 \leq k \leq m$ and $0 \leq j_k \leq n - 1$, where $x^k = x^k_0 \cdots x^k_{n-1}$ are distinct non-degenerate allowed paths satisfying $h(x^k_{n-1}) = 1$ and $c_k \in R$ are nonzero. 
\end{lemma}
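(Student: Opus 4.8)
The plan is to argue by induction on the \emph{depth} of $\omega$. Call the number of vertices of a non-degenerate allowed path that lie in $A$ its depth, and let the depth of $\omega$ be the largest depth of a path occurring in the (unique, non-redundant) presentation of $\omega$ as a linear combination of such paths. The key structural remark is that, since there are no edges out of $A$, any allowed path of $X$ meeting $X - A$ has the form $x_0 \cdots x_p a_0 \cdots a_q$ with $x_0, \dots, x_p \in (X - A)_V$ and $a_0, \dots, a_q \in A_V$ --- its $A$-vertices form a (possibly empty) terminal segment --- and, by \cref{pi-closest}, $a_0 = \pi x_p$ whenever $q \ge 0$. The paths of depth $0$ are exactly those lying entirely in $X - A$, which (as $X - A$ is an induced subgraph) are precisely the generators of $A_n(X - A)$; collecting those terms of $\omega$ gives the summand $q \in A_n(X-A)$, and it remains to show that the depth-$\ge 1$ part of $\omega$ is a linear combination of terms $L^{j_k}(x^k)$.

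For the base case, suppose $\omega$ has depth $\le 1$. Each depth-$1$ term has the form $c\, x_0 \cdots x_{n-1} a$ with $x_0, \dots, x_{n-1} \in (X - A)_V$ and $a \in A_V$; by \cref{pi-closest} the edge $x_{n-1} \to a$ forces $h(x_{n-1}) = 1$ and $a = \pi x_{n-1}$, so this term equals $(-1)^{n-1} c\, L^{n-1}(x_0 \cdots x_{n-1})$, with $x_0 \cdots x_{n-1}$ a non-degenerate allowed path in $(X - A)^A$ whose last vertex has height $1$. Since distinct depth-$1$ terms have distinct underlying paths, summing over them proves this case.

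For the inductive step, set $D = \mathrm{depth}(\omega) \ge 2$ and fix a depth-$D$ term $c\, z$, say $z = x_0 \cdots x_p v_0 \cdots v_{D-1}$, so that $p = n - D$ and $v_0 = \pi x_p$. Deleting $v_0$ produces the sequence $x_0 \cdots x_p v_1 \cdots v_{D-1}$, which is \emph{not} allowed: $v_1 \ne v_0$ by non-degeneracy, and by \cref{pi-closest} the vertex $v_0$ is the only one of $A$ receiving an edge from $x_p$, so there is no edge $x_p \to v_1$. As $\bd \omega$ is allowed, the coefficient of this sequence in $\bd \omega$ must vanish; determining which faces of which paths appearing in $\omega$ can equal it shows that $\omega$ contains further terms $x_0 \cdots x_p w\, v_1 \cdots v_{D-1}$, of depth $D - 1$, with $w \in (X - A)_V$ and $\pi w = v_1$, whose coefficients sum to $-c$. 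Iterating --- each step pushing the first $A$-vertex one place to the right past a newly forced vertex of $X - A$, whose projection is pinned down by \cref{pi-closest} and \cref{pi-edges} --- attaches to the term $c\, z$ a finite branching family of terms of $\omega$ of depths $D, D-1, \dots, 1$, all sharing the $(X-A)$-prefix $x_0 \cdots x_p$. One then checks (the case $n = 3$, $D = 2$ being exactly the computation in the discussion before the lemma) that this family reassembles into a combination of terms $L^p(x^k)$, where $x^k = x_0 \cdots x_p w^k_1 \cdots w^k_{D-1}$ are non-degenerate allowed paths in $(X-A)^A$ with $h(x^k_{n-1}) = 1$. Each such $L^p(x^k)$ lies in $\tA_n(X,A)$ and has allowed boundary --- by the arguments underlying \cref{L-allowed} and \cref{L-bdry}, since $h(x^k_i) = 1$ for $i \ge p$ --- so $\omega' = \omega - \sum_k c_k L^p(x^k)$ is again in $\tOmega_n(X,A)$, has the same depth-$0$ part, has its depth-$D$ terms cancelled, and therefore has depth at most $D - 1$. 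Applying the inductive hypothesis to $\omega'$ finishes the proof.

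The step I expect to be the main obstacle is exactly this inductive step: controlling signs and coefficients through the branching descent, verifying that the descent families attached to distinct depth-$D$ terms are disjoint (so the $x^k$ come out distinct and the reassembly is unambiguous), and confirming that subtracting the assembled $L^p$-terms returns an element of $\tOmega_n(X,A)$. For the last point it helps to note that $\tOmega_n(X,A)$ coincides with the class of $\omega \in \tA_n(X,A)$ whose boundary in $C_{n-1}(X)$ lands in $A_{n-1}(X)$, which makes closure under the relevant subtractions transparent.
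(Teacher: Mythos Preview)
Your overall strategy --- isolate the paths that meet $A$, and show they assemble into $L$-terms by chasing cancellations of non-allowed faces --- is exactly the paper's, and your ``descent families'' are precisely what the paper's inner induction on $t$ produces. The difference is the outer induction, and this is where the gap lies.

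You induct on the depth of $\omega\in\tOmega_n(X,A)$, so at each step you must show that
\[
\omega' \;=\; \omega - \sum_k c_k L^{p}(x^k)
\]
again lies in $\tOmega_n(X,A)$ in order to invoke the inductive hypothesis. You assert that each $L^{p}(x^k)$ has allowed boundary ``by the arguments underlying \cref{L-allowed} and \cref{L-bdry}, since $h(x^k_i)=1$ for $i\ge p$.'' This is not true in general. Take $n=3$, $D=2$, $p=1$: with $x^k = x_0 x_1 w^k$ one has
\[
L^1(x^k) \;=\; -\,x_0 x_1\,(\pi x_1)(\pi w^k) + x_0 x_1 w^k(\pi w^k),
\]
and $\bd L^1(x^k)$ contains the term $x_0\,(\pi x_1)(\pi w^k)$, which is not allowed whenever $x_0$ has no edge to $\pi x_1$ (e.g.\ if $h(x_0)=1$ but $\pi x_0\neq\pi x_1$). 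Summing over $k$ does not help: that particular non-allowed path appears with total coefficient $\sum_k c_k=-c\neq 0$. Thus $\sum_k c_k L^{p}(x^k)\notin\Omega_n(X)$ and $\omega'\notin\tOmega_n(X,A)$, so the depth induction cannot proceed. The lemmas you cite are about $L^0$ applied to elements of $\tOmegaOne_{n-1}$, and neither hypothesis is available for the individual $L^p(x^k)$ you subtract.

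The paper avoids this by never asking an intermediate object to lie in $\tOmega_n(X,A)$. It fixes the single element $\omega$ throughout and inducts instead on the number $s$ of terms in a presentation of $\omega$ that have not yet been absorbed into $L$-terms; the allowedness of $\bd\omega$ is used only to locate the next companion term, never to constrain a subtracted piece. Your descent argument is essentially the paper's inner induction, so the fix is to replace the outer induction on depth by an induction on the number of ungrouped terms in a presentation of $\omega$ --- this also dissolves your second worry about descent families overlapping, since overlaps are handled by the re-indexing moves rather than by any disjointness claim.
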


\begin{proof}
We first note that any element $\omega \in \tOmega_n(X,A)$ may be expressed as
\begin{equation}\label{eq:intermediate} \omega=
q + \sum_{k=1}^m c_k L^{j_k}(x^k) + \sum_{r = 1}^{s} d_r y^r
\end{equation}
where:
\begin{itemize}
    \item $q + \sum_{k=1}^m c_k L^{j_k}(x^k)$ satisfies the conditions given in the statement;
    \item $s \geq 0$, each $d_r$ is a nonzero element of $R$ and each $y^r$ is a distinct allowed path in $X$ intersecting both $X - A$ and $A$.
\end{itemize}
To obtain an expression as in \cref{eq:intermediate} for an arbitrary element $\omega$, we group its terms which do not intersect $A$ together as $q$, and take the sum of the remaining terms to be $\sum_{r = 1}^{s} d_r y^r$, setting $m = 0$. Furthermore,  we may assume without loss of generality that our chosen presentation of $\omega$ is non-redundant, \ie{} that the terms $y^r$ and those of $q$ all represent distinct non-degenerate paths.

The form of \cref{eq:intermediate} essentially represents an intermediate state between an arbitrary expression for $\omega$ and an expression of the form given in the statement of the lemma. The sum $\sum_{k=1}^m c_k L^{j_k}(x^k)$ consists of those terms of $\omega$ which have been grouped together into $L$-terms as required by the statement, while $\sum_{r = 1}^{s} d_r y^r$ consists of those terms intersecting $A$ which remain ungrouped. Though we will proceed by a multi-stage induction over several variables, the core of our approach will be to group the terms $d_r y^r$ together into $L$-terms. 

Given an element $\omega \in \tOmega_{n-1}(X,A)$ expressed as in \cref{eq:intermediate}, we will show by induction on $s$ that $\omega$ may be expressed in the form given in the statement. The base case $s = 0$ is trivial, as this is precisely the case in which the two forms coincide.

Now let $s \geq 1$ and suppose the result is proven for all $0 \leq s' < s$. Choose an arbitrary term $d_r y^r$; for ease of notation we will rename $d_r$ to $e$ and $y^r$ to $z$. Note that because $e z$ is not a term of $q$, some vertex of $z$ must be contained in $A_V$. Because there are no arrows out of $A$, there exists $0 \leq j \leq n-1$ such that $z_i \in (X-A)_V$ for $0 \leq i \leq j$, while $z_i \in A_V$ for $j + 1 \leq i \leq n$. Write $z = z_0 \cdots z_j b_j \cdots b_{n-1}$ for $z_i \in (X-A)_V$ and $b_i \in A_V$. This path is allowed, so by \cref{pi-closest} we conclude that $h(z_j) = 1$ and $\pi z_j = b_j$.

We now show that for any $t$ with $j \leq t \leq n-1$, $\omega$ may be expressed as
\[
q + \sum_{k=1}^m c_k L^{j_k}(x^k) + \sum_{r' = 1}^{s'} d_{r'} y^{r'} + \sum_{l = 1}^{u} \sum_{i = j}^{t} (-1)^{j-i} e^l z^l_0 \cdots z^l_i b^l_i \cdots b^l_{n-1}
\]
for some $s' < s$ and some family of paths $z^l_0 \cdots z^l_t$ and coefficients $e^l \in R$ indexed by $1 \leq l \leq u$ for some $u \geq 1$, such that $h(z^l_i) = 1$ for all $i$ between $j$ and $t$ inclusive, where $b^l_i$ denotes $\pi z^l_i$ for each $(l,i)$. We assume that the terms of the double summation are distinct from those of the other summands in this presentation of $\omega$, but not necessarily from each other, i.e., we may have $z^l = z^{l'}$ for some $l, l'$. In the case $t = n-1$ the double summation will simply become a sum of $L$-terms, allowing us to apply the induction hypothesis on $s$ to conclude the overall proof.

We proceed by induction on $t$. In the base case $t = j$, we set $u = 1$, so that the given double sum is simply a single term $ e^1 z^1_0 \cdots z^l_j b^l_j \cdots b^l_{n-1}$. To express $\omega$ in this form we may separate out the chosen term $e z$, designate it as $e^1 z^1$, set $s' = s-1$, and re-index the remaining terms.

Now suppose the statement holds for some $j \leq t \leq n - 2$; we will prove it for $t + 1$. For each $l$, consider the final term of the corresponding alternating sum, $(-1)^{j-t}e^l z^l_0 \cdots z^l_t b^l_t \cdots b^l_{n-1}$. The boundary of this term contains a term of the form $(-1)^{j+1} e^l z^l_0 \cdots z^l_t b^l_{t+1} \cdots b^l_{n-1}$, obtained by omitting the $(t+1)^{\mathrm{st}}$ vertex of the path. Our assumption that the path $z^l_0 \cdots z^l_t b^l_{t+1} \cdots b^l_{n-1}$ is non-degenerate implies that $b^l_t \neq b^l_{t+1}$, and hence that $\pi z^l_t \neq b^l_{t+1}$. By \cref{pi-closest}, it follows that there is no edge from $z^l_t$ to $b^l_{t+1}$; thus this path is not allowed. 

Because the boundary of $\omega$ is allowed, this term must therefore be cancelled by some set of other terms of $\bd \omega$. In other words, the linear combination $\bd \omega$ must contain some sum $\sum_{w} r_w z^l_0 \cdots z^l_t b^l_{t+1} \cdots b^l_{n-1}$, where $\sum_w r_w = (-1)^j e^l$, with each term $r_w z^l_0 \cdots z^l_t b^l_{t+1} \cdots b^l_{n-1}$ arising as a face of a term of either $q$, $\sum_{k=1}^m c_k L^{j_k}(x^k)$, $\sum_{r' = 1}^{s'} d_{r'} y^{r'}$, or $\sum_{l=1}^u \sum_{i = j}^{t} (-1)^{j-i} e^l z^l_0 \cdots z^l_i b^l_i \cdots b^l_{n-1}$. Because the paths $z^l$ are not assumed to be distinct, if there is more than one such term we may re-express the sum indexed by $l$ as follows:
\[
\sum_{i=j}^t (-1)^{j-i}e^l z^l_0 \cdots z^l_i b^l_i \cdots b^l_{n-1} 
= -\sum_w \sum_{i=j}^t (-1)^{j-i}r_w z^l_0 \cdots z^l_i b^l_i \cdots b^l_{n-1}
\]
Thus we obtain a new expression for $\omega$ in the given form for a larger value of $u$. We may therefore assume without loss of generality that $(-1)^{j+1} e^l z^l_0 \cdots z^l_t b^l_{t+1} \cdots b^l_{n-1}$ is canceled by a single term $(-1)^{j} e^l z^l_0 \cdots z^l_t b^l_{t+1} \cdots b^l_{n-1}$ arising in one of the four ways described above.

We may note that, because there is no edge $z^l_t \to b^l_{t+1}$, the only way in which $(-1)^{j} e^l z^l_0 \cdots z^l_t b^l_{t+1} \cdots b^l_{n-1}$ can arise by omitting a vertex of an allowed path is if the vertex to be omitted appears between $z^l_t$ and $b^l_{t+1}$. Thus our given presentation of $\omega$ contains a term $(-1)^{j-t-1} e^l z^l_0 \cdots z^l_t v b^l_{t+1} \cdots b^l_{n-1}$ from which this boundary term arises. We first note that this term cannot be part of $q$, as the vertices $b^l_i$ are all contained in $A$.




Next we consider the case in which $(-1)^{j-t-1} e^l z^l_0 \cdots z^l_t v b^l_{t+1} \cdots b^l_{n-1}$ is a term of the double summation $\sum_{l = 1}^{u} \sum_{i = j}^{t} (-1)^{j-i} e^l z^l_0 \cdots z^l_i b^l_i \cdots b^l_{n-1}$, corresponding to some index $l'$ (necessarily distinct from $l$ as $e^l$ appears here with opposite sign). Then we must have $v = \pi z^l_t = b^l_t$ by construction of the double sum; it follows that $z^l = z^{l'}, e^l = -e^{l'}$, so that the two terms cancel; by removing these terms and re-indexing for a smaller value of $u$, we may assume without loss of generality that this case does not occur.

It remains to consider the cases in which $(-1)^{j-t-1} e^l z^l_0 \cdots z^l_t v b^l_{t+1} \cdots b^l_{n-1}$ is a term of  $\sum_{k=1}^m c_k L^{j_k}(x^k)$ or $\sum_{r' = 1}^{s'} d_{r'} y^{r'}$. As the terms of both of these sums are assumed to correspond to paths distinct from those of the double summation, we cannot have $v = b^l_t$ in these cases. As there is an edge $z^l_t \to v$, we must therefore have $v$ equal to some vertex $z^l_{t+1}$ of $X - A$ admitting an edge from $z^l_t$ and an edge to $b^l_{t+1}$. In particular, this implies $h(z^l_{t+1}) = 1$ and $\pi z^l_{t+1} = b^l_{t+1}$

Of these two, we first consider the case in which $(-1)^{j-t-1} e^l z^l_0 \cdots z^l_{t+1} b^l_{t+1} \cdots b^l_{n-1}$ is a term of $\sum_{r' = 1}^{s'} d_{r'} y^{r'}$. In this case we may simply group the term $(-1)^{j-t-1} e^l z^l_0 \cdots z^l_{t+1} b^l_{t+1} \cdots b^l_{n-1}$ together with the summation $\sum_{i = j}^{t} (-1)^{j-i} e^l y_0 \cdots y_i b_i \cdots b_{n-1}$ to form: 
\begin{align*}
& (-1)^{j-t-1} e^l z^l_0 \cdots z^l_{t+1} b^l_{t+1} \cdots b^l_{n-1} + \sum_{i = j}^{t} (-1)^{j-i} e^l z^l_0 \cdots z^l_i b^l_i \cdots b^l_{n-1} \\
= & \sum_{i = j}^{t+1} (-1)^{j-i} e^l z^l_0 \cdots z^l_i b^l_i \cdots b^l_{n-1}
\end{align*}
Thus we have extended the sum with index $l$ by adding a suitable $(t+1)$-term.

Finally, we consider the case in which $(-1)^{j-t-1} e^l z^l_0 \cdots z^l_{t+1} b^l_{t+1} \cdots b^l_{n-1}$ is a term of 
\[c_k L^{j_k}(x^k) = \sum\limits_{i = j_k}^{n-1}(-1)^{i} c_k  x_0 \cdots x_i a_i \cdots a_{n-1}\]
for some $k$; it is then necessarily the $(t+1)$-term of this sum, because $z^l_{t+1} \in (X - A)_V$ while $b^l_{t+1} \in A_V$. 
Note that for $i \geq j_k + 1$, the $i$-term of such a sum has as its $i^{\mathrm{th}}$ face $c_k x_0 \cdots x_{i-1} a_i \cdots a_{n-1}$, while the $(i-1)$-term has as its $i^{\mathrm{th}}$ face $-c_k x_0 \cdots x_{i-1} a_i \cdots a_{n-1}$. Thus these two faces will cancel each other. As we are seeking the term of $\omega$ whose $(t+1)^{\mathrm{st}}$ face will cancel that of the $(-1)^{j-t} e^l z^l_0 \cdots z^l_t b^l_t \cdots b^l_{n-1}$ term which we identified earlier, we may therefore assume without loss of generality that $(-1)^{j-t-1} z^l_0 \cdots z^l_{t+1} b^l_{t+1} \cdots b^l_{n-1}$ is the $j_k$-term of the sum, \ie{} that $j_k = t + 1$ and $c_k = (-1)^j e^l$. Thus we may perform the following rearrangement to remove this term from $c_k L^{t+1}(x^k)$ and group it with $\sum_{i = j}^{t} (-1)^{j-i} e^l z^l_0 \cdots z^l_i b^l_i \cdots b^l_{n-1}$:

\begin{align*}
& (-1)^j e^l L^{t+1}(x^k) + \sum_{i = j}^{t} e^l (-1)^{j-i} z^l_0 \cdots z^l_i b^l_i \cdots b^l_{n-1} \\
= & \sum\limits_{i = t+1}^{n-1} (-1)^{i+j} e^l x_0 \cdots x_i a_i \cdots a_{n-1} + \sum_{i = j}^{t} (-1)^{j-i} e^l z^l_0 \cdots z^l_i b^l_i \cdots b^l_{n-1} \\
= & \sum\limits_{i = t + 2}^{n-1} (-1)^{i+j} e^l x_0 \cdots x_i a_i \cdots a_{n-1} + (-1)^{t+1+j} e^l z^l_0 \cdots z^l_{t+1} b^l_{t+1} \cdots b^l_{n-1} + \sum_{i = j}^{t} (-1)^{j+i} e^l z^l_0 \cdots z^l_i b^l_i \cdots b^l_{n-1} \\
= & \sum\limits_{i = t + 2}^{n-1} (-1)^{i+j} e^l x_0 \cdots x_i a_i \cdots a_{n-1} + \sum_{i = j}^{t+1} (-1)^{j+i} e^l z^l_0 \cdots z^l_i b^l_i \cdots b_{n-1} \\
= & (-1)^{j} e^l L^{t+2}(x^k) + \sum_{i = j}^{t+1} (-1)^{j+i} e^l z^l_0 \cdots z^l_i b^l_i \cdots b^l_{n-1}.
\end{align*}
Therefore, in this case as well, we have extended the summation with index $l$ by adding a suitable $(t+1)$-term.

Thus $\omega$ may be expressed as a sum of the given form for $i$ ranging from $j$ to $t$ for any $j \leq t \leq n-1$. Considering this result in the case $t = n-1$, we note that $\sum_{l=1}^u \sum_{i = j}^{n-1} (-1)^{j+i} z^l_0 \cdots z^l_i b^l_i \cdots b^l_{n-1} = \sum_{l=1}^u L^j(z^l_0 \cdots z^l_{n-1})$. Moreover, if $z^l = z^{l'}$ for any distinct $l, l'$, we may at this point sum the corresponding $L$-terms to obtain a single term with coefficient $e^l + e^{l'}$; thus we may now assume that the paths $z^l$ are all distinct from each other, as well as from the $x^k$. Thus we may group this term together with the sum $\sum_{k = 1}^m c_k L^{j_k}(x^k)$. After suitable re-indexing, we thus obtain an expression for $\omega$ of the form
\[
q + \sum_{k=1}^m c_k L^{j_k}(x^k) + \sum_{r' = 1}^{s'} d_{r'} y^{r'}
\]
where $s' < s$. Applying induction on $s$, we see that 
\[
\omega = q + \sum_{k=1}^m c_k L^{j_k}(x^k)
\]
for a suitable choice of indices.
Thus any $\omega \in \tOmega_n(X,A)$ may be expressed in the form given in the statement.
\end{proof}

\begin{lemma}\label{iso-surjective}
For $n \geq 0$, every element $\omega \in \tOmega_n(X,A)$ may be written as
\[
\omega = q + L^0(p)
\]
for some $p \in \tAOne_{n-1}(X,A), q \in A_n(X-A)$.
\end{lemma}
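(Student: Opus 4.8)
The plan is to bootstrap from \cref{iso-surjective-forward}, which expresses $\omega = q + \sum_{k=1}^m c_k L^{j_k}(x^k)$ with $q \in A_n(X-A)$ and each $x^k = x^k_0 \cdots x^k_{n-1}$ a distinct non-degenerate allowed path in $X - A$ with $h(x^k_{n-1}) = 1$. If this expression could be arranged so that every $j_k = 0$, then, since $L^0$ is linear, $\omega = q + L^0(p)$ with $p = \sum_k c_k x^k$; moreover $p \in \tAOne_{n-1}(X,A)$, either directly (each $x^k_{n-1}$ lies in $\W$) or, a posteriori, by \cref{L-allowed}, since $A_n(X-A) \subseteq \tA_n(X,A)$ forces $L^0(p) = \omega - q \in \tA_n(X,A)$. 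So it suffices to show that the expression from \cref{iso-surjective-forward} can be rewritten with all $j_k = 0$.

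I would argue this by induction on a suitable complexity measure of the expression $q + \sum_k c_k L^{j_k}(x^k)$ --- for instance $\sum_k j_k$, although a finer invariant may be required --- the base case being the one in which all $j_k = 0$. For the inductive step, fix $k$ with $j := j_k \geq 1$ and set $a^k_i := \pi x^k_i$; recall from the construction in the proof of \cref{iso-surjective-forward} that $h(x^k_i) = 1$ for all $i \geq j$. If $a^k_l = a^k_{l+1}$ for some $j - 1 \leq l \leq n - 2$ --- which holds in particular whenever $h(x^k_{j-1}) \geq 2$, by \cref{pi-edges} applied to the edge $x^k_{j-1} \to x^k_j$ --- then \cref{L-truncate-pi} gives $L^j(x^k) = L^0(x^k)$ (using that $L^{j-1}(x^k)$ and $L^j(x^k)$ differ only by a degenerate summand when $l = j - 1$), so we replace $c_k L^j(x^k)$ by $c_k L^0(x^k)$ and apply the induction hypothesis.

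Otherwise $h(x^k_{j-1}) = 1$ and $a^k_{j-1} \neq a^k_j$, and here I would mimic the boundary-cancellation argument illustrated in the worked example preceding \cref{iso-surjective-forward}. The leading term of $c_k L^j(x^k)$ is $(-1)^j c_k\, x^k_0 \cdots x^k_j a^k_j \cdots a^k_{n-1}$, and deleting the vertex $x^k_j$ contributes a copy of the path $x^k_0 \cdots x^k_{j-1} a^k_j \cdots a^k_{n-1}$ to $\bd\omega$. By \cref{pi-closest}, since $h(x^k_{j-1}) = 1$ the unique $A$-vertex receiving an edge from $x^k_{j-1}$ is $a^k_{j-1} \neq a^k_j$, so this path is non-degenerate but not allowed. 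As $\omega \in \Omega_n(X)$ has allowed boundary, other terms of $\omega$ must contribute cancelling copies of this path to $\bd\omega$; analyzing which terms can do so --- exactly as in \cref{iso-surjective-forward} --- forces them to be paths routed through intermediate vertices $v$ satisfying $x^k_{j-1} \to v$, $v \to a^k_j$, and, crucially, $\pi v = a^k_j$. Iterating this analysis over the successive boundary faces produces, alongside $x^k$, a family of modified paths agreeing with $x^k$ up to index $j - 1$ and sharing its projected sequence $a^k_0 \cdots a^k_{n-1}$, and grouping $c_k L^j(x^k)$ with the corresponding terms of $\omega$ reassembles them into $L$-terms of lower index. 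After re-absorbing any leftover ungrouped terms via a further application of \cref{iso-surjective-forward}, one checks that the chosen complexity measure has strictly decreased, and the induction closes.

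The main obstacle is precisely this last case: the cancellation bookkeeping is a close analogue of the (already lengthy and delicate) proof of \cref{iso-surjective-forward}, and the real work lies in choosing the complexity measure so that every rearrangement --- including those that temporarily split existing $L$-terms into pieces of higher index --- provably decreases it.
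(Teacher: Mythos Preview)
Your approach is essentially the paper's: start from \cref{iso-surjective-forward}, induct on a measure of how many $L$-terms have nonzero lower index, and use boundary-cancellation to lower that index. Your case split (whether $a^k_{j-1} = a^k_j$) is also the paper's; the paper handles your first case by simply assuming each $j_k$ is already minimal for $L^{j_k}(x^k)$, which absorbs the \cref{L-truncate-pi} reduction into the setup.

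Where you diverge is in the second case, and this is where you are making life harder than necessary. You anticipate having to ``iterate over successive boundary faces'' and worry about choosing a complexity measure robust to splitting $L$-terms into pieces of \emph{higher} index. The paper avoids all of this with a single algebraic observation. The cancelling contributions to $x^k_0 \cdots x^k_{j-1} a^k_j \cdots a^k_{n-1}$ in $\bd\omega$ cannot come from $q$ (no $A$-vertices) and, after the obvious WLOG reductions, must come from the $j$-term of some other $c_{k'}L^{j}(x^{k'})$ with $j_{k'}=j$. The key point is then that $x^k$ and $x^{k'}$ agree on indices $0,\ldots,j-1$ and share the projection sequence $a_0,\ldots,a_{n-1}$, so the ``missing'' initial sums
\[
\sum_{i=0}^{j-1}(-1)^i x^k_0\cdots x^k_i a_i\cdots a_{n-1}
\quad\text{and}\quad
\sum_{i=0}^{j-1}(-1)^i x^{k'}_0\cdots x^{k'}_i a_i\cdots a_{n-1}
\]
are literally equal as elements of $C_n(X)$. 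Adding one and subtracting the other costs nothing and converts the pair $-r_wL^j(x^k)+r_wL^j(x^{k'})$ directly into $-r_wL^0(x^k)+r_wL^0(x^{k'})$. No iteration over faces, no temporary increase in indices, and no second pass through \cref{iso-surjective-forward} is needed; the induction variable is simply the number of $k$ with $j_k>0$, and it strictly drops.
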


\begin{proof}
We begin by expressing an arbitrary element $\omega$ in the form given by \cref{iso-surjective-forward}:
\[
\omega = q + \sum_{k=1}^m c_k L^{j_k}(x^k) 
\]
We will assume that each lower limit $j_k$ is minimal, meaning that there is no $j'_k < j_k$ such that $L^{j'_k}(x^k) = L^{j_k}(x^k)$. 
Let $m$ denote the number of lower limits $j_k$ which are nonzero. 
We will prove the statement by induction on $m$. 
By linearity of $L^0$, it suffices to show that $\omega$ can be expressed in the form of \cref{iso-surjective-forward} with all $j_k = 0$. 


The base case $m = 0$ is trivial.
For the inductive step, we let $m \geq 1$ and suppose the statement is proven for all $m' < m$. Choose some arbitrary $k$ for which $j_k \geq 1$, and consider the term $c_k L^{j_k}(x^k)$. For ease of notation, we let $c_k = c$, $x = x^k$ and $j = j_k$.


The $j$-term of the sum $c L^j(x) = \sum_{i = j}^{n-1} (-1)^i c x_0 \cdots x_i a_i \cdots a_{n-1}$ is $(-1)^{j} c x_0 \cdots x_j a_j \cdots a_{n-1}$. The $j$-face of this term is $c x_0 \cdots x_{j-1} a_{j} \cdots a_{n-1}$. Recall that by assumption, $\pi x_{j-1} \neq a_j$; \cref{pi-closest} thus implies that this path is not allowed. Similarly to the proof of \cref{iso-surjective-forward}, the fact that the boundary of $\omega$ is allowed implies that some set of other terms of $\bd \omega$ must combine to cancel $c x_0 \cdots x_{j-1} a_{j} \cdots a_{n-1}$. Once again, the fact that there is no edge $x_{j-1} \to a_j$ and all terms of $\omega$ are allowed implies that each of these terms is of the form $r_w x_0 \cdots x_{j-1} v_w a_{j} \cdots a_{n-1}$ for some vertex $v_w$ of $X$, and $\sum_w r_w = -c$. Likewise, we may break up the term $c L^j(x)$ as a sum $-\sum_w r_w L^j(x)$ and consider each of these $L$-terms individually. 
We will also assume, without loss of generality, that no subset of the $r_w$ adds to 0 -- otherwise we may simply consider the complement of this subset to obtain a smaller set of cancelling terms.

First we consider the case in which $v_w$ is a vertex of $A$; then \cref{pi-closest} implies that $\pi x_{j-1} = v_w$, so let $a_{j-1} = v_w$. Then $r_w x_0 \cdots x_{j-1} a_{j-1} a_{j} \cdots a_{n-1}$ is necessarily the $(j-1)$-term of $c_{k'}L^{j_{k'}}(x^{k'})$. We note that the $j$-term of $L^{j_{k'}}(x^{k'})$ is $(-1)^{j}x_0 \cdots x_{j-1} x'_{j} a_{j} \cdots a_{n-1}$, for some vertex $x'_j$ of $X-A$. Thus the $j$-face of this term is $x_0 \cdots x_{j-1} a_{j} \cdots a_{n-1}$, which cancels with the $j$-face of $(-1)^{j+1}x_0 \cdots x_{j-1} a_{j-1} a_{j} \cdots a_{n-1}$. As $r_w x_0 \cdots x_{j-1} v a_{j} \cdots a_{n-1}$ is meant to be the term whose $j$-face cancels that of $-r_w x_0 \cdots x_{j-1} x_j a_{j} \cdots a_{n-1}$ which we initially identified, we may therefore assume without loss of generality that $v_w$ is not a vertex of $A$.

Let us therefore assume that $v_w$ is a vertex of $X-A$, and let $x'_j = v_w$; then \cref{pi-closest} implies that $\pi x'_j = a_j$. The term $r_w x_0 \cdots x_{j-1} x'_j a_{j} \cdots a_{n-1}$ is necessarily the $j$-term of $c_{k'}L^{j_{k'}}(x^{k'})$, implying $r_w = (-1)^j c_{k'}$. 

If $j_{k'} < j$, then the $(j-1)$-term of $-L^{j_{k'}}(x^{k'})$ is $(-1)^{j}x_0 \cdots x_{j-1} a_{j-1} a_{j} \cdots a_{n-1}$. The $j$-face of this term is $x_0 \cdots x_{j-1} a_{j-1} \cdots a_{n-1}$, which cancels the $j$-face of $(-1)^{j+1}x_0 \cdots x_{j-1} x'_j a_{j} \cdots a_{n-1}$. Since the term of $\omega$ whose $j$-face cancels that of $-r_w x_0 \cdots x_{j-1} x_j a_{j} \cdots a_{n-1}$ is the term $r_w x_0 \cdots x_{j-1} x'_j a_{j} \cdots a_{n-1}$, we may assume without loss of generality that this case does not occur, \ie{} that $j_{k'} = j$.

For ease of notation, denote $x^{k'}$ by $x'$; then we have shown that $\omega$ contains terms of the form $-r_w L^j(x)$ and $r_w L^j(x')$. Moreover, by comparing the $j$-terms of the sums $L^j(x)$ and $L^j(x')$ we see that for $0 \leq i \leq j-1$ we have $x_i = x'_i$, and for $j \leq i \leq n - 1$ we have $\pi x_i = \pi x'_i = a_i$, and hence
\[r_w \sum\limits_{i=0}^{j-1} (-1)^i x_0 \cdots x_i a_i \cdots a_{n-1}
= r_w \sum\limits_{i=0}^{j-1} (-1)^i x'_0 \cdots x'_i a_i \cdots a_{n-1}\]
We may therefore add their difference to $\omega$ without changing its value. Regrouping our terms, we obtain:
\begin{align*}
& (\sum\limits_{i=0}^{j-1} (-1)^i r_w x'_0 \cdots x'_i a_i \cdots a_{n-1} + L^j(x')) - (\sum\limits_{i=0}^{j-1} (-1)^i r_w x_0 \cdots x_i a_i \cdots a_{n-1} + L^j(x)) \\
= & r_w L^0(x') - r_w L^0(x) \\
\end{align*}

Repeating this procedure for all $w$, we obtain an expression for $\omega$ as a sum of $q$ with a set of $L$-terms, in which fewer than $m$ of the lower limits of the $L$-terms are nonzero. By induction, it follows that $\omega$ can be expressed in such a form with all lower limits equal to zero.
\end{proof}
We are finally equipped to prove the following:
\begin{proposition}\label{cone-omega-iso}
The map $E \colon M(X,A) \to \tOmega(X,A)$ is an isomorphism of chain complexes.
\end{proposition}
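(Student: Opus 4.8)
Since \cref{iso-pres-bdry} already shows that $E$ is a morphism of chain complexes, the plan is to prove that each component $E \colon M_n(X,A) \to \tOmega_n(X,A)$ is bijective; a degreewise bijective chain map is automatically a chain isomorphism. The case $n = 0$ is trivial, as $E$ reduces there to the identity on $\Omega_0(X-A) = \tOmega_0(X,A)$, so I would assume $n \geq 1$ throughout. Injectivity is formal; surjectivity is the substantive step, and it is essentially a repackaging of \cref{iso-surjective} — whose proof, together with that of \cref{iso-surjective-forward}, contains the genuine combinatorial difficulty. I expect the only real care needed below is the bookkeeping that upgrades the decomposition coming out of \cref{iso-surjective} to one valid in $M_n(X,A)$.

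For injectivity, the idea is to separate basis paths according to whether they meet $A$. Suppose $(p,q) \in M_n(X,A) = \tOmegaOne_{n-1}(X,A) \oplus \Omega_n(X-A)$ satisfies $L^0(p) + q = 0$. Every non-degenerate path occurring in $q$ lies entirely in $X - A$, whereas by \cref{L-def} every non-degenerate path occurring in $L^0(p)$ has a vertex in $A$ (its terms have the shape $x_0 \cdots x_i a_i \cdots a_{n-1}$). Since these two sets of basis paths are disjoint, there can be no cancellation between $L^0(p)$ and $q$, so $q = 0$ and $L^0(p) = 0$; then \cref{L-injective} forces $p = 0$.

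For surjectivity, given $\omega \in \tOmega_n(X,A)$ I would first invoke \cref{iso-surjective} to write $\omega = q + L^0(p)$ with $p \in \tAOne_{n-1}(X,A)$ and $q \in A_n(X-A)$, and then show that in fact $q \in \Omega_n(X-A)$ and $p \in \tOmegaOne_{n-1}(X,A)$, so that $(p,q) \in M_n(X,A)$ and $E(p,q) = \omega$. The key input is that $\omega \in \Omega_n(X)$, so that its boundary in $C_{n-1}(X)$ is an allowed chain; combining this with \cref{L-bdry} gives
\[
\bd \omega = (\bd q - p) - L^0(\bd p) + \pi(p).
\]
Now partition the non-degenerate paths of $C_{n-1}(X)$ into those contained in $X - A$, those contained in $A$, and those meeting both (using that there are no edges out of $A$); the three summands above lie in the first, third, and second classes respectively, so they cannot cancel one another, and each is therefore separately an allowed chain. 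The first summand says $\bd q - p$ is allowed in the induced subgraph $X - A$, and since $p$ is itself allowed there, so is $\bd q$, giving $q \in \Omega_n(X-A)$. The second summand says $L^0(\bd p)$ is allowed, and since every term of $L^0(\bd p)$ meets $X - A$ it lies in $\tA_{n-1}(X,A)$, whence \cref{L-allowed} yields $\bd p \in \tAOne_{n-2}(X,A)$ and hence $p \in \tOmegaOne_{n-1}(X,A)$. This produces the required $(p,q) \in M_n(X,A)$ with $E(p,q) = \omega$, completing surjectivity and hence the proof.
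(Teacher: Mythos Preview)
Your proof is correct and follows essentially the same approach as the paper's: injectivity via disjoint supports and \cref{L-injective}, and surjectivity via \cref{iso-surjective} followed by the ``upgrade'' argument using \cref{L-bdry} and \cref{L-allowed}. The only cosmetic difference is that you split $\bd\omega$ directly into three pieces (paths in $X-A$, in $A$, meeting both), whereas the paper rearranges to $\bd\omega + p - \pi(p) = -L^0(\bd p) + \bd q$ and then splits the right-hand side in two; both reach the same conclusion.
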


\begin{proof}
We first show that each map $E \colon M_n(X,A) \to \tOmega_n(X,A)$ is injective. To see this, let $(p,q) \in \tOmegaOne_{n-1}(X,A) \oplus \tOmega(X,A)$ such that $q + L^0(p) = 0$. Rearranging this expression, we obtain $q = -L^0(p)$. 
Since every nonzero term of $L^0(p)$ contains a vertex of $A$ while no nonzero term of $q$ contains a vertex of $A$, it follows that both $q$ and $L^0(p)$ are zero. Thus $p = 0$ by \cref{L-injective}.

We now prove surjectivity of $E$. For some $n \geq 0$, let $\omega \in \tOmega(X,A)$. By \cref{iso-surjective},  \[\omega = L^0(p) + q\]for some $p \in \tAOne_{n-1}(X,A)$ and $ q \in A_n(X-A)$. To prove that $\omega$ is in the image of $E$ we must show that $p \in \tOmegaOne_{n-1}(X,A)$ and $q \in \Omega_n(X-A)$, \ie{} that the boundaries of both $p$ and $q$ are allowed, and that all terms of the boundary of $p$ have their last vertices in $A$.

We now proceed analogously to the proof of \cite[Lem.~3.10]{grigor'yan-jimenez-muranov-yau}. By \cref{L-bdry}
\[
\bd \omega = -L^0(\bd p) - p + \pi(p) + \bd q.
\]
Rearranging this equation, we obtain:
\[
\bd \omega + p - \pi(p) = -L^0(\bd p) + \bd q.
\]
On the left-hand side of the equation, the terms $\bd \omega$ and $p$ are allowed by assumption, while $\pi(p)$ is allowed by \cref{pi-allowed}. Thus we see that $-L^0(\bd p) + \bd q$ is allowed. Now we may note that every nonzero term of $-L^0(\bd p)$ includes at least one vertex of $A$, while this is not the case for any term of $\bd q$. 
Thus there can be no cancellations between that terms of $-L^0(\bd p)$ and $\bd q$, implying that $L^0(p)$ and $\bd q$ must each be allowed. Thus $q \in \Omega_n(X-A)$. Furthermore, by \cref{L-allowed}, the fact that $L^0(\bd p)$ is allowed implies $\bd p \in \tAOne_{n-2}(X,A)$.
\end{proof}

\begin{corollary}\label{Q-T-core}
The functors $Q, \tOmega \colon \Cof \to \Ch_R$ send all morphisms of $\CofPO$ to isomorphisms.
\end{corollary}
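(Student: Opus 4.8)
The plan is to combine the three structural results already assembled: the natural isomorphism $Q \cong \tOmega$ on all of $\Cof$ (\cref{Omega-U}), the natural transformation $E \colon M \Rightarrow \tOmega|_{\CofPO}$ together with the fact that each $E$ is an isomorphism of chain complexes (\cref{iso-pres-bdry,cone-omega-iso}), and the invariance statement that $M$ carries every morphism of $\CofPO$ to an isomorphism (\cref{mapping-cone-invariant}). By \cref{Omega-U} it suffices to prove the claim for $\tOmega$, since naturally isomorphic functors send the same morphisms to isomorphisms.

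First I would observe that, by \cref{iso-pres-bdry}, for any morphism $f \colon (X,A) \to (X',A')$ in $\CofPO$ the square
\[
\begin{tikzcd}
M(X,A) \arrow[r,"E"] \arrow[d,"Mf"'] & \tOmega(X,A) \arrow[d,"\tOmega f"] \\
M(X',A') \arrow[r,"E"] & \tOmega(X',A')
\end{tikzcd}
\]
commutes in $\Ch_R$. By \cref{cone-omega-iso}, the two horizontal maps are isomorphisms, and by \cref{mapping-cone-invariant} the left vertical map $Mf$ is an isomorphism. Hence $\tOmega f = E \circ (Mf) \circ E^{-1}$ is an isomorphism of chain complexes. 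In other words, $E$ is a natural isomorphism $M \cong \tOmega|_{\CofPO}$, and since $M$ inverts all morphisms of $\CofPO$, so does $\tOmega|_{\CofPO}$.

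Finally I would transport this back to $Q$: by \cref{Omega-U} there is a natural isomorphism $Q \cong \tOmega$ as functors $\Cof \to \Ch_R$, so for any $f$ in $\CofPO$ the map $Qf$ is conjugate to $\tOmega f$ by the components of this natural isomorphism, hence is itself an isomorphism. This completes the proof.

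I do not expect a genuine obstacle here: the corollary is a purely formal consequence of the preceding development. All of the real work lies upstream---in particular in proving that $E$ is surjective (\cref{iso-surjective}, via the $L$-term rearrangements of \cref{iso-surjective-forward}) and in checking that $M$ only involves the complements $X-A$ and the height-one subgraphs, which are preserved up to isomorphism under pushout by \cref{pushout-char,pushout-complement}. Once those are in hand, the present statement is immediate.
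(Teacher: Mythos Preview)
Your proposal is correct and follows exactly the approach of the paper, which simply cites \cref{Omega-U,mapping-cone-invariant,cone-omega-iso} and declares the result immediate. You have spelled out the naturality-square argument that makes this citation work, but there is no difference in strategy.
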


\begin{proof}
This is immediate from \cref{Omega-U,mapping-cone-invariant,cone-omega-iso}.
\end{proof}

We can now prove the main result of this section.

\begin{proof}[Proof of \cref{pushout-rel-hom}]
The relative homology maps $H_n(X,A) \to H_n(X',A')$ are induced by the map $\Omega(X) / \Omega(A) \to \Omega(X') / \Omega(A')$, \ie{} the image of $(X,A) \to (X',A')$ under the functor $Q \colon \Cof \to \Ch_R$. This is an isomorphism of chain complexes by \cref{Q-T-core}.
\end{proof}

\section{Main Theorem}\label{sec:main-thm}

In this section, we prove the main theorem of the paper:

\begin{theorem} \label{cofib-cat}
For any ring $R$, the category $\DiGraph$ of directed graphs admits the structure of a cofibration category, with the cofibrations as defined by \cref{cofib-def} and $R$-homology isomorphisms of \cref{def:homology-iso} as the weak equivalences.
\end{theorem}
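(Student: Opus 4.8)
The plan is to verify the seven axioms (C1)--(C7) of \cref{cofib-cat-def}, most of which have already been established in \cref{sec:cofibs,sec:excision}, so that the proof is largely a matter of assembly with one genuinely new construction needed for (C5). For the ``easy'' axioms: (C1) holds because cofibrations contain all identities and are closed under composition by \cref{cof-wide-subcat}, weak equivalences obviously are (isomorphisms of $R$-modules compose and include identities), and the identity on any object is both; (C2) holds because, each $H_n$ being a functor, $gf$ and $hg$ being homology isomorphisms makes $H_n(g)H_n(f)$ and $H_n(h)H_n(g)$ isomorphisms for all $n$, whence $H_n(f),H_n(g),H_n(h),H_n(hgf)$ are isomorphisms by the elementary $2$-out-of-$6$ property of isomorphisms in $\ModR$; (C3) holds since $\varnothing$ is initial and $\varnothing \cto X$ is a cofibration by \cref{empty-cof}; (C6) holds as $\DiGraph$ has small coproducts, computed as disjoint unions. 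Since $\DiGraph$ is cocomplete, the pushouts and transfinite composites required by (C4) and (C7) exist, and their cofibration halves are exactly \cref{cof-pushout,closed-under-transf-comp}.

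It remains to treat the acyclic-cofibration halves of (C4) and (C7) and all of (C5). For (C4): if $A \cto X$ is an acyclic cofibration and $B \cto Y$ is its pushout along some $A \to B$, then $B \cto Y$ is a cofibration by \cref{cof-pushout}, so $(X,A)\to(Y,B)$ is a morphism of $\CofPO$; by \cref{tcof-rel-hom} the modules $H_n(X,A)$ vanish, hence so do $H_n(Y,B)$ by excision (\cref{pushout-rel-hom}), and $B \cto Y$ is a homology isomorphism by \cref{tcof-rel-hom} again. For (C7): a transfinite composite of acyclic cofibrations is a cofibration by \cref{closed-under-transf-comp}, and it is a homology isomorphism because $\Omega_\bullet$ --- built from the functors $\DiGraph(I_k,-)$, which preserve filtered colimits since each $I_k$ is finite, followed by free-module and finite-limit constructions --- commutes with the filtered colimit computing the transfinite composite; one then concludes by transfinite induction, using that a filtered colimit of isomorphisms is an isomorphism.

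The remaining axiom (C5) asks us to factor the codiagonal $X \sqcup X \to X$ as a cofibration followed by a weak equivalence, i.e., to build a cylinder object, and this is where the real work lies: the projecting-decomposition condition excludes the obvious candidates (for instance, $X \sqcup X$ is not even an induced subgraph of $X \gtimes I_1$). The plan is to take $X \gtimes J$, where $J$ is the five-vertex ``zig-zag'' tree on $\{0,1,2,3,4\}$ with edges $1\to 0$, $1\to 2$, $3\to 2$, $3\to 4$. One checks directly that $\{0,4\} \cto J$ is a cofibration (the vertices $0$ and $4$ are sinks, vertex $2$ reaches neither, and $\pi(1)=0,\ \pi(3)=4$ defines a projecting decomposition), so $X \sqcup X \cong X \gtimes \{0,4\} \cto X \gtimes J$ is a cofibration by \cref{box-prod-cofibs}; the map $X \gtimes J \to X$ induced by $J \to I_0$ restricts to the codiagonal on $X \sqcup X$, and it is a homology isomorphism since $J$ is a contractible tree (cf.\ \cref{rmk:tree_homology} and the homotopy theory of \cite{grigor'yan-lin-muranov-yau:homotopy}) and the box product preserves homotopy equivalences, making $X \gtimes J \to X \gtimes I_0 \cong X$ one --- or, alternatively, by writing $J$ as a union of intervals and iterating excision. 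I expect this last point, together with the very choice of a digraph that simultaneously contains $X \sqcup X$ as a cofibration and is homology-equivalent to $X$, to be the main obstacle, as it requires threading between the ``no edges out'' and ``projecting decomposition'' conditions and then verifying the weak equivalence in a self-contained way. Once (C1)--(C7) are in place, the Factorization Lemma and left properness (\cref{left-properness}) follow formally, completing the proof.
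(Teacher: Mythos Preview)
Your proposal is correct and follows essentially the same route as the paper: the verification of (C1)--(C4), (C6), (C7) matches the paper's assembly of \cref{cof-wide-subcat,empty-cof,cof-pushout,closed-under-transf-comp,tcof-rel-hom,pushout-rel-hom,Omega-filtered-colim}, and your cylinder for (C5) is exactly the paper's $J$ (relabeled), with $\partial J \cto J$ a cofibration and $X \gtimes J \to X$ a weak equivalence. The only cosmetic difference is that the paper justifies this last weak equivalence via the K\"unneth theorem (\cref{box-prod-weqs}), whereas you invoke contractibility of $J$ and homotopy invariance of the box product; both arguments are valid.
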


Most of the axioms of \cref{cofib-cat-def} have already been proven in \cref{sec:cofibs,sec:excision}; those which remain to be proven are the factorization of codiagonal maps and the closure of acyclic cofibrations under transfinite composition. We first consider the factorization axiom. As a preliminary, we note the following:

\begin{proposition}\label{box-prod-weqs}
A box product of homology isomorphisms is a homology isomorphism.
\end{proposition}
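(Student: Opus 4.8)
The plan is to reduce the statement to the K\"unneth theorem for path homology of Grigor'yan--Muranov--Yau \cite{grigor'yan-muranov-yau}. Recall that $\gtimes$ is the Cartesian product of digraphs, for which that reference provides a natural isomorphism of chain complexes $\Omega_\bullet(X \gtimes Y) \cong \Omega_\bullet(X) \otimes_R \Omega_\bullet(Y)$ (a digraph-theoretic Eilenberg--Zilber isomorphism). For arbitrary digraphs this extends from the finite case by a filtered colimit argument: every allowed path involves only finitely many vertices, so $\Omega_\bullet(-)$ commutes with the filtered colimit of a digraph along its finite induced subgraphs, as do $\gtimes$ and $\otimes_R$. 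Since $f \gtimes g = (f \gtimes \mathrm{id}) \circ (\mathrm{id} \gtimes g)$ and the K\"unneth isomorphism is natural in each variable, $\Omega_\bullet(f \gtimes g)$ is carried to $\Omega_\bullet(f) \otimes_R \Omega_\bullet(g)$. Thus it suffices to prove that the tensor product of the two quasi-isomorphisms $\Omega_\bullet(f)$, $\Omega_\bullet(g)$ (quasi-isomorphisms being exactly the condition that $f$ and $g$ are homology isomorphisms) is again a quasi-isomorphism.

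The key input here is that $\Omega_n(X;R)$ is a \emph{free} $R$-module for every digraph $X$ and every $n \geq 0$ (cf.~\cite{grigor'yan-lin-muranov-yau:unpublished}; when $R$ is a principal ideal domain this is immediate from \cref{omega_kernel}, which exhibits $\Omega_n(X)$ as the kernel of a map of free $R$-modules, hence as a submodule of a free module, hence free). Consequently $\Omega_\bullet(X)$ --- and likewise $\Omega_\bullet(Y)$, $\Omega_\bullet(X')$, $\Omega_\bullet(Y')$ --- is a non-negatively graded chain complex of projective $R$-modules. A quasi-isomorphism between non-negatively graded chain complexes of projective modules is a chain homotopy equivalence, so $\Omega_\bullet(f)$ and $\Omega_\bullet(g)$ are chain homotopy equivalences. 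The tensor product of two chain homotopy equivalences is a chain homotopy equivalence --- the homotopy inverses and the contracting homotopies simply tensor together, and this requires no flatness hypotheses --- hence in particular a quasi-isomorphism. Therefore $\Omega_\bullet(f \gtimes g) \cong \Omega_\bullet(f) \otimes_R \Omega_\bullet(g)$ is a quasi-isomorphism, \ie $f \gtimes g$ is an $R$-homology isomorphism.

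Once the two external inputs are in place, the argument is purely formal; the main point to be careful about is invoking the K\"unneth theorem in precisely the form needed --- a natural isomorphism of chain complexes of $R$-modules, valid for arbitrary (not necessarily finite) digraphs --- rather than merely a homology-level K\"unneth short exact sequence over a field or a PID, and likewise pinning down the reference for $R$-freeness of $\Omega_n(X;R)$ over a general commutative ring $R$ (equivalently, that $\Omega_\bullet(X;R) \cong \Omega_\bullet(X;\mathbb{Z}) \otimes_{\mathbb{Z}} R$). Should only the homology-level K\"unneth sequence (carrying a $\mathrm{Tor}$ correction term, which presupposes $\Omega_\bullet$ degreewise flat in any case) be available, the alternative is to apply its naturality together with the five lemma to $f$ and $g$ directly.
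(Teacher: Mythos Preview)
Your approach is essentially the same as the paper's: both invoke the K\"unneth theorem of \cite{grigor'yan-muranov-yau}. The paper's proof is a one-line citation of that theorem, while you spell out the chain-level argument (Eilenberg--Zilber plus the fact that quasi-isomorphisms of bounded-below complexes of projectives are chain homotopy equivalences, which survive tensoring). The caveats you raise --- freeness of $\Omega_n(X;R)$ over a general commutative ring $R$, naturality, and extension from finite to arbitrary digraphs via filtered colimits --- are genuine subtleties that the paper simply absorbs into the citation; your filtered-colimit remark in fact anticipates \cref{Omega-filtered-colim}, and your freeness concern is addressed later in the paper only for $R = \mathbb{Z}$ (see the proof of \cref{Omega-exact}), so your hedging there is warranted.
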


\begin{proof}
This is immediate from \cite[Thm.~4.7]{grigor'yan-muranov-yau}.
\end{proof}


\begin{proposition}\label{codiag-factor}
For every $X \in \DiGraph$, the codiagonal map $X \sqcup X \to X$ factors as a cofibration followed by a path homology isomorphism.
\end{proposition}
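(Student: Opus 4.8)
The plan is to obtain the factorization from a box product with a carefully chosen ``interval'' digraph. The naive guess, using $X \gtimes I_1$ as a cylinder, fails immediately: the two copies $X \gtimes \{0\}$ and $X \gtimes \{1\}$ together cover every vertex of $X \gtimes I_1$, so the resulting map $X \sqcup X \to X \gtimes I_1$ is not even an induced subgraph inclusion, let alone a cofibration. A symmetric ``double cone'' fails too, since any vertex placed above both copies of $X$ would be equidistant from the two copies of each vertex, contradicting the uniqueness of closest points recorded in \cref{pi-closest}. Instead I would use the five-vertex digraph $J$ with vertex set $\{p_0, a_0, v, a_1, p_1\}$ and edges $a_0 \to p_0$, $a_0 \to v$, $a_1 \to v$, $a_1 \to p_1$. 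Here $p_0$ and $p_1$ are sinks --- the two ``endpoints'' --- and the middle vertex $v$ is an ``apex'' which is also a sink and reaches neither endpoint; it is precisely this apex that lets $J$ be contractible while preventing any vertex from being able to reach both endpoints.

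First I would check that the induced subgraph inclusion $\{p_0, p_1\} \cto J$ is a cofibration in the sense of \cref{cofib-def}: it is an induced inclusion (there is no edge between $p_0$ and $p_1$), there are no edges out of $\{p_0, p_1\}$ since both vertices are sinks, and the assignment $\pi a_0 = p_0$, $\pi a_1 = p_1$ (together with $\pi p_i = p_i$) is a projecting decomposition --- note that $v$ admits no path to $\{p_0, p_1\}$ and so lies outside the domain of $\pi$, while $a_0$ and $a_1$ each admit a path to only one endpoint, through which the unique minimal path trivially passes. Next I would verify that $J$ has the path homology of a point: its underlying graph is a path, hence a tree, so $\Omega_l(J) = 0$ for $l \geq 2$ by \cref{rmk:tree_homology}; and a short computation shows $J$ is connected with $H_0(J) = R$ and that $\bd_1$ is injective on the rank-four module $\Omega_1(J)$, so that $H_1(J) = 0$. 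Consequently the unique map $J \to I_0$ is a homology isomorphism.

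Finally I would assemble everything from the box product $X \gtimes J$. Under the identifications $X \sqcup X \cong X \gtimes \{p_0, p_1\}$ and $X \cong X \gtimes I_0$, applying \cref{box-prod-cofibs} to the cofibration $\id[X] \colon X \cto X$ (\cref{cof-wide-subcat}) and the cofibration $\{p_0, p_1\} \cto J$ shows that $X \sqcup X \cto X \gtimes J$ is a cofibration, while applying \cref{box-prod-weqs} to $\id[X]$ and to $J \to I_0$ shows that $X \gtimes J \to X$ is a homology isomorphism. A direct inspection shows that this composite restricts to the identity on each copy $X \gtimes \{p_i\}$ of $X$, hence equals the codiagonal map $X \sqcup X \to X$, giving the desired factorization. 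The only real obstacle is the choice of $J$: the obvious candidates do not work, and realizing that one needs a five-vertex interval carrying an extra sink ``apex'' reaching neither endpoint is the crux of the argument; once $J$ is in hand, the box-product lemmas make the rest routine.
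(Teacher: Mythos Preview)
Your proof is correct and essentially identical to the paper's: under the relabeling $p_0 \mapsto -2$, $a_0 \mapsto -1$, $v \mapsto 0$, $a_1 \mapsto 1$, $p_1 \mapsto 2$, your digraph $J$ is exactly the paper's $J$, and your argument matches the paper's step for step (verifying the inclusion $\partial J \cto J$ is a cofibration, invoking \cref{rmk:tree_homology} for contractibility, and assembling the factorization via \cref{box-prod-cofibs} and \cref{box-prod-weqs}). Your motivational remarks about why $I_1$ and the symmetric double cone fail are a nice addition not present in the paper.
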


\begin{proof}
Let $J$ denote the directed graph pictured below:

\begin{figure}[H]
    \centering
    \begin{tikzpicture}[node distance=20pt]
        \node(m2) {$\bullet$};
        \node(m1) [right=of m2] {$\bullet$};
        \node(0) [right=of m1] {$\bullet$};
        \node(1) [right=of 0] {$\bullet$};
        \node(2) [right=of 1] {$\bullet$};

        \node[above=1pt of m2] {$-2$};
        \node[above=1pt of m1] {$-1$};
        \node[above=1pt of 0] {$0$};
        \node[above=1pt of 1] {$1$};
        \node[above=1pt of 2] {$2$};              
        
        \draw[->] (m1) to (m2);
        \draw[->] (m1) to (0);
        \draw[->] (1) to (0);
        \draw[->] (1) to (2);     
    \end{tikzpicture}
\end{figure}

Let $\partial J$ denote the subcomplex of $J$ consisting of the two endpoint vertices $-2$ and $2$. The inclusion $\partial J \cto J$ is a cofibration, with projecting decomposition given by $\pi(-1) = \pi(-2) = -2, \pi(1) = \pi(2) = 2$.  Furthermore, $J$ has trivial path homology by \cref{rmk:tree_homology}.

Now observe that for any directed graph $X$, the codiagonal map $X \sqcup X \to X$ is isomorphic to the box product of the identity on $X$ with the unique map $\partial J \to I_0$. Thus the codiagonal may be factored as:
\[
X \gtimes{\partial J} \cto X \gtimes{J} \to X
\]
The map $X \gtimes{\partial J} \cto X \gtimes{J}$ is the box product of the identity on $X$ with the cofibration $\partial J \cto J$, hence a cofibration by \cref{box-prod-cofibs}. Similarly, the map $X \gtimes{J} \to X$ is the box product of the identity on $X$ with the homology isomorphism $J \to I_0$, hence a homology isomorphism by \cref{box-prod-weqs}.
\end{proof}

Next we consider the stability of acyclic cofibrations under transfinite composition.

\begin{proposition}\label{Omega-filtered-colim}
For any ring $R$, the functor $\Omega \from \DiGraph \to \Ch_R$ preserves filtered colimits.
\end{proposition}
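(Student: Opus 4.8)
The plan is to reduce the statement to the fact that filtered colimits commute with finite limits in the category of $R$-modules. Since filtered colimits in $\Ch_R$ are computed degreewise, it suffices to show that for each $n \geq 0$ the functor $\Omega_n \from \DiGraph \to \ModR$ preserves filtered colimits; the differentials $\bd \from \Omega_n \to \Omega_{n-1}$ are natural, so this immediately upgrades to the corresponding statement about chain complexes. Recall that $\Omega_0 = A_0$ and that, for $n \geq 1$, $\Omega_n(X)$ is the pullback of $A_n(X) \xrightarrow{\bd_n \circ \iota_n} C_{n-1}(X) \xleftarrow{\iota_{n-1}} A_{n-1}(X)$. As pullbacks are finite limits and filtered colimits are exact in $\ModR$ (so in particular commute with finite limits), it is enough to prove that $A_n$ and $C_n$ preserve filtered colimits for all $n$.

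First I would check that filtered colimits in $\DiGraph$ are computed as in the presheaf category $\Set^{\G^\op}$, hence pointwise on vertices and edges. The reflector $\Set^{\G^\op} \to \DiGraph$ collapses parallel edges, but in a filtered colimit of digraphs no parallel edges arise: if two edges become parallel in the colimit, their sources agree at some stage and their targets agree at some (possibly later) stage, so by filteredness they have a common image edge in some $X_i$, where---$X_i$ being a digraph---there is only one edge with those endpoints. Thus the presheaf colimit is already a digraph and the reflector acts as the identity on it. It follows that $\DiGraph(I_n, -) \from \DiGraph \to \Set$ preserves filtered colimits: a map out of the finite digraph $I_n$ amounts to a choice of $n+1$ vertices together with finitely many adjacency conditions, all of which are witnessed at a finite stage of the diagram, and injectivity of the comparison map is seen the same way. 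In other words, each $I_n$---and likewise each discrete digraph on finitely many vertices---is a finitely presentable object of $\DiGraph$.

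With this in hand the remaining steps are formal. The functor $\widetilde{A}_n(X) = R(\DiGraph(I_n, X))$ is the composite of $\DiGraph(I_n, -)$ with the free $R$-module functor $R(-) \from \Set \to \ModR$; the former preserves filtered colimits by the previous step and the latter, being a left adjoint, preserves all colimits, so $\widetilde{A}_n$ preserves filtered colimits. The same argument applies to $D\widetilde{A}_n$, whose underlying set-valued functor is the subfunctor of $\DiGraph(I_n, -)$ consisting of paths with some repeated consecutive vertex (equivalently, the union of the images of the finitely many degeneracies $\sigma_i^* \from \DiGraph(I_{n-1}, -) \to \DiGraph(I_n, -)$), which is again detected at finite stages and so preserves filtered colimits. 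Since $A_n = \operatorname{coker}\!\big(D\widetilde{A}_n \into \widetilde{A}_n\big)$ and cokernels are colimits, $A_n$ preserves filtered colimits. Running the identical argument with discrete digraphs in place of $I_n$ shows that $K_n$ and $DK_n$, and hence $C_n = K_n/DK_n$, preserve filtered colimits.

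Finally, for each $n \geq 1$ we have a natural isomorphism $\Omega_n(X) \cong A_n(X) \times_{C_{n-1}(X)} A_{n-1}(X)$, and each of the three functors on the right preserves filtered colimits, so $\Omega_n$ does too by exactness of filtered colimits in $\ModR$; the case $n = 0$ is $\Omega_0 = A_0$. Passing to chain complexes, the canonical comparison map $\varinjlim_i \Omega(X_i) \to \Omega(\varinjlim_i X_i)$ is a chain map which is an isomorphism in every degree, hence an isomorphism in $\Ch_R$. The only point that is not purely formal is the identification of filtered colimits in $\DiGraph$ with those in $\Set^{\G^\op}$---equivalently, the finite presentability of the digraphs $I_n$---and this is precisely where the ``no parallel edges in a filtered colimit of digraphs'' observation is needed; everything else follows from the interchange of finite limits and filtered colimits in a module category.
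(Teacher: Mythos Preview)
Your proof is correct and follows essentially the same route as the paper: reduce to each $\Omega_n$ degreewise, express $\Omega_n$ as a pullback of $A_n$, $A_{n-1}$, $C_{n-1}$, and use that filtered colimits commute with finite limits in $\ModR$, so it suffices that $A_n$ and $C_n$ preserve filtered colimits, which in turn follows from finiteness of $I_n$ and of the relevant vertex sets. The paper's argument is terser, simply invoking finiteness at the last step; your version spells out more carefully the one nontrivial point the paper leaves implicit, namely that filtered colimits in $\DiGraph$ agree with those in $\Set^{\G^\op}$ (equivalently, that $I_n$ is finitely presentable in $\DiGraph$), which is exactly what is needed to conclude that $\DiGraph(I_n,-)$ preserves filtered colimits.
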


\begin{proof}
As colimits of chain complexes are computed component-wise, it suffices to check $\Omega_n \from \DiGraph \to \ModR$ preserves filtered colimits for each $n \in \mathbb{N}$.
    
    For a graph $X$, the $R$-module $\Omega_n X$ is the pullback:
    \[ \begin{tikzcd}
        \Omega_n X \ar[r] \ar[d] \pullback & A_{n-1}(X) \ar[d, hook] \\
        A_{n}(X) \ar[r] & C_{n-1}(X).
    \end{tikzcd} \]
    As filtered colimits commute with finite limits, it suffices to show the functors
    \begin{align*}
        A_{n-1}(-) \from \DiGraph \to \ModR \\
        A_n(-) \from \DiGraph \to \ModR \\
        C_{n-1}(-) \from \DiGraph \to \ModR \\
    \end{align*}
    preserve filtered colimits.
    This follows since the sets $\{ 0, \dots, n-1 \}, \{ 0, \dots, n \}$ are finite and the graph $I_{n-1}$ is finite.
\end{proof}

\begin{corollary} \label{homology-filtered-colim}
    For any ring $R$, the path homology functor $H_* \from \DiGraph \to \ModRN$ preserves filtered colimits.
\end{corollary}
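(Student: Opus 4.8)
The plan is to deduce this directly from \cref{Omega-filtered-colim} together with the exactness of filtered colimits in module categories. First I would recall that the path homology functor $H_* \from \DiGraph \to \ModRN$ factors as the composite of $\Omega \from \DiGraph \to \Ch_R$ with the homology functor $H_* \from \Ch_R \to \ModRN$. By \cref{Omega-filtered-colim}, the first functor preserves filtered colimits, so it suffices to show the second does as well; the result then follows since a composite of functors preserving filtered colimits again preserves them.

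For the homology functor, it again suffices to treat each $H_n \from \Ch_R \to \ModR$ separately, since colimits of $\mathbb{N}$-graded modules are computed degreewise. For a chain complex $D_\bullet$, the module $H_n(D) = \ker(\bd_n)/\im(\bd_{n+1})$ is built from $D_{n+1}, D_n, D_{n-1}$ by taking a kernel, an image, and a cokernel. The key input is the standard fact that filtered colimits in $\ModR$ are exact, \ie{} the filtered colimit functor preserves finite limits as well as all colimits, and hence commutes with the formation of kernels, images, and cokernels. Applying this to a filtered diagram of chain complexes, one obtains a natural isomorphism $\operatorname{colim} H_n(D^{(i)}) \cong H_n(\operatorname{colim} D^{(i)})$, which is exactly the statement that $H_n \from \Ch_R \to \ModR$ preserves filtered colimits.

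I do not anticipate a serious obstacle here: the argument is a formal consequence of \cref{Omega-filtered-colim} and the exactness of filtered colimits of modules. The only point requiring any care is bookkeeping—namely verifying that the isomorphisms assembling $H_n$ from kernels and cokernels are natural in the filtered diagram and compatible with the grading—but this is routine.
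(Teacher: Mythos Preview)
Your proposal is correct and follows essentially the same approach as the paper: factor $H_*$ as $\Omega$ followed by homology of chain complexes, invoke \cref{Omega-filtered-colim} for the first factor, and use that homology of chain complexes preserves filtered colimits for the second. The paper simply states the last fact without justification, whereas you spell out the standard argument via exactness of filtered colimits in $\ModR$.
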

\begin{proof}
    This is immediate from \cref{Omega-filtered-colim}, together with the fact that the homology functor on chain complexes $H_* \from \Ch_R \to \ModRN$ preserves filtered colimits.
\end{proof}

\begin{proposition} \label{weq-transf-comp}
    A transfinite composite of weak equivalences is a weak equivalence.
\end{proposition}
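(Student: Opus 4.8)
The plan is to deduce this from \cref{homology-filtered-colim}, which tells us that $H_* \colon \DiGraph \to \ModRN$ preserves filtered colimits, together with the trivial observation that weak equivalences are closed under composition, since a composite of isomorphisms of $R$-modules is again an isomorphism. Following the conventions in the proof of \cref{closed-under-transf-comp}, I would fix a limit ordinal $\alpha$ and a diagram $X \colon \alpha \to \DiGraph$ that is continuous at each limit ordinal $\beta < \alpha$ (\ie{} $X_\beta \cong \varinjlim_{\gamma < \beta} X_\gamma$) and in which every map $X_\beta \to X_{\beta+1}$ is a homology isomorphism; writing $X_\alpha := \varinjlim_{\beta < \alpha} X_\beta$, the transfinite composite in question is the canonical map $X_0 \to X_\alpha$.

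The main step is a transfinite induction on $\delta \le \alpha$ proving that for every $\gamma \le \delta$ the canonical map $H_*(X_\gamma) \to H_*(X_\delta)$ is an isomorphism. The base case $\delta = 0$ is trivial. For a successor $\delta = \epsilon + 1$ and $\gamma \le \epsilon$, one factors $H_*(X_\gamma) \to H_*(X_\delta)$ as $H_*(X_\gamma) \to H_*(X_\epsilon) \to H_*(X_{\epsilon+1})$, where the first map is an isomorphism by the inductive hypothesis and the second because $X_\epsilon \to X_{\epsilon+1}$ is a weak equivalence (and the case $\gamma = \delta$ is the identity). For a limit ordinal $\delta$ (including $\delta = \alpha$), we have $X_\delta \cong \varinjlim_{\epsilon < \delta} X_\epsilon$ --- by continuity of $X$ if $\delta < \alpha$, and by definition of $X_\alpha$ if $\delta = \alpha$ --- and since $\delta$ is a nonempty filtered poset, \cref{homology-filtered-colim} identifies $H_*(X_\delta)$ with $\varinjlim_{\epsilon < \delta} H_*(X_\epsilon)$; by the inductive hypothesis every transition morphism of this filtered diagram of $R$-modules is an isomorphism, so its colimit is computed, up to canonical isomorphism, by the leg out of any of its objects, whence $H_*(X_\gamma) \to H_*(X_\delta)$ is an isomorphism for every $\gamma < \delta$ as well. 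Applying the statement with $\gamma = 0$ and $\delta = \alpha$ then shows that $X_0 \to X_\alpha$ is a homology isomorphism.

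I do not expect a substantive obstacle: the only real content is that path homology commutes with the filtered colimits appearing in transfinite composites, which is exactly \cref{homology-filtered-colim}, and the rest is routine ordinal bookkeeping together with the elementary fact that the colimit of a filtered diagram all of whose transition maps are invertible is realized by the leg out of any object. Combined with \cref{closed-under-transf-comp}, this also yields that transfinite composites of acyclic cofibrations are acyclic cofibrations, which is axiom (C7).
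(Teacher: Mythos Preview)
Your proof is correct and takes essentially the same approach as the paper: both deduce the result from \cref{homology-filtered-colim}, the fact that $H_*$ preserves filtered colimits. The paper's proof is a one-liner leaving the ordinal bookkeeping implicit, while you spell out the transfinite induction in full; the content is the same.
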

\begin{proof}
    By definition, weak equivalences are maps which become isomorphisms under the path homology functor $H_* \from \DiGraph \to \ModRN$.
    The result then follows from \cref{homology-filtered-colim}.
\end{proof}

\begin{proof}[Proof of \cref{cofib-cat}]
We consider each of the axioms of \cref{cofib-cat-def}.
\begin{itemize}
    \item[(C1)] The class of cofibrations contains all identity maps and is closed under composition by \cref{cof-wide-subcat}. 
    The analogous results for weak equivalences are immediate from the functoriality of path homology.
    \item[(C2)] The 2-out-of-6 property for weak equivalences is immediate from the corresponding property for isomorphisms and the functoriality of path homology.
    \item[(C3)] All objects of $\DiGraph$ are cofibrant by \cref{empty-cof}.
    \item[(C4)] The existence of pushouts of cofibrations is trivial, as $\DiGraph$ is cocomplete. 
    Stability of cofibrations under pushout is given by \cref{cof-pushout}.
    Given a pushout square 
    \[
      \begin{tikzcd}
        A \arrow[r] \arrow[d,tail] & B \arrow[d,tail] \\
        X \arrow[r] & Y \\
      \end{tikzcd}
    \]
    with $A \cto X$ an acyclic cofibration, we can view it as a morphism in $\Cof$.
    It follows by \cref{cof-pushout} that $B \cto Y$ is a cofibration as well.
    By \cref{tcof-rel-hom}, each relative homology group $H_n(X,A)$ is trivial; and by \cref{pushout-rel-hom}, so is each relative homology group $H_n(Y,B)$.
    Thus $B \cto Y$ is a weak equivalence by \cref{tcof-rel-hom}.
    \item[(C5)] Factorization of codiagonal maps is given by \cref{codiag-factor}.
    \item[(C6)] The existence of small coproducts is trivial, as $\DiGraph$ is cocomplete.
    \item[(C7)] The closure of (acyclic) cofibrations under transfinite composition follows from \cref{closed-under-transf-comp,weq-transf-comp}. \qedhere
\end{itemize}
\end{proof}

Our results also allow us to compare our cofibration category structure on $\DiGraph$ with the cofibration categories of chain complexes defined in \cref{Ch-cof-cat}.

\begin{theorem}\label{Omega-exact}
For any ring $R$, the functor $\Omega \colon \DiGraph \to \Ch_R$ factors through the full subcategory of chain complexes of projective $R$-modules, and is exact when considered as a functor from the cofibration category of \cref{cofib-cat} to either $\Ch^{\proj}_R$ or $\Ch_R$.
\end{theorem}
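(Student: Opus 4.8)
The plan is to verify that $\Omega$ satisfies each of the six defining properties of an exact functor, making essential use of the results of \cref{sec:excision}. The single new input needed is that $\Omega$ takes values in chain complexes of projective $R$-modules, i.e.\ that every $\Omega_n(Y;R)$ is projective (indeed free). By \cref{omega_kernel}, $\Omega_n(Y;R)$ is the kernel of a map $A_n(Y;R)\to C_{n-1}(Y;R)/A_{n-1}(Y;R)$ of free $R$-modules whose matrix in the path bases has entries in $\{0,\pm1\}$, and the same pullback description applies to the modules $\tOmegaOne_n(X,A;R)$ appearing in \cref{sec:excision}. I would isolate as a lemma the statement that such kernels are free $R$-modules: over $R=\mathbb Z$ this is automatic, since a subgroup of a free abelian group is free, and it should extend to general $R$ either by checking that the construction is compatible with the base change $-\otimes_{\mathbb Z}R$ (so that $\Omega_n(Y;R)\cong\Omega_n(Y;\mathbb Z)\otimes_{\mathbb Z}R$) or by producing bases directly from the combinatorics of allowed paths. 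I expect this freeness lemma to be the main obstacle, as everything else reduces cleanly to earlier results.

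Granting the lemma, the remaining clauses are short. Since $\Omega(\varnothing)=0$, $\Omega$ preserves the initial object; since each $I_n$ is connected, every allowed $n$-path into a coproduct factors through a single summand, which together with the analogous facts for $A_\bullet$ and $C_\bullet$ shows $\Omega$ preserves coproducts; and $\Omega$ preserves filtered colimits by \cref{Omega-filtered-colim}, hence (together with the cofibration-preservation established next) transfinite composites of cofibrations. For a cofibration $A\cto X$, \cref{Omega-U-sum} identifies the inclusion $\Omega_n(A)\hookrightarrow\Omega_n(X)$ with the inclusion of a direct summand whose complement is $\tOmega_n(X,A)$; thus $\Omega(A)\to\Omega(X)$ is a monomorphism with cokernel $\tOmega(X,A)$, and this cokernel is degreewise projective because $\tOmega(X,A)\cong M(X,A)$ by \cref{cone-omega-iso} while $M_n(X,A)=\tOmegaOne_{n-1}(X,A)\oplus\Omega_n(X-A)$ is free by the lemma. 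Hence $\Omega(A)\cto\Omega(X)$ is a cofibration in $\Ch^\proj_R$ (a monomorphism with projective cokernel) and in $\Ch^\inj_R$ (a monomorphism); if moreover $A\cto X$ is an $R$-homology isomorphism then $\Omega(A)\to\Omega(X)$ is a quasi-isomorphism by the very definition of weak equivalence, so it is an acyclic cofibration in both structures.

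It remains to check that $\Omega$ preserves pushouts along cofibrations. Given a morphism $f\colon(X,A)\to(X',A')$ of $\CofPO$, \cref{Q-T-core} tells us that $Q$ sends $f$ to an isomorphism, i.e.\ the induced map of factor complexes $\Omega(X)/\Omega(A)\to\Omega(X')/\Omega(A')$ is an isomorphism. Combining this with the monomorphisms $\Omega(A)\hookrightarrow\Omega(X)$ and $\Omega(A')\hookrightarrow\Omega(X')$ from the previous paragraph, the canonical comparison map $\Omega(X)\cup_{\Omega(A)}\Omega(A')\to\Omega(X')$ restricts to a monomorphism on the summand $\Omega(A')$ and induces an isomorphism on cokernels over $\Omega(A')$ (these cokernels being $\Omega(X)/\Omega(A)$ and $\Omega(X')/\Omega(A')$ respectively), hence is an isomorphism by the five lemma; thus $\Omega(f)$ is a pushout square in $\Ch_R$. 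Since finite colimits in $\Ch^\proj_R$ and $\Ch^\inj_R$ coincide with those in $\Ch_R$ and the vertical maps of $\Omega(f)$ are cofibrations, $\Omega(f)$ is a pushout along a cofibration in either cofibration category. With all six properties verified, $\Omega$ is exact as a functor into $\Ch^\proj_R$ and into $\Ch^\inj_R$.
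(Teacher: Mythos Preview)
Your approach matches the paper's closely: both verify the clauses of exactness one by one, drawing on \cref{Omega-U-sum}, \cref{Q-T-core}, and \cref{Omega-filtered-colim}. The execution differs only in minor ways: the paper argues projectivity of the cokernel directly from the inclusion $\tOmega_n(X,A)\subseteq C_n(X)$ rather than via your detour through $M(X,A)$ and \cref{cone-omega-iso}; it uses the two-pushout lemma where you use the five lemma; and it omits the explicit verification of coproduct preservation that you supply.

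You are right to single out freeness over a general ring $R$ as the crux. The paper dispatches it in one line---$\Omega_n(X)$ is free ``as a subgroup of the free abelian group $C_n(X)$''---which is valid only over a PID, so the paper does not in fact close this gap for arbitrary $R$ either. Note that your proposed base-change route is not automatic: the cokernel of $A_n(X;\mathbb{Z})\to C_{n-1}(X;\mathbb{Z})/A_{n-1}(X;\mathbb{Z})$ may carry torsion, in which case $\Omega_n(X;\mathbb{Z})\otimes_{\mathbb{Z}}R\to\Omega_n(X;R)$ need not be an isomorphism, so a direct combinatorial argument would be required.
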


\begin{proof}
To see that $\Omega$ factors through $\Ch^{\proj}_R$, we note that for any directed graph $X$, each abelian group $\Omega_n(X)$ is free, as a subgroup of the free abelian group $C_n(X)$.

Now we consider exactness of $\Omega$. It suffices to prove the statement for $\Ch^\proj_R$; the statement for $\Ch^\inj_R$ will then follow by \cref{Ch-inclusion-exact}. 

That $\Omega$ preserves weak equivalences is immediate, as the path homology isomorphisms of directed graphs are by definition the maps which $\Omega$ sends to quasi-isomorphisms.
To see that $\Omega$ preserves the initial object, we observe that $\Omega(\varnothing)$ is zero in each degree.

For $\Omega$ to preserve cofibrations means that it sends cofibrations of directed graphs to inclusions of chain complexes with degreewise projective cokernel. This follows from \cref{Omega-U} and the fact that for any cofibration $A \cto X$, the abelian group $\tOmega_n(X,A)$ is free as a subgroup of the free abelian group $C_n(X)$. That $\Omega$ preserves transfinite composites of cofibrations is immediate from \cref{Omega-filtered-colim}.

To see that $\Omega$ preserves pushouts of cofibrations, consider a pushout square $f \colon (X,A) \to (X',A')$. To show that $\Omega$ sends this diagram to a pushout, it suffices to show that each functor $\Omega_n$ for $n \geq 0$ sends it to a pushout of abelian groups. By \cref{Omega-U-sum,Q-T-core}, the image of this diagram under $\Omega_n$ is isomorphic to:
\[
\begin{tikzcd}
\Omega_n(A) \arrow[r] \arrow[d] & \Omega_n(A') \arrow[d] \\
\Omega_n(A) \oplus \tOmega_n(X) \arrow[r] & \Omega_n(A') \oplus \tOmega_n(X) \\
\end{tikzcd}
\]
(Note that this is isomorphic to, yet distinct from, the diagram appearing in the proof of \cref{Omega-U-sum}: in the first component of the bottom map we have replaced the isomorphism $\tOmega_n(X,A) \cong \tOmega_n(X',A')$ with the identity on $\tOmega_n(X,A)$.)

Now consider the following composite diagram:
\[
\begin{tikzcd}
0 \arrow[r] \arrow[d] & \Omega_n(G) \arrow[r] \arrow[d] & \Omega_n(G') \arrow[d] \\
\Omega_n^U(H) \arrow[r] & \Omega_n(G) \oplus \Omega_n^U(H) \arrow[r] & \Omega_n(G') \oplus \Omega_n^U(H) \\
\end{tikzcd}
\]
The left square and the composite rectangle are pushouts, as the direct sum is the coproduct in the category of abelian groups. It follows that the right square is a pushout by the two pushout lemma.
\end{proof}

It is natural to ask about the compatibility of our cofibrations with other classes of equivalences; for instance, we might ask whether they comprise part of a category structure whose weak equivalences are the homotopy equivalences of directed graphs. 
We next show that this is not the case. Preceding the proof, we recall the notions of homotopy and homotopy equivalence as in \cite{grigor'yan-lin-muranov-yau:homotopy}.

A \emph{line digraph} of size $n$ is any graph $I$ whose vertex set is $0$, $1$, \ldots, $n$ and such that for any $i = 0, 1, \ldots, n-1$, we have either $i \to i+1$ or $i+1 \to i$.
All digraphs $I_n$ of \cref{def:I_n} are examples of line digraphs, but line digraphs also include the graph $J$ used in the proof of \cref{codiag-factor} and, e.g.,
    \begin{figure}[H]
    \centering
    \begin{tikzpicture}[node distance=20pt]
        \node(0) {$\bullet$};
        \node(1) [right=of 0] {$\bullet$};
        \node(2) [right=of 1] {$\bullet$};

        \node[above=1pt of 0] {0};
        \node[above=1pt of 1] {1};
        \node[above=1pt of 2] {2};

        \draw[->] (0) to (1);
        \draw[->] (2) to (1);
    \end{tikzpicture}
    \end{figure}
Given digraph maps $f, g \colon X \to Y$, a \emph{homotopy} from $f$ to $g$, denoted $\alpha \colon f \sim g$, is a digraph map $\alpha \from X \gtimes I \to Y$ for some line digraph $I$ of size $n$ such that $\alpha(-, 0) = f$ and $\alpha(-, n) = g$.
A digraph map $f \from X \to Y$ is a \emph{homotopy equivalence} if there exists a map $g \from Y \to X$ and homotopies $\alpha \from gf \sim \id[X]$ and $\beta \from fg \sim \id[Y]$.

By \cref{rmk:tree_homology}, all line digraphs are homotopy equivalent to the point, \ie{} contractible.
On the other hand, the cycle graphs of different length are not homotopy equivalent.
Indeed, any map from a cycle of smaller size to a cycle of larger size is homotopic to a constant, while the identity map from a cycle to itself is not.

\begin{proposition}
There is no cofibration category structure on $\DiGraph$ in which the class of cofibrations includes the maps of \cref{cofib-def} and whose weak equivalences are the homotopy equivalences.
\end{proposition}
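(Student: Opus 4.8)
The plan is to derive a contradiction from Left Properness (\cref{left-properness}): in any cofibration category the pushout of a weak equivalence along a cofibration is again a weak equivalence. Since the hypothetical structure is required to have every map of \cref{cofib-def} among its cofibrations, it suffices to produce a single cofibration $A \cto X$ of the form in \cref{cofib-def}, together with a homotopy equivalence $A \to B$, such that the induced pushout map $X \to X \cup_A B$ is \emph{not} a homotopy equivalence. I would take $A = I_1$, $B = I_0$, and $X$ a cycle graph, arranged so that collapsing $A$ to a point shortens the cycle.

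Concretely, I would work inside $X = C_{3,5}$ of \cref{def:Cmn} (any $C_{m,n}$ with $m,n$ large enough works), whose unique sink is the vertex $m = 3$, and let $A$ be the induced subgraph on the two vertices $\{2,3\}$, \ie{} the edge $2 \to 3$; this is a copy of $I_1$. First I would check that $A \cto C_{3,5}$ satisfies both conditions of \cref{cofib-def}: there are no edges out of $A$ (the only edge leaving $2$ lands in $3$, and $3$ is a sink), and $C_{3,5}$ admits a projecting decomposition with respect to $A$, namely the map sending every vertex on the short arc $0,1,2$ to $2$ and every vertex on the long arc $3,4,5,6,7$ to $3$. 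The verification of the projecting-decomposition condition is the only genuine computation, and it is where the choice of ambient cycle matters: a naive attempt such as collapsing an edge of $C_{3,1}$ fails because the wrap-around edge $0 \to 3$ gives the vertex $0$ two shortest routes into $A$ ending at different vertices, violating \cref{pi-closest}; choosing $m,n \geq 3$ and placing $A$ next to the sink removes this obstruction. Since $I_1$ is a line digraph, it is contractible by the discussion preceding this proposition (cf.~\cref{rmk:tree_homology}), so the unique map $A \to I_0$ is a homotopy equivalence.

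Next I would identify the pushout. By \cref{pushout-char}, the pushout of $A \to I_0$ along $A \cto C_{3,5}$ is the graph obtained from $C_{3,5}$ by contracting the edge $2 \to 3$, which is isomorphic to $C_{2,5}$. Left Properness would then force the pushout map $C_{3,5} \to C_{2,5}$ to be a weak equivalence, \ie{} a homotopy equivalence. To finish, I would observe that $C_{3,5}$ and $C_{2,5}$ are cycle graphs of different lengths, neither of which is the commuting triangle or the commuting square, so by \cref{rmk:homology_cycles} both have $H_1 \cong R \neq 0$ and in particular are not contractible; by the observation recalled just before the proposition (any map from a shorter cycle to a longer one is null-homotopic, while the identity on a non-contractible cycle is not), it follows that $C_{3,5}$ and $C_{2,5}$ are not homotopy equivalent. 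Hence no map $C_{3,5} \to C_{2,5}$ can be a homotopy equivalence, contradicting the previous paragraph.

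The main obstacle, as indicated above, is purely in the choice of the test cofibration: one must exhibit an $A \cto X$ that is a \emph{bona fide} cofibration in the strong sense of \cref{cofib-def} (both no-outgoing-edges \emph{and} a projecting decomposition) whose collapse nonetheless changes the homotopy type. The strength of the cofibration axioms --- exactly what makes the homology version of left properness hold --- is what rules out the easy candidates and forces one to pass to a cycle like $C_{3,5}$. Everything else is a short check using \cref{pushout-char} and the facts about homotopy equivalences of cycle graphs recalled before the statement.
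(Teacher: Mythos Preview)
Your argument is correct and follows exactly the same strategy as the paper: exhibit a cofibration (in the sense of \cref{cofib-def}) into a cycle together with a homotopy equivalence whose pushout shrinks the cycle, then invoke Left Properness and the fact that cycles of different lengths are not homotopy equivalent. The only difference is the test case: the paper takes $A$ to be the span $b \leftarrow a \rightarrow c$ inside the alternating cycle $\tilde C_6$ and collapses it to a point to obtain $\tilde C_4$, whereas you collapse the edge $2 \to 3$ inside $C_{3,5}$ to obtain $C_{2,5}$; both verifications of the projecting decomposition go through, and both conclusions follow from the same observation about maps between cycles.
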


\begin{proof}
Consider the following pushout of digraphs:
\begin{figure}[H]
    \centering
\begin{tikzpicture}[node distance = 40pt, scale=0.7]
    \node(a) [red] {$\bullet$};
    \node(b) [above right=of a, red] {$\bullet$};
    \node(x) [right=of b] {$\bullet$};
    \node(z) [below right = of x]{$\bullet$};
    \node(y) [below left = of z]{$\bullet$};
    \node(c) [left = of y, red]{$\bullet$};
    
    \node[left=1pt of a, red] {$a$};
    \node[left=1pt of b, red] {$b$};
    \node[right=1pt of x] {$x$};  
    \node[right=1pt of z] {$z$};
    \node[right=1pt of y] {$y$};
    \node[left=1pt of c, red] {$c$};  
    
    \draw[->, red] (a) to (b);
    \draw[->, red] (a) to (c);
    \draw[->] (x) to (b);
    \draw[->] (y) to (c);
    \draw[->] (x) to (z);
    \draw[->] (y) to (z);        
    
    
    \node(a2) [left=160pt of a,red] {$\bullet$};
    \node(b2) [above right=of a2, red] {$\bullet$};
    \node(c2) [below right= of a2, red]{$\bullet$};
    
    \node[left=1pt of a2, red] {$a$};
    \node[left=1pt of b2, red] {$b$};
    \node[left=1pt of c2, red] {$c$};  
    
    \draw[->, red] (a2) to (b2);
    \draw[->, red] (a2) to (c2);            
    
    
    \node(x2) [below=110pt of y] {$\bullet$};
    \node(z2) [below right = of x2]{$\bullet$};
    \node(y2) [below left = of z2]{$\bullet$};
    \node(abc) [left =80pt of z2, red]{$\bullet$};
    
    \node[right=1pt of x2] {$x$};  
    \node[right=1pt of z2] {$z$};
    \node[right=1pt of y2] {$y$};
    \node[left=1pt of abc, red] {$a'$};
    
    \draw[->] (x2) to (z2);
    \draw[->] (y2) to (z2);        
    \draw[->] (x2) to (abc);
    \draw[->] (y2) to (abc);
    
    
    \node(Dot) [left=160pt of abc, red] {$\bullet$};
    \node(Aright) [right=6em of a2] {};
    \node(C6left) [left=2em of a]{};
    \draw[->] (Aright) to (C6left);
    
    \node(Adown) [below=2em of c2] {};
    \node(Dotup) [above=2em of Dot] {};
    \draw[->] (Adown) to (Dotup);
    
    \node(C6down) [below=2em of y] {};
    \node(C4up) [above=2em of x2]{};
    \draw[->] (C6down) to (C4up);
    
    \node(Dotright) [right=2em of Dot] {};
    \node(C4left) [left=2em of abc]{};
    \draw[->] (Dotright) to (C4left);
    
    \node [above=of abc] {$\ulcorner$}; 
\end{tikzpicture}
\end{figure}
Note that the left vertical map is a homotopy equivalence and the top horizontal map is a cofibration.
If the proposed cofibration category structure were to exist, the right vertical map would be a homotopy equivalence by left properness (\cref{left-properness}).
However, since it is a map between cycles of different sizes, it cannot be a homotopy equivalence.
Hence, the proposed cofibration category structure does not exist.
\end{proof}

\bibliographystyle{amsalphaurlmod}
\bibliography{general-bibliography.bib}

\end{document}